\newcommand{\cal}[1]{\mathcal{#1}}
\newcommand\ep{{\mathsf{d}}}
\newcommand\ups{{\upsilon}}
\newcommand\1{{\bf 1}}
\newcommand\nn{\nonumber}
 \newcommand\sB {{\cal B}} 
\newcommand\sD {{\cal D}}
\newcommand\sE {{\cal E}} 
\newcommand\sF {{\cal F}}
 \newcommand\sH {{\cal H}} 
\newcommand\sL {{\cal L}}
 \newcommand\sU {{\cal U}}
 \newcommand\Z {{\mathbb Z}}
\newcommand\E {{\mathbb E}} 
\newcommand\U{{\mathrm{U}}}
\newcommand \VRDo {${\mathrm {VRD}}_{R_0'}(U)$}
\newcommand \VRDi {${\mathrm {VRD}}^{R_\infty'}(\ups)$}
\newcommand \Taili {$\mathrm{Tail}^{R_\infty}( \varphi,\ups)$}
\newcommand \Ei {$\mathrm{E}^{R_\infty}(\varphi,\ups)$}
\newcommand \NDLi {${\mathrm{NDL}}^{R_\infty}(\varphi,\ups)$}
\newcommand \Tailil {$\mathrm{Tail}^{R_\infty}(\varphi,\ups,\le)$}
\newcommand \Tailig {$\mathrm{Tail}^{R_\infty}(\varphi,\ups,\ge)$}
\newcommand\la{{\langle}}
\newcommand\ra{{\rangle}}
\newcommand\FF{{\mathcal F}}
\newcommand\EE{{\mathcal E}}
\newcommand\N{{\mathbb N}}
\newcommand\bL{{\mathbb L}}
\newcommand\eps{\varepsilon}
\newcommand\wt{\widetilde}
\theoremstyle{plain}
\newtheorem{thm}{Theorem}[section]
\newtheorem{lem}[thm]{Lemma}
\newtheorem{cor}[thm]{Corollary}
\newtheorem{remark}[thm]{Remark}
\newtheorem{prop}[thm]{Proposition}
\newtheorem{defn}[thm]{Definition}
\newtheorem{definition}[thm]{Definition}
\newtheorem{example}[thm]{Example}
\theoremstyle{definition}
\newtheorem*{eg*}{Example}
\newtheorem*{egs*}{Examples}
\newtheorem*{def*}{Definition}
\begin{document}
	\title[General Law of iterated logarithm: Liminf laws]{
	General Law of iterated logarithm for Markov processes: Liminf laws}	
	
	\author{Soobin Cho, Panki Kim and  Jaehun Lee}\thanks{This work was supported by the National Research Foundation of Korea(NRF) grant funded by the Korea government(MSIP) 
(No. NRF-2021R1A4A1027378).
}

	\address[Cho]{Department of Mathematical Sciences,
		Seoul National University,
		Seoul 08826, Republic of Korea}
	\curraddr{}
	\email{soobin15@snu.ac.kr}
	
	\address[Kim]{Department of Mathematical Sciences and Research Institute of Mathematics,
		Seoul National University,
		Seoul 08826, Republic of Korea}

	\curraddr{}
	\email{pkim@snu.ac.kr}
	
	\address[Lee]{
	Korea Institute for Advanced Study,  
	Seoul 02455,
	Republic of Korea}

	\curraddr{}
	\email{hun618@kias.re.kr}


	\maketitle
		
\begin{abstract}

Continuing from \cite{CKL}, in this paper, we discuss general criteria and forms of liminf  laws of iterated logarithm (LIL) for continuous-time Markov processes. Under some minimal assumptions, which are weaker than those in \cite{CKL}, we establish  liminf LIL at zero (at infinity, respectively) in general metric measure spaces. In particular, our assumptions for   liminf  law of LIL at zero and the form of liminf LIL are truly local so that we can cover highly space-inhomogenous cases. Our results cover  all examples in \cite{CKL} including random conductance models with long range jumps. Moreover, we show that the general form of  liminf  law of LIL at zero holds for a large class of jump processes whose jumping measures have logarithmic tails and Feller processes with symbols of varying order which are not covered before.

\noindent
\textbf{Keywords:} liminf law; jump processes;  law of the iterated logarithm; sample path; 
\medskip

\noindent \textbf{MSC 2020:}
60J25;  60F15; 60J35; 60J76;  60F20.

\end{abstract}
\allowdisplaybreaks

\section{Introduction and general result}\label{s:intro}

Let $Y:=(Y_t)_{t\ge0}$ be a nontrivial strictly $\beta$-stable process on ${\mathbb R}^d$ with $0<\beta \le 2$, in the sense of \cite[Definition 13.1]{Sa13}. 
Assume that none of the  one dimensional projections of  $Y$ is a subordinator, and $Y$ has no drift when $\beta=1$ (namely, $\tau=0$ in \cite[(14.16)]{Sa13}). Then $Y$  satisfies the following Chung-type  liminf LIL: There exists a constants $C \in (0,\infty)$ such that 
\begin{align}
\label{e:ieq0}
\liminf_{t \to 0 \;\; (\text{resp. }  t\to \infty) } \frac{\sup_{0<s\le t} |Y_s|}{(t/\log|\log t|)^{1/\beta}}
=C \;\; \mbox{ a.s.} 
\end{align}
See, e.g. \cite[Chapters 47, 48]{Sa13}.
 
The liminf LIL \eqref{e:ieq0} was  established for random walks on $\Z$ by Chung \cite{Ch48} under  the  assumption that  their i.i.d. increments have a finite third moment and expectation zero. 
The liminf LIL in \cite{Ch48} 
 was improved to a finite second moment assumption by Jain and Pruitt \cite{JP75}. 
For some related results, we refer to  \cite{Wi74, EM94, Ke97}. 
Chung also showed the large time result of \eqref{e:ieq0} for a Brownian motion in ${\mathbb R}$. The liminf LIL has been  extended to non-Cauchy $\beta$-stable processes on ${\mathbb R}^d$ with $\beta<d$ by  Taylor \cite{T}, increasing random walks and subordinators by Fristedt and Pruitt \cite{FP71}, and symmetric L\'evy processes in ${\mathbb R}$ by Dupuis \cite{Du74}. Then Wee \cite{Wee88} succeeded in obtain liminf LILs for numerous non-symmetric
L\'evy processes in ${\mathbb R}$. See also \cite{ADS13, BM} and the references therein. Recently, Knopova and Schilling \cite{KS} extended liminf LIL  at zero to non-symmetric L\'evy-type  processes in ${\mathbb R}$. Also, very recently, the second named author, jointly with Kumagai and Wang  \cite{KKW17} extended  liminf LIL for symmetric mixed stable-like Feller processes on metric measure spaces.	

The purpose of this paper is to understand asymptotic behaviors of a given Markov process by establishing liminf  law  of iterated logarithms for both near zero and near infinity under some minimal assumptions. In particular, we introduce new but general version of it. See Theorem \ref{inf} below. 
Our assumptions are  weak enough so that our results cover a lot of Markov processes including jump processes with diffusion part,  jump processes with small jumps of slowly varying intensity, 
some  non-symmetric processes,  processes with singular jumping kernels and random conductance models with long range jumps. See the examples in Sections \ref{s:Feller}-\ref{s:hunt}, and the references therein.  In particular,  the class of Markov processes considered in this paper extends the results of \cite{KKW17}. 
Moreover, metric measure spaces in this paper can be random, disconnected and highly space-inhomogeneous (see Definition \ref{d:VD}).

\medskip

Throughout  this section,  Section \ref{s:proof} and Appendix \ref{s:A}, 
 we assume that $(M,d,\mu)$ is a locally compact separable metric measure space where $\mu$ is a positive Radon measure on $M$ with full support. 
We add a cemetery point $\partial$ to $M$ and denote $M_\partial=M \cup \{\partial\}$.   We consider  a  Borel standard Markov process $X = (\Omega, \sF_t, X_t, \theta_t, t \ge 0;  {\mathbb P}^x, x \in M_\partial)$  on $M_\partial$ with the lifetime $\zeta:=\inf\{t>0:X_t = \partial\}$. Here $(\theta_t)_{t\ge0}$ is the shift operator with respect to  $X$ which is defined as 
$X_s ( \theta_t \, \omega) = X_{s+t}(\omega)$
 for all $t,s \ge 0$. 
Since $X$ is a Borel standard process, $X$  has a L\'evy system in the sense of  \cite[Theorem 1.1]{BJ73}. In  this paper, we always assume that $X$ admits a   L\'evy system of the form $(J(x,\cdot),ds)$ so that 
for any  $z \in M$, $t>0$ and nonnegative Borel function $F$ on $M \times M_\partial$ vanishing on the diagonal,
\begin{equation*}
	\E^z \bigg[ \,\sum_{s \le t} F(X_{s-}, X_s)\,  \bigg] = \E^z \left[ \int_0^t \int_{M_\partial} F(X_s,y)J(X_s,dy)ds \right].
\end{equation*}
The measure $J(x,dy)$ on $M_\partial$ is called the \textit{L\'evy measure} of the process $X$. Here we emphasize that the killing term $J(x, \partial)$ is included in the L\'evy measure.

For $x \in M$ and $r\in(0,\infty]$, set $B(x,r):= \{ y \in M : d(x,y) <r  \}$  and  $V(x,r):=\mu(B(x,r))$  with a convention $B(x,\infty)=M$. For a subset $U \subset M$, we denote $\updelta_U(x)$ for the distance between $x$ and $M \setminus U$, namely, 
\begin{equation*}
	\updelta_U(x)=\inf \{ d(x,y) : y \in M \setminus U\}, \quad x \in M.
\end{equation*}
We fix a base point $o \in M$ and define
\begin{equation*}
	{\mathsf{d}} (x) = d(x,o) + 1, \quad x \in M.
\end{equation*}
Since ${\mathsf{d}} (x) \ge 1$, the map 
$\to$
${\mathsf{d}}(x)^\ups$
 is nondecreasing on $(0,\infty)$. 

For a Borel set $D\subset M$, we denote
$$\tau_D:= \inf \{ t>0 : X_t \in M_\partial \setminus D \}$$ for  the first exit time of $X$  from $D$.

We are now ready to introduce our assumptions. Our assumptions are given in terms of mean exit times, tails of L\'evy measures and survival probabilities on balls. Our assumptions are weaker than those in \cite{CKL}, see Lemmas \ref{l:NDL} and \ref{l:hkendl} in Appendix \ref{s:A}.

Here are our assumptions for liminf LIL at zero.  Let $U\subset M$ be an open subset of $M$.

\bigskip

\setlength{\leftskip}{5mm}
\textit{There exist constants $R_0>0$, $C_0\in(0,1)$, $C_1>1$, $C_i>0$, $2\le i\le 7$  such that for every $x \in U$ and  $0<r<R_0 \wedge (C_0\updelta_U(x))$,}

\begin{equation}\label{A1}\tag{A1}
C_1^{-1} \E^y[\tau_{B(y,r)}]\le  \E^x[\tau_{B(x,r)}] \le C_1 \E^y[\tau_{B(y,r)}]\quad \text{for all} \; y \in B(x,r),
\end{equation}\\[-6mm]
\begin{equation}\label{A2}\tag{A2}
\lim_{r \to 0} \E^x[\tau_{B(x,r)}] = 0, \quad  \E^x[\tau_{B(x,r)}] \le C_2 \E^x[\tau_{B(x,r/2)}],
\end{equation}\\[-6mm]
\begin{equation}\label{A3}\tag{A3}
 J(x,M_\partial \setminus B(x,r)) \le  \frac{C_3}{ \E^x[\tau_{B(x,r)}]},
\end{equation}\\[-5mm]
\begin{equation}\label{A4}\tag{A4}
C_4e^{-C_5 n} \le {\mathbb P}^x\big(\tau_{B(x,r)} \ge n \E^x[\tau_{B(x,r)}]\big) \le C_6e^{-C_7 n} \quad \text{for all} \; n \ge 1.
\end{equation}

\bigskip

\setlength{\leftskip}{0mm}

Next, we give assumptions for liminf LIL at infinity. 

\bigskip

\setlength{\leftskip}{5mm}
\textit{There exist constants $R_\infty\ge 1$, $\ups\in(0,1)$, $\ell,C_1>1$,  $C_i>0$, $2\le i\le 7$  such that for every $x \in M$ and  $r>R_\infty {\mathsf{d}}(x)^\ups$,}

\begin{equation}\label{B1}\tag{B1}
C_1^{-1}\E^o[\tau_{B(o,r)}] \le	\E^x[\tau_{B(x,r)}] \le C_1	 \E^o[\tau_{B(o,r)}],
\end{equation}\\[-6mm]
\begin{equation}\label{B2}\tag{B2}
2 \E^o[\tau_{B(o,s/\ell)}] \le 	\E^o[\tau_{B(o,s)}] \le C_2	\E^o[\tau_{B(o,s/2)}]  \quad \text{for all} \; s >R_\infty,
\end{equation}\\[-6mm]
\begin{equation}\label{B3}\tag{B3}
J(x,M_\partial \setminus B(x,r)) \le  \frac{C_3}{\E^x[\tau_{B(x,r)}]},
\end{equation}\\[-5mm]
\begin{equation}\label{B4}\tag{B4}
	C_4e^{-C_5 n} \le {\mathbb P}^x\big(\tau_{B(x,r)} \ge n\E^x[\tau_{B(x,r)}]\big) \le C_6e^{-C_7 n} \quad \text{for all} \; n \ge 1.
\end{equation}

\medskip

We recall the following figure from \cite[Figure 1]{CKL}  which shows ranges of $r$ in our conditions.
\begin{figure}[h!]\label{f:1}
	\includegraphics[width=0.36\columnwidth]{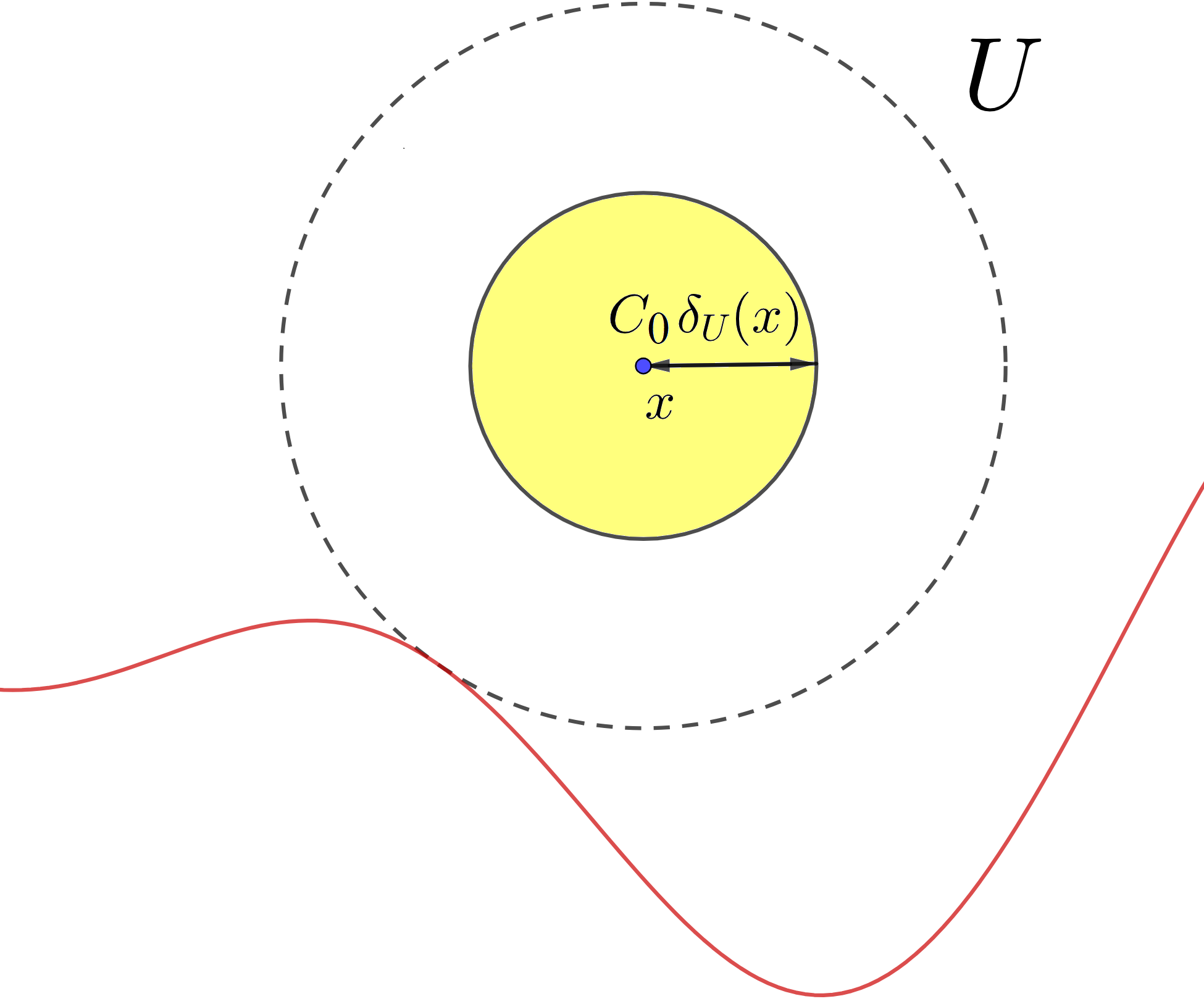}  \hspace{20mm}	\includegraphics[width=0.32
	\columnwidth]{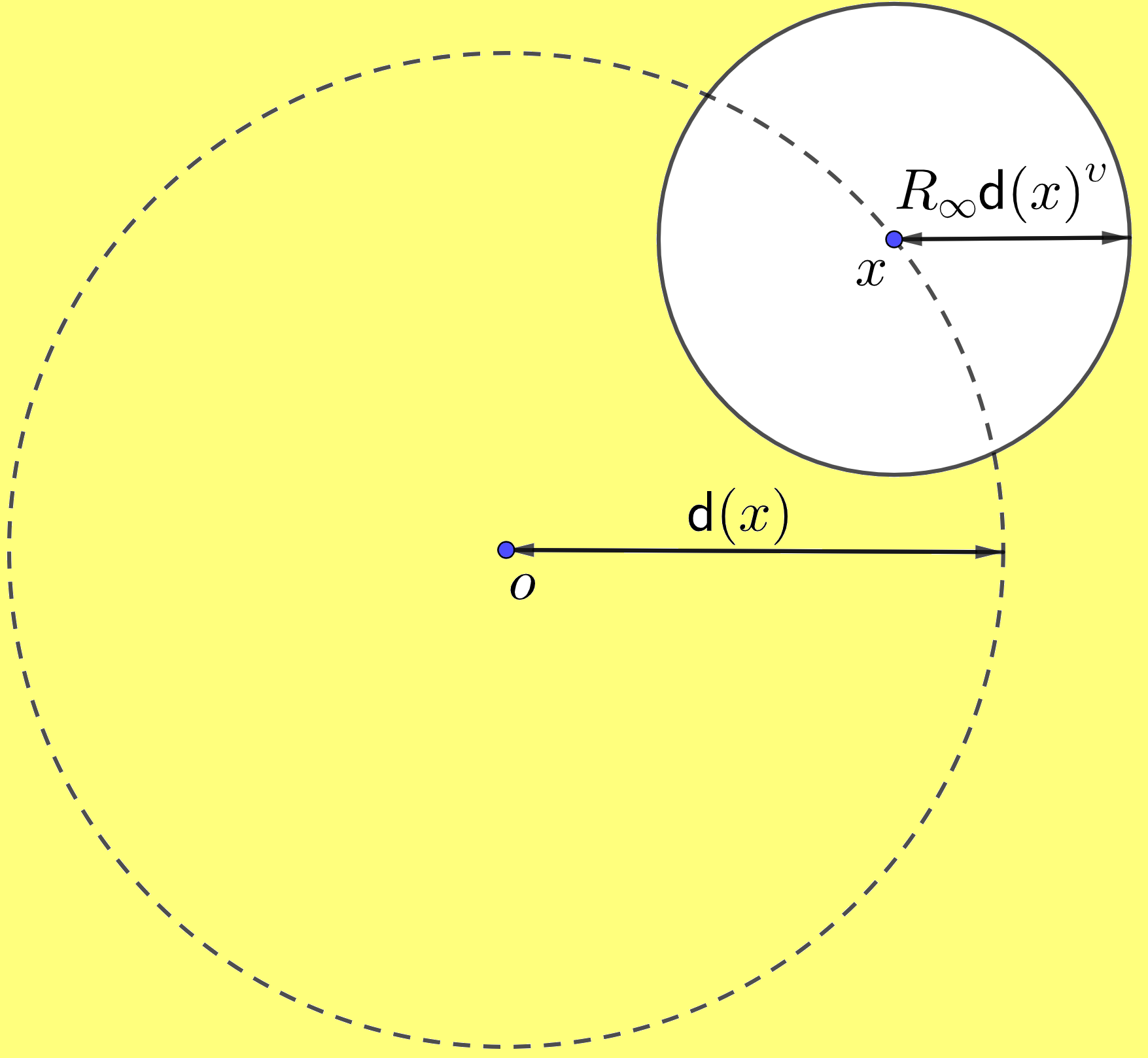} \vspace{-2mm}
	\caption{Range of $r$ in local conditions}
\end{figure}

\setlength{\leftskip}{0mm}

\begin{remark}\label{r:basic}
	{\rm (i) Note that, like \cite{CKL},  we impose conditions at infinity \eqref{B1}-\eqref{B4}  only for $r>R_\infty {\mathsf{d}}(x)^\ups$. By considering such weak assumptions at infinity, our LILs cover  some random conductance models. See Section \ref{s:RCM} below.
	
	\noindent (ii) The assumptions \eqref{A3} and \eqref{B3} are quite mild and  natural. 
	 Let $x \in U$. 
	 	Suppose  \eqref{A2} holds for $U$ and for $0<r<R_0 \wedge (C_0\updelta_U(x))$
	$$J(y,M_\partial \setminus B(x,r)) \ge c J(x,M_\partial \setminus B(x,r)) \quad \text{ for all } y \in B(x, r/2).$$
	Then, by the L\'evy system, we have that for  $0<r<R_0 \wedge (C_0\updelta_U(x))$
\begin{align*}
1 &\ge {\mathbb P}^x(X_{\tau_{B(x,r/2)}} \in M_\partial \setminus B(x,r) ) = \E^x \left[\int_0^{\tau_{B(x,r/2)}} \int_{M_\partial \setminus B(x,r)} J(X_s,dy) ds \right] \\  &\ge
c \,\E^x[\tau_{B(x,r/2)}] J(x,M_\partial \setminus B(x,r)) \ge c \, C_2^{-1} \E^x[\tau_{B(x,r)}] J(x,M_\partial \setminus B(x,r)).
\end{align*}

	\noindent (iii) The lower inequality in \eqref{B2} implies that $\lim_{r \to \infty} \E^o[\tau_{B(o,r)}] = \infty$. Hence, under \eqref{B1} and \eqref{B2}, we have $\lim_{r \to \infty} \E^x[\tau_{B(x,r)}] = \infty$ for all $x \in M$.

	\noindent (iv) We need the lower inequality in \eqref{B2} because the  space $M$ may be highly inhomogeneous. This inequality  implies that the map $r \mapsto \E^o[\tau_{B(x,r)}]$ grows at least polynomially for large values of $r$. Hence under this condition, with a high probability,  $X_t$ is not located in regions extremely far away from the base point $o$ in view of the time $t$ where we do not impose any conditions.

}
\end{remark}

From now on, {\it whenever condition \eqref{A1}-\eqref{A3} are assumed with $R_0>0$  for an open set $U$, we let $\phi(x,r)$ be 
any increasing function 
defined on $U \times (0,R_0)$ which satisfies the following condition: There is a  constant $C \ge 1$ such that for all $x \in U$ and $0<r<R_0 \wedge (C_0\updelta_U(x))$,
\begin{equation}\label{e:phi}
 C^{-1}\E^x[\tau_{B(x,r)}] \le \phi(x,r) \le C\E^x[\tau_{B(x,r)}].
\end{equation} }
Also, {\it whenever condition \eqref{B1}-\eqref{B3} are assumed with $R_\infty \ge 1$ and $\ups \in (0,1)$, we let $\phi(r)$ be any function on $M \times (0,\infty)$ satisfying \eqref{e:phi} for all $x \in M$ and $r>R_\infty {\mathsf{d}}(x)^\ups$, with $\phi(r)$ instead of $\phi(x,r)$. }

Then by condition \eqref{A2}, we see that there exist constants $\beta_2>0$ and $C_U\ge 1$ such that
\begin{equation}\label{e:scaling-zero}
	\frac{\phi(x,r)}{\phi(x,s)} \le C_U \bigg(\frac{r}{s}\bigg)^{\beta_2} \quad \text{for all} \; x \in U, \; 0<s\le r<R_0 \wedge (C_0\updelta_U(x)),
\end{equation}
and by \eqref{B2},  there exist constants $\beta_1,\beta_2>0$ and $C_U\ge 1 \ge C_L >0$ such that
\begin{equation}\label{e:scaling-infty}
C_L \bigg(\frac{r}{s}\bigg)^{\beta_1} \le 	\frac{\phi(r)}{\phi(s)} \le C_U \bigg(\frac{r}{s}\bigg)^{\beta_2} \quad \text{for all} \;  r \ge s> R_\infty {\mathsf{d}}(x)^\ups.
\end{equation}

Now, we give our results in full generality. Our first result is the liminf  law of LIL at zero. Note that we don't put any extra assumptions on our metric measure space such as volume doubling property.

\begin{thm}\label{inf}
Suppose that \eqref{A1}-\eqref{A4} hold for an open subset $U \subset M$. Then, there are constants $a_2 \ge a_1>0$ such that for all $x \in U$, there exists a constant $a_x \in [a_1,a_2]$ satisfying
	\begin{equation}\label{1}
	\liminf_{t\to0} \frac{\phi\big(x,\sup_{0<s\le t} d(x, X_s)\big)}{t/\log|\log t|}=a_x,\qquad
	{\mathbb P}^x\mbox{-a.s.}
	\end{equation}
\end{thm}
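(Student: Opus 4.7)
The plan is to carry out a Chung-style Borel--Cantelli argument along suitably chosen subsequences $t_n\downarrow 0$, after the standard rewriting $\{M_t<r\}=\{\tau_{B(x,r)}>t\}$ with $M_t:=\sup_{0<s\le t}d(x,X_s)$. The problem thus reduces to sharp tail estimates for exit times from balls. My first step is to upgrade \eqref{A4} to a form uniform in the starting point: using the inclusions $B(y,r/2)\subset B(x,r)\subset B(y,3r/2)$ for $y\in B(x,r/2)$ together with \eqref{A1}, \eqref{A2}, and \eqref{A4}, one obtains constants $c_1\ge c_2>0$ and $C>0$, depending only on those in \eqref{A1}--\eqref{A4}, such that for $y\in B(x,r/2)$, $r<R_0\wedge(C_0\updelta_U(x))$ small enough, and $t\ge c\phi(x,r)$,
\begin{equation*}
	C^{-1}e^{-c_1 t/\phi(x,r)}\,\le\,{\mathbb P}^y(\tau_{B(x,r)}>t)\,\le\,C e^{-c_2 t/\phi(x,r)};
\end{equation*}
the near-diagonal lower-bound style ingredients available from Appendix \ref{s:A} (Lemmas \ref{l:NDL}, \ref{l:hkendl}) should furnish what is needed.

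Blumenthal's 0-1 law applies because $X$ is Borel standard and, for each $\lambda$, $\{\liminf_{t\to 0}\phi(x,M_t)\log|\log t|/t\le \lambda\}$ is $\sF_{0+}$-measurable; hence the liminf is a.s.\ equal to a deterministic value $a_x\in[0,\infty]$, and it suffices to produce constants $0<a_1\le a_2<\infty$, depending only on \eqref{A1}--\eqref{A4}, with $a_1\le a_x\le a_2$. For the lower bound, set $t_n=\lambda^n$ with $\lambda\in(0,1)$ and $r_n(a):=\phi(x,\cdot)^{-1}(a\,t_n/\log|\log t_n|)$. By the upper tail above,
\begin{equation*}
	{\mathbb P}^x(M_{t_n}\le r_n(a))\,\le\,C(n\log(1/\lambda))^{-c_2/a},
\end{equation*}
which is summable for $a<c_2$. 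The first Borel--Cantelli lemma gives $\phi(x,M_{t_n})> a\,t_n/\log|\log t_n|$ for all but finitely many $n$, a.s.; interpolating for $t\in[t_{n+1},t_n]$ via monotonicity of $t\mapsto M_t$ and of $t\mapsto t/\log|\log t|$ near $0$, then letting $\lambda\uparrow 1$ and $a\uparrow c_2$, yields $a_x\ge c_2=:a_1$.

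For the upper bound, work along the same sequence with $a>c_1$ and set $E_n:=\{M_{t_n}\le r_n(a)\}$; the lower tail gives $\sum_n{\mathbb P}^x(E_n)\ge c\sum_n n^{-c_1/a}=\infty$. The $E_n$ are not independent, but the strong Markov property at $t_n$ combined with the upper tail estimate yields the quasi-independence inequality ${\mathbb P}^x(E_m\cap E_n)\le C'\,{\mathbb P}^x(E_m)\,{\mathbb P}^x(E_n)$ whenever $n-m\ge n_0$, where $n_0$ is chosen (via the polynomial scaling \eqref{e:scaling-zero}) so that $r_n\le r_m/2$, ensuring $X_{t_n}\in B(x,r_n)\subset B(x,r_m/2)$ on $E_n$ so that the transferred tail bound applies. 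The Kochen--Stone lemma then gives ${\mathbb P}^x(\limsup E_n)>0$, which is boosted to $1$ by Blumenthal's 0-1 law; along a subsequence with $E_n$ occurring, $\phi(x,M_{t_n})\le a\,t_n/\log|\log t_n|$, so $a_x\le c_1=:a_2$.

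The main obstacle is the uniform exit-time tail estimate in the first step: \eqref{A4} only controls the tail when the starting point coincides with the ball's center, and one must combine the ball inclusions with \eqref{A1}, \eqref{A2} (and, if needed, an iteration of strong Markov as in the appendix) to transfer to a bound uniform over $y\in B(x,r/2)$ with constants depending only on those in the hypotheses. The remaining two-direction Borel--Cantelli/Kochen--Stone scheme followed by Blumenthal's 0-1 law is routine modulo the bookkeeping of $\log|\log t_n|$ along $t_n=\lambda^n$.
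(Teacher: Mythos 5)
Your lower-bound direction (establishing $a_x \ge a_1$ via the first Borel--Cantelli lemma along a geometric subsequence $t_n=\lambda^n$) is sound and essentially matches the paper's step showing $\limsup_{r\to 0}\tau_{B(x,r)}/[\phi(x,r)\log|\log\phi(x,r)|]\le q_2$; and the uniform exit-time tail bound you want for $y\in B(x,r/2)$ does follow from the ball inclusions together with \eqref{A1}, \eqref{A2}, \eqref{A4}, so that step is fine (though your appeal to Lemmas \ref{l:NDL} and \ref{l:hkendl} is off target, since those rely on NDL-type heat-kernel lower bounds that are \emph{not} among the hypotheses of Theorem \ref{inf}).

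The gap is in the upper-bound direction, at the claimed quasi-independence inequality ${\mathbb P}^x(E_m\cap E_n)\le C'{\mathbb P}^x(E_m){\mathbb P}^x(E_n)$. Conditioning at $t_n$ and applying the transferred upper tail only gives
\begin{equation*}
{\mathbb P}^x(E_m\cap E_n)\le {\mathbb P}^x(E_n)\,\sup_{y\in B(x,r_m(a)/2)}{\mathbb P}^y\big(\tau_{B(x,r_m(a))}>t_m-t_n\big)\le {\mathbb P}^x(E_n)\,C\,e^{-c_2(t_m-t_n)/\phi(x,r_m(a))},
\end{equation*}
while the transferred lower tail gives ${\mathbb P}^x(E_m)\ge C^{-1}e^{-c_1 t_m/\phi(x,r_m(a))}$. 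The ratio ${\mathbb P}^x(E_m\cap E_n)/[{\mathbb P}^x(E_m){\mathbb P}^x(E_n)]$ is therefore only bounded by $C^2\exp\big([\,c_1t_m-c_2(t_m-t_n)\,]/\phi(x,r_m(a))\big)$. Since \eqref{A4} forces $c_1\ge c_2$ (i.e.\ $C_5\ge C_7$), this exceeds $C^2\exp\big((c_1-c_2)t_m/\phi(x,r_m(a))\big)=C^2|\log t_m|^{(c_1-c_2)/a}$, which diverges unless \eqref{A4} happens to hold with a single matched exponent. So Kochen--Stone cannot be applied as you propose.

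The paper sidesteps the second Borel--Cantelli entirely. It fixes $r_k=\phi^{-1}(x,e^{-k^2})$ and $u_k\asymp\phi(x,r_k)\log|\log\phi(x,r_k)|$, sets $G_k=\{\sup_{0<s\le u_k}d(x,X_s)\ge 2r_k\}$, and proves $\sum_n{\mathbb P}^x\big(\cap_{k=n}^{2n}G_k\big)<\infty$ by splitting $G_k\subset E_k\cup F_k$: the event $E_k$ (large displacement in the much shorter time $u_{k+1}\ll\phi(x,r_k)$) is controlled by the \emph{polynomial} small-time tail bound of Proposition \ref{p:EP}(i) — a key ingredient your proposal never invokes and which does not follow from your exponential tail estimate — and $\cap_{k}(F_k\setminus E_k)$ is controlled by iterating the Markov property with the lower bound of \eqref{A4}. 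The first Borel--Cantelli then yields, for each large $n$, some $k\in[n,2n]$ with $\tau_{B(x,2r_k)}>u_k$, giving $\limsup_k\tau_{B(x,2r_k)}/u_k\ge 1$. You would need either this kind of decoupling device or a genuinely two-sided tail estimate with matching exponents to close the upper-bound direction.
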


Note that in Theorem \ref{inf}, $r \mapsto \phi(x,r)$ for $x \in U$ can be slowly varying at zero so that we can cover jump processes whose jumping measures have logarithmic tails. 
When $\phi(x,r)$ is a mixed polynomial type near zero (i.e., both \eqref{e:scaling-zero} and \eqref{e:scaling-zero-lower} hold), we recover the classical form of the liminf LIL at zero.

We denote  $\phi^{-1}(x,t):=\sup\{r>0:\phi(x,r)\le t\}$ for the right continuous inverse of $\phi(x,\cdot)$.

\begin{cor}\label{c:inf}
Suppose that \eqref{A1}-\eqref{A4} hold for an open subset $U \subset M$ and there exist constants $\beta_1,C_L>0$ such that 
\begin{equation}\label{e:scaling-zero-lower}
	\frac{\phi(x,r)}{\phi(x,s)} \ge C_L \bigg(\frac{r}{s}\bigg)^{\beta_1} \quad \text{for all} \; x \in U, \; 0<s\le r<R_0 \wedge (C_0\updelta_U(x)).
\end{equation}
 Then, there are constants $\wt a_2 \ge \wt a_1>0$  such that for all $x \in U$, there exists a constant $\wt a_x \in [\wt a_1,\wt a_2]$ satisfying
	\begin{equation}\label{c1}
	\liminf_{t\to0} \frac{\sup_{0<s\le t} d(x, X_s)}{\phi^{-1}(x,t/\log|\log t|)}=\wt a_x,\qquad
	{\mathbb P}^x\mbox{-a.s.}
	\end{equation}
\end{cor}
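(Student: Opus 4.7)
The plan is to deduce \eqref{c1} from \eqref{1} of Theorem \ref{inf} by exploiting that, under the additional hypothesis \eqref{e:scaling-zero-lower}, the bilateral polynomial scaling of $r \mapsto \phi(x,r)$ propagates to its right-continuous inverse $s \mapsto \phi^{-1}(x,s)$. I would first invert \eqref{e:scaling-zero} and \eqref{e:scaling-zero-lower} to produce constants $0<\kappa_1\le\kappa_2<\infty$, depending only on $C_U,C_L,\beta_1,\beta_2$, such that for all $\lambda>0$, all $x\in U$, and all admissible $u$,
\begin{equation*}
\kappa_1 \min(\lambda^{1/\beta_1},\lambda^{1/\beta_2})\,\phi^{-1}(x,u)\le \phi^{-1}(x,\lambda u)\le \kappa_2 \max(\lambda^{1/\beta_1},\lambda^{1/\beta_2})\,\phi^{-1}(x,u).
\end{equation*}

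Next, set $R_t:=\sup_{0<s\le t}d(x,X_s)$ and $u_t:=t/\log|\log t|$. By Theorem \ref{inf}, there is a deterministic $a_x\in[a_1,a_2]$ such that $\liminf_{t\to 0}\phi(x,R_t)/u_t=a_x$ holds ${\mathbb P}^x$-a.s. For the upper bound in \eqref{c1}, I would choose a random sequence $t_n\downarrow 0$ along which $\phi(x,R_{t_n})/u_{t_n}\to a_x$; since $\phi(x,\cdot)$ is increasing, this gives $R_{t_n}\le \phi^{-1}(x,(a_x+1) u_{t_n})$ for $n$ large, and the scaling bound above then yields $R_{t_n}\le \wt a_2\,\phi^{-1}(x,u_{t_n})$ for some $\wt a_2$ depending only on $a_2,\beta_1,\beta_2$, and $\kappa_2$. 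For the lower bound, fixing any $\epsilon\in(0,a_1)$, ${\mathbb P}^x$-a.s.\ $\phi(x,R_t)\ge (a_1-\epsilon) u_t$ for all sufficiently small $t$, and the monotonicity of $\phi(x,\cdot)$ together with the lower half of the scaling bound yields $R_t\ge \wt a_1\,\phi^{-1}(x,u_t)$ for some $\wt a_1>0$ depending only on $a_1,\epsilon,\beta_2$, and $\kappa_1$.

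Combining the two bounds gives $\liminf_{t\to 0}R_t/\phi^{-1}(x,u_t)\in[\wt a_1,\wt a_2]$ ${\mathbb P}^x$-a.s. Since this $\liminf$ is $\sF_{0+}$-measurable, the Blumenthal $0$-$1$ law (available because $X$ is a Borel standard Markov process) forces it to equal ${\mathbb P}^x$-a.s.\ a deterministic constant, which I will call $\wt a_x$; by construction $\wt a_x\in[\wt a_1,\wt a_2]$, as required. The main obstacle is the inversion step itself: because $\phi(x,\cdot)$ need not be a pure power of $r$, \eqref{c1} is not obtained from \eqref{1} by an exact change of variables, and one cannot express $\wt a_x$ as an explicit function of $a_x$. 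The bilateral scaling of $\phi^{-1}$ provided by \eqref{e:scaling-zero-lower} is precisely what allows the $\phi$-form of the liminf to be traded for the $\phi^{-1}$-form up to absolute multiplicative constants, which is all that the statement of the corollary demands.
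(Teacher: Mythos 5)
Your proposal is correct and follows essentially the same route as the paper: invert the two-sided polynomial scaling of $\phi(x,\cdot)$ to get a two-sided scaling for $\phi^{-1}(x,\cdot)$, translate the $\phi$-form of the liminf into an interval bound for the $\phi^{-1}$-form, and finish with Blumenthal's zero–one law. The only cosmetic difference is that the paper deduces the interval bound from the intermediate estimate \eqref{e:inf3} (the almost-sure two-sided bound established inside the proof of Theorem \ref{inf}, before Blumenthal is applied), whereas you start from the already-deterministic conclusion \eqref{1}; both lead to the same argument, and your version makes the scaling inversion for $\phi^{-1}$ more explicit, which is a helpful clarification.
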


\bigskip
Our second result is the liminf  law of LIL at infinity.  
\begin{thm}\label{t:infinf}
	Suppose that \eqref{B1}-\eqref{B4} hold. Then, there are constants $b_2 \ge b_1>0$ such that
	\begin{equation}\label{20}
	\liminf_{t\to \infty} \frac{\sup_{0<s\le t} d(x, X_s)}{\phi^{-1}(t/\log\log t)} \in [b_1, b_2],\qquad
	{\mathbb P}^y\mbox{-a.s.}  \;\; \forall x,y \in M.
	\end{equation}
\end{thm}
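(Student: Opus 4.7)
My plan is to run a Chung-type Borel--Cantelli argument along a geometric subsequence $t_n := \lambda^n$ with $\lambda > 1$ large. Write $g(t) := \phi^{-1}(t/\log\log t)$ and $M_x(t) := \sup_{0 < s \le t} d(x,X_s)$. Since $M_x$ is non-decreasing and $g(\lambda t) \asymp g(t)$ by \eqref{e:scaling-infty}, a liminf bound along $\{t_n\}$ transfers, with constants depending only on $\lambda, \beta_1, \beta_2$, to the continuous-time liminf, and it suffices to work along the discrete sequence.

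For the lower bound $\liminf \ge b_1$, I would apply the first Borel--Cantelli lemma to $A_n := \{M_x(t_n) \le b_1 g(t_n)\}$. For $n$ so large that $b_1 g(t_n) > d(x,y)$, one has $A_n \subseteq \{\tau_{B(y,2b_1 g(t_n))} > t_n\}$, because if $X_s \in B(x,b_1 g(t_n))$ for all $s \le t_n$ then $X_s \in B(y, 2 b_1 g(t_n))$ as well. The upper tail in \eqref{B4}, the comparability \eqref{B1}, and the lower scaling of $\phi$ from \eqref{e:scaling-infty} then give
\begin{equation*}
\mathbb{P}^y(A_n) \le C \exp\!\bigl(-c\, b_1^{-\beta_1} \log\log t_n\bigr) = C(\log t_n)^{-c\, b_1^{-\beta_1}}.
\end{equation*}
Taking $b_1$ small makes this summable in $n$, which proves $\liminf \ge b_1$.

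For the upper bound $\liminf \le b_2$, the argument has two ingredients. First, an a priori limsup-type estimate $M_x(T_k) \le \Lambda\, \phi^{-1}(T_k \log\log T_k)$ almost surely for all large $k$, which follows either from the companion limsup LIL in \cite{CKL} or directly from a first Borel--Cantelli bound using the upper tail of \eqref{B4} on disjoint time blocks. Second, a conditional lower bound. Choose $T_k$ growing rapidly (e.g.\ $T_k = \exp(k^2)$) so that $\Lambda\,\phi^{-1}(T_{k-1}\log\log T_{k-1}) \le b_2 g(T_k)/2$ eventually, and set $E_k := \{X_s \in B(x, b_2 g(T_k)) \text{ for every } s \in [T_{k-1},T_k]\}$. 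On the good event $G_k$ provided by the a priori bound one has $X_{T_{k-1}} \in B(x, b_2 g(T_k)/2)$; applying the strong Markov property at $T_{k-1}$, the lower tail in \eqref{B4}, and \eqref{B1} to compare mean exit times across base points yields
\begin{equation*}
\mathbb{P}^y(E_k \mid \sF_{T_{k-1}}) \ge c\,(\log T_k)^{-C/b_2^{\beta_1}} \, \mathbf{1}_{G_k}.
\end{equation*}
Taking $b_2$ large makes the series $\sum_k c(\log T_k)^{-C/b_2^{\beta_1}}$ divergent, so the conditional Borel--Cantelli lemma forces $E_k$ infinitely often almost surely. Since $E_k \cap G_k \subseteq \{M_x(T_k) \le b_2 g(T_k)\}$, we conclude $\liminf \le b_2$.

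The main obstacle is that $M_x(t)$ is a functional of the entire trajectory on $[0,t]$, so the target events $\{M_x(T_k) \le b_2 g(T_k)\}$ are not $\sigma(X_s : s \in [T_{k-1},T_k])$-measurable and a naive application of the second Borel--Cantelli lemma fails. Handling this requires (i) absorbing the pre-$T_{k-1}$ part of the path into an a priori limsup bound --- which in turn forces the subsequence $T_k$ to grow at least doubly exponentially to beat the ratio $\phi^{-1}(T_{k-1}\log\log T_{k-1})/\phi^{-1}(T_k/\log\log T_k)$ --- and (ii) transferring exit-time estimates uniformly from the base point $o$ to the random position $X_{T_{k-1}}$, a step that relies crucially on \eqref{B1} and on the tail control \eqref{B3}. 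A minor technicality is that \eqref{B4} is only valid for radii $r > R_\infty \mathsf{d}(x)^\ups$, so $T_k$ must be chosen so that all ball radii appearing in the construction lie in this admissible range; the bound $\mathsf{d}(X_{T_{k-1}}) \le b_2 g(T_k)/2 + d(x,o) + 1$ together with $\ups < 1$ makes this automatic for large $k$.
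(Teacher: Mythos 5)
Your lower-bound argument ($\liminf \ge b_1$ via the first Borel--Cantelli lemma and the upper tail of \eqref{B4}) is correct and essentially parallel to what the paper does (the paper's reformulation as $\limsup_{r\to\infty}\tau_{B(y,r)}/(\phi(r)\log\log\phi(r)) \le q_4$ in \eqref{e:inf0''} is the same estimate).

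For the upper bound ($\liminf \le b_2$), however, the a priori-bound step has a genuine gap. You propose an estimate $M_x(T_{k-1}) \le \Lambda\,\phi^{-1}(T_{k-1}\log\log T_{k-1})$ a.s. eventually, claimed to follow from ``the upper tail of \eqref{B4}'' or from the limsup LIL of \cite{CKL}. Neither works here. The upper tail of \eqref{B4} bounds the \emph{survival} probability $\mathbb{P}(\tau_{B}\ge t)$, not the \emph{escape} probability $\mathbb{P}(\tau_{B}< t)$ that the a priori bound needs; \eqref{B4} alone gives no control of escape. The right tool is a one-sided exit estimate of the form $\mathbb{P}^y(\tau_{B(y,r)}\le t)\lesssim t/\phi(r)$, which is not automatic --- it requires \eqref{B3} plus a L\'evy-system/Meyer-decomposition argument, and is exactly what the paper's Proposition \ref{p:EP}(ii) provides. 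Moreover, even granting that estimate, your chosen scale is too tight: with $R_k=\Lambda\phi^{-1}(T_k\log\log T_k)$ one gets $\mathbb{P}^y(M_x(T_k)>R_k)\lesssim 1/(\Lambda^{\beta_1}\log\log T_k)\asymp 1/\log k$, which is \emph{not} summable, so the first Borel--Cantelli lemma does not close. (The limsup LIL of \cite{CKL} cannot be cited either, since it needs two-sided Tail and NDL assumptions that are not part of \eqref{B1}--\eqref{B4}.) The fix is to drop the ``LIL-sharp'' a priori bound entirely and estimate the good event $G_k:=\{M_x(T_{k-1})\le b_2 g(T_k)/2\}$ directly: using Proposition \ref{p:EP}(ii) and the doubly-exponential gap $T_{k-1}/T_k=e^{-(2k-1)}$, one finds $\mathbb{P}^y(G_k^c)\lesssim T_{k-1}\log\log T_k/T_k\lesssim e^{-2k}\log k$, which is summable. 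This is precisely what the paper does with the event $\wt E_n$ in its proof of Theorem \ref{t:infinf}: instead of a standalone limsup bound, the position control at the earlier time is folded into a Borel--Cantelli estimate at a coarse, $e^{-2n}$-decaying scale, and a block-of-indices device $\wt H_n=\cap_{k=n}^{2n}\wt G_k$ is used to boost the per-block probability $\exp(-ck^{-1/4})$ (coming from the lower tail of \eqref{B4}) to a summable $\exp(-cn^{3/4})$, replacing your conditional second Borel--Cantelli step. So the broad strategy is right, but as written the a priori bound both uses the wrong tool and lives at a scale where the probabilities fail to be summable.
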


The $\liminf$ in \eqref{20} may not be deterministic. In \cite{CKL}, we have obtained a zero-one law
 for shift-invariant events
 under volume of doubling  assumptions 
  and near diagonal lower estimates on heat kernel (see Proposition \ref{p:law01} below).
Using the same zero-one law, in this paper we also establish the  deterministic limit in liminf law.

 We recall following versions of volume doubling  property from authors' previous paper \cite{CKL}.

\begin{defn}\label{d:VD}
	{\rm (i) For an open set $U \subset M$ and $R_0' \in (0,\infty]$, we say that \textit{the  interior volume doubling and reverse doubling property} \VRDo \  holds  if there exist  constants $C_V \in (0,1)$, $d_2 \ge d_1>0$ and $C_\mu \ge c_\mu>0$ such that for all $x \in U$ and $0<s\le r<R_0' \wedge (C_V \updelta_U(x))$,
		\begin{equation}\label{e:VRD}
			c_\mu \left( \frac{r}{s} \right)^{d_1} \le \frac{V(x,r)}{V(x,s)} \le  C_\mu \left( \frac{r}{s} \right)^{d_2}.
		\end{equation}
		
		\noindent (ii) 	For  $R_\infty'\ge1$  and $\ups \in (0,1)$, we say that \textit{a weak volume doubling and reverse doubling property at infinity}  \VRDi \ holds  if there exist constants $d_2 \ge d_1>0$ and $C_\mu \ge c_\mu>0$   such that  \eqref{e:VRD} holds for all $x \in M$ and $r \ge s > R_\infty' {\mathsf{d}}(x)^\ups$.  
} \end{defn}

For an open set $D\subset M$, let  $X^D$ be the part process  of $X$ defined as  $X_t^D := X_t \, \1_{\{\tau_D > t\}} + \partial \, \1_{\{\tau_D \le t\}}$. Then $X_t$ is  a Borel standard process on $D$. See, e.g. \cite[Section 3.3]{CF}.
Let $(P^D_t)_{t \ge 0}$ be the semigroup associated with $X^D$,
namely, $P^D_t f(x) := \E^x [f(X^D_t)]$. We call  a Borel  measurable  function $p^D: (0,\infty) \times D \times D \to [0,\infty]$  the  \textit{heat kernel} (transition density) of  $(P^D_t)_{t \ge 0}$ (or  $X^D$) if the followings hold: 

\medskip

\setlength{\leftskip}{4mm}
\noindent (i) $P^D_t f(x) = \int_{D} p^D(t,x,y)f(y) \mu(dy)$ for all $t>0$, $x \in D$ and $f \in L^\infty(D;\mu)$.

\smallskip

\noindent (ii) $p^D(t+s,x,y) = \int_{D} p^D(t,x,z) p^D(s,z,y) \mu(dz)$ for all $t,s>0$ and $x,y \in D$.

\medskip

\setlength{\leftskip}{0mm}

\noindent 
We simply write $P_t$ for $P_t^M$ and  $p(t,x,y)$ for $p^M(t,x,y)$.   

We now consider the following near diagonal lower estimates on heat kernels:
\begin{align*}
	&\textit{There exist constants $R_\infty' \ge 1$, $\ups,\eta \in (0,1)$, $c_*>0$ such that for all $x \in M$ and}\nn\\
	&\textit{$r>R_\infty' {\mathsf{d}}(x)^\ups$, the heat kernel $p^{B(x,r)}(t,x,y)$ of $X^{B(x,r)}$ exists and} 
\end{align*}
\begin{equation}\label{B4+}
	p^{B(x,r)}( \phi(\eta r),y,z) \ge \frac{c_*}{V(x, r)} \quad \text{for all} \;\,  y,z \in B(x, \eta^2r).\tag{B4+} 
\end{equation}

\medskip

Under \eqref{B1} and \VRDi, the above condition \eqref{B4+} is stronger than \eqref{B4}. See Proposition \ref{r:E} below.

\begin{remark} \label{r:NDL} {\rm
The standard version of near diagonal lower estimates on heat kernels (without the restriction $r>R_\infty {\mathsf{d}}(x)^\ups$) has been studied a lot.
In particular, in \cite{CKW16b} and \cite{CKW19}, it is shown that, for a large class of symmetric Hunt process, the standard version of near diagonal lower estimates on heat kernels can be obtained under  \eqref{B2} and a H\"older-type  regularity of corresponding harmonic functions
See. \cite[Proposition 4.9]{CKW16b} and its proof.}

\end{remark}

\begin{cor}\label{c:infinf}
	Suppose that \VRDi \ holds. If \eqref{B1}, \eqref{B2}, \eqref{B3} and \eqref{B4+} hold, then there exists a constant $b_\infty \in (0, \infty)$ such that 
	\begin{equation}\label{2}
	\liminf_{t\to \infty} \frac{\sup_{0<s\le t} d(x, X_s)}{\phi^{-1}(t/\log\log t)} = b_\infty,\qquad
	{\mathbb P}^y\mbox{-a.s.} \;\; \forall x,y \in M.
	\end{equation}
\end{cor}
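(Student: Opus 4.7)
The plan is to combine Theorem \ref{t:infinf}, which already pins the liminf into the deterministic interval $[b_1, b_2]$, with the zero-one law for shift-invariant events of Proposition \ref{p:law01}. As a preliminary step, I must verify that Theorem \ref{t:infinf} applies: under \eqref{B1} and \VRDi{}, Proposition \ref{r:E} shows that \eqref{B4+} is strictly stronger than \eqref{B4}, so all of \eqref{B1}--\eqref{B4} hold, and Theorem \ref{t:infinf} yields
\begin{equation*}
L_x \;:=\; \liminf_{t \to \infty} \frac{\sup_{0 < s \le t} d(x, X_s)}{\phi^{-1}(t/\log\log t)} \;\in\; [b_1, b_2], \quad \mathbb{P}^y\text{-a.s.\ for every } x, y \in M.
\end{equation*}
The remaining task is to upgrade this to a deterministic value independent of $y$ (and of $x$).

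The core step is to check that $L_x$ is shift-invariant. Writing $Z_t = \sup_{0 < s \le t} d(x, X_s)$, the quantity $Z_t$ is nondecreasing in $t$, and the bound $L_x \ge b_1 > 0$ together with $\phi^{-1}(t/\log\log t) \to \infty$ (which follows from \eqref{e:scaling-infty}) forces $Z_t \to \infty$ a.s. Consequently, for each fixed $T > 0$,
\begin{equation*}
Z_{t+T} \;=\; Z_T \vee (Z_t \circ \theta_T) \;=\; Z_t \circ \theta_T \quad \text{for all sufficiently large } t,\ \text{a.s.}
\end{equation*}
Combined with the polynomial-type scaling \eqref{e:scaling-infty}, which yields $\phi^{-1}((t+T)/\log\log(t+T))/\phi^{-1}(t/\log\log t) \to 1$ as $t \to \infty$, this gives $L_x \circ \theta_T = L_x$ a.s. Hence $\{L_x \le a\}$ lies in the shift-invariant $\sigma$-algebra for every $a > 0$.

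Proposition \ref{p:law01}, whose hypotheses \VRDi{} and \eqref{B4+} are precisely what is assumed here, then yields $\mathbb{P}^y(L_x \le a) \in \{0, 1\}$ for every $a$, and moreover this probability does not depend on the starting point $y$. Setting
\begin{equation*}
b_\infty \;:=\; \inf\{a > 0 : \mathbb{P}^y(L_x \le a) = 1\} \;\in\; [b_1, b_2],
\end{equation*}
we obtain $L_x = b_\infty$ $\mathbb{P}^y$-a.s.\ for all $y$. Independence of the base point $x$ is free: writing $Z_t^{(x)} := \sup_{0 < s \le t} d(x, X_s)$, the triangle inequality yields $|Z_t^{(x)} - Z_t^{(x')}| \le d(x, x')$ for any $x' \in M$, and dividing by $\phi^{-1}(t/\log\log t) \to \infty$ shows that the two liminfs coincide.

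The main obstacle will be the shift-invariance step: one has to control the mismatch between $Z_{t+T}$ and $Z_t \circ \theta_T$ (handled by $Z_T < \infty$ and $Z_t \to \infty$) and, more delicately, the mismatch between the denominators $\phi^{-1}((t+T)/\log\log(t+T))$ and $\phi^{-1}(t/\log\log t)$, where one must upgrade doubling to $\phi^{-1}(t(1+o(1)))/\phi^{-1}(t) \to 1$ using monotonicity of $\phi$ together with \eqref{e:scaling-infty}. Everything else is a direct feed into the already-established zero-one law.
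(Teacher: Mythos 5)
Your overall skeleton matches the paper's: first invoke Proposition \ref{r:E}(ii) to pass from \eqref{B4+} to \eqref{B4} and apply Theorem \ref{t:infinf}, then establish shift-invariance of the relevant event and feed it into the zero-one law of Proposition \ref{p:law01}, and finally remove the $x$-dependence of the constant via the triangle inequality and the fact that $\phi^{-1}(t/\log\log t)\to\infty$. The last of these steps is argued cleanly. But there is a real gap in the shift-invariance step.

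The problematic claim is that $\phi^{-1}\big((t+T)/\log\log(t+T)\big)/\phi^{-1}(t/\log\log t)\to 1$. The weak doubling \eqref{e:scaling-infty} only yields the two-sided bound
\begin{equation*}
\Big(C_U^{-1}\,(1+o(1))\Big)^{1/\beta_2}\ \le\ \frac{\phi^{-1}\big((t+T)/\log\log(t+T)\big)}{\phi^{-1}(t/\log\log t)}\ \le\ \Big(C_L^{-1}\,(1+o(1))\Big)^{1/\beta_1},
\end{equation*}
with $C_L\le 1\le C_U$, so the ratio stays bounded between positive constants but has no reason to converge to $1$. (For the convergence to $1$ one would need $\phi^{-1}$ to be regularly varying, or at least $\log\phi^{-1}$ uniformly continuous on the log scale — strictly stronger than doubling. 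The hypotheses here allow $\phi$ to be nearly flat on long intervals, so that $\phi^{-1}$ can jump by a fixed factor when its argument changes by $o(1)$.) Consequently $L_x\circ\theta_T$ and $L_x$ need only agree up to multiplicative constants, and the sets $\{L_x\le a\}$ are not exactly shift-invariant; Proposition \ref{p:law01} then does not apply in the way you use it.

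The paper sidesteps this by proving shift-invariance not of $\{L_x\le a\}$ but of the $\phi$-transformed events
\begin{equation*}
E(x,\lambda):=\Big\{\liminf_{t\to\infty}\frac{\phi\big(\sup_{0<s\le t}d(x,X_s)\big)}{t/\log\log t}\ge\lambda\Big\}.
\end{equation*}
There the denominator shift is harmless — $(t+u)/\log\log(t+u)\sim t/\log\log t$ holds exactly because one never inverts $\phi$ — and the numerator is controlled by the trivial bound $\phi(a\vee b)\le\phi(a)+\phi(b)$ (using only that $\phi$ is increasing and positive), together with conservativeness of $X$ from Proposition \ref{p:EP}(ii), which ensures $\phi\big(\sup_{0<s\le u}d(x,X_s)\big)<\infty$ a.s.; the cross term then vanishes after dividing by $t/\log\log t\to\infty$. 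This yields $E(x,\lambda)\circ\theta_u=E(x,\lambda)$ up to null sets and so produces a genuinely shift-invariant family to which Proposition \ref{p:law01} applies. You should reformulate your shift-invariance argument in this $\phi$-form rather than trying to force asymptotics of $\phi^{-1}$.
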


\begin{remark}\label{r:a}
{\rm  (i)	
 Once we prove that the liminf LILs \eqref{1} and \eqref{c1} hold true with $\phi(x,r)=\E^x[\tau_{B(x,r)}]$, by the Blumenthal's zero-one law, they hold true with general $\phi$ satisfying \eqref{e:phi} after redefining constants $a_1,a_2, \wt a_1, \wt a_2$ by $C^{-1}a_1,Ca_2, C^{-1}\wt a_1, C\,\wt a_2$, respectively, with the constant $C \ge 1$ in \eqref{e:phi}. Similarly, thanks to the zero-one law given in Proposition \ref{p:law01}, it suffices to prove Theorem \ref{t:infinf} and Corollary \ref{c:infinf} with a particular function  $\phi(r):=\E^o[\tau_{B(o,r)}]$.

\noindent   (ii) Using the zero-one law in Proposition \ref{p:law01} again, we see that the liminf LIL \eqref{2} remains true even if  the function $\phi$, which comes from condition \eqref{B4+}, is replaced by any function $\wt \phi$ comparable to $\phi$.} 
\end{remark}

 Let us also consider the following 
 counterpart of \eqref{B4+}:
\begin{align*}
	&\textit{For a given open set $U \subset M$, there exist constants $R_0'>0$, $C_0',\eta  \in (0,1)$ and $c_*>0$ such that}\nn\\
	&\textit{for all $x \in U$ and $0<r<R_0' \wedge (C_0' \updelta_U(x))$, the heat kernel $p^{B(x,r)}(t,x,y)$ of $X^{B(x,r)}$ exists and}
\end{align*}
\begin{equation}\label{A4+}
	p^{B(x,r)}(\phi(x,\eta r),y,z) \ge \frac{c_*}{V(x, r)} \quad \text{for all} \;\,  y,z \in B(x, \eta^2r).\tag{A4+}
\end{equation}

\begin{prop}\label{r:E}
	(i) Suppose that \VRDo \ holds. If \eqref{A2} and \eqref{A4+} hold, then  \eqref{A4} holds  with some $R_0>0$ and $C_0\in (0,1)$.
	
	\noindent	(ii) Suppose that \VRDi \ holds. If \eqref{B2} and \eqref{B4+} hold, then  \eqref{B4} holds  with some $R_\infty\ge1$.
\end{prop}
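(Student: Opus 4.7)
The two parts are parallel, so I focus on (i); part (ii) runs by the same scheme after replacing (A2) by (B2), \VRDo\ by \VRDi, and restricting to $r > R_\infty' {\mathsf{d}}(x)^\ups$.

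\textbf{Lower bound in (A4).} I would integrate the pointwise bound in (A4+) over $z \in B(x, \eta^2 r)$ and invoke the volume lower ratio $V(x, \eta^2 r)/V(x, r) \geq c$ from \VRDo\ to obtain
$$
{\mathbb P}^y\bigl(X^{B(x,r)}_{\phi(x, \eta r)} \in B(x, \eta^2 r)\bigr) \;\geq\; c_* \frac{V(x, \eta^2 r)}{V(x, r)} \;\geq\; c_1 > 0
$$
for every $y \in B(x, \eta^2 r)$. The strong Markov property iterated at times $k\phi(x, \eta r)$, $k = 1,\dots,n-1$, then yields ${\mathbb P}^x(\tau_{B(x,r)} > n\phi(x, \eta r)) \geq c_1^n$, and a bounded number of applications of (A2) supplies $\phi(x, \eta r) \geq c' \E^x[\tau_{B(x,r)}]$. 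This delivers the $C_4 e^{-C_5 n}$ lower bound after adjusting constants.

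\textbf{Upper bound in (A4).} The plan is the standard strong-Markov iteration: if $\sup_{y \in B(x,r)} {\mathbb P}^y(\tau_{B(x,r)} > T) \leq q < 1$, then iterating Markov at time $T$ yields ${\mathbb P}^x(\tau_{B(x,r)} > nT) \leq q^n$. By Markov's inequality, $T = 2 \wt T_x$ with $\wt T_x := \sup_{y \in B(x,r)} \E^y[\tau_{B(x,r)}]$ works with $q = 1/2$, so it suffices to prove
$$
\wt T_x \;\leq\; C \, \E^x[\tau_{B(x,r)}].
$$
For $y \in B(x, r)$, the inclusion $B(x, r) \subset B(y, 2r)$ combined with (A2) gives $\E^y[\tau_{B(x,r)}] \leq \E^y[\tau_{B(y, 2r)}] \leq C_2 \E^y[\tau_{B(y, r)}]$, so the remaining step is a uniform comparison $\E^y[\tau_{B(y, r)}] \leq C \E^x[\tau_{B(x, r)}]$ for $y \in B(x, r)$.

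This last Harnack-type comparison of mean exit times across centers is the main technical obstacle in the absence of (A1). My plan is to apply (A4+) on a common enlarged ball such as $B(x, r/\eta^2)$, whose inner sub-ball $B(x, r)$ contains both $x$ and $y$; the on-diagonal heat-kernel lower bounds at both centers, combined with \VRDo\ making $V(x, r/\eta^2)$, $V(x, r)$ and $V(y, r)$ all mutually comparable, should produce the required two-sided comparison. The constants $R_0$ and $C_0$ in the conclusion are then obtained by shrinking $R_0'$ and $C_0'$ by a factor of $\eta^2$ to accommodate these enlarged balls. Part (ii) is handled analogously; the lower-doubling half of (B2), $2 \E^o[\tau_{B(o, s/\ell)}] \leq \E^o[\tau_{B(o, s)}]$, plays the role of ensuring that only a bounded number of halvings is needed to connect $\phi(\eta r)$ with $\E^x[\tau_{B(x, r)}]$ in the infinity range $r > R_\infty' {\mathsf{d}}(x)^\ups$.
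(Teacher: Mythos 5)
Your lower bound is correct and essentially the argument one would want: integrate \eqref{A4+} over $B(x,\eta^2 r)$, invoke the doubling half of \VRDo\ to get a uniform single-step survival probability, iterate the Markov property at step $\phi(x,\eta r)$, and use finitely many applications of \eqref{A2} to convert $\phi(x,\eta r)$ into a bounded multiple of $\E^x[\tau_{B(x,r)}]$.

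The upper bound contains a genuine gap. You reduce the problem to the (A1)-type Harnack comparison $\E^y[\tau_{B(y,r)}]\le C\,\E^x[\tau_{B(x,r)}]$ and propose to derive it from ``on-diagonal heat-kernel lower bounds at both centers.'' That cannot work as stated: a \emph{lower} bound on $p^{B(\cdot,\cdot)}$ controls survival probabilities only from below, hence yields lower bounds on $\E^y[\tau_{B(y,r)}]$, never the needed upper bound; the mutual comparability of $V(x,r/\eta^2)$, $V(x,r)$, $V(y,r)$ does not repair this. The ingredient you are missing is the \emph{reverse-doubling} half of \VRDo. Apply \eqref{A4+} on the ball $B(x,Kr)$ with $K=L/\eta^2$, where $L>1$ is chosen via reverse doubling so that $\mu\big(B(x,Lr)\setminus B(x,r)\big)\ge V(x,r)$. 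Integrating the heat-kernel lower bound at time $t:=\phi(x,\eta Kr)$ over the annulus $B(x,Lr)\setminus B(x,r)\subset B(x,\eta^2Kr)$ gives, for every $y\in B(x,r)$,
\begin{equation*}
{\mathbb P}^y\big(\tau_{B(x,Kr)}>t,\;X_t\in B(x,Lr)\setminus B(x,r)\big)\;\ge\;c_*\,\frac{\mu\big(B(x,Lr)\setminus B(x,r)\big)}{V(x,Kr)}\;\ge\;c_2>0
\end{equation*}
by the doubling upper bound. Since $\{X_t\notin B(x,r)\}\subset\{\tau_{B(x,r)}\le t\}$, this shows ${\mathbb P}^y(\tau_{B(x,r)}>t)\le 1-c_2$ uniformly for $y\in B(x,r)$; iterating the Markov property at the \emph{deterministic} time $t$ gives ${\mathbb P}^x(\tau_{B(x,r)}>nt)\le(1-c_2)^n$, and finitely many uses of \eqref{A2} convert $t=\phi(x,\eta Kr)$ into a constant multiple of $\E^x[\tau_{B(x,r)}]$ (after shrinking $R_0'$ and $C_0'$ by the factor $K$, which the proposition's conclusion permits). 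This bypasses the detour through $\E^y[\tau_{B(y,r)}]$ entirely; a posteriori the same annulus argument does give $\sup_{y\in B(x,r)}\E^y[\tau_{B(x,r)}]\le C\,\E^x[\tau_{B(x,r)}]$, so your reduction is recoverable, but the operative tool is the annulus measure and reverse doubling, not the on-diagonal bound. The paper's own proof is a citation to \cite[Proposition 4.3(i)]{CKL}, and this annulus-plus-reverse-doubling mechanism is what sits inside that reference.
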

\begin{proof} (i) By following the proof of \cite[Proposition 4.3(i)]{CKL} and using $\phi(x,r)$ instead of $\phi(r)$ therein, we can deduce that there exist constants $R_1,c_2,c_3>0$, $c_1>1$ and $C_0 \in (0,1)$ such that for all $x \in U$, $0<c_1r<R_1 \wedge (C_0 \updelta_U(x))$ and $n \ge 1$,
\begin{equation*}
	e^{-c_2n} \le {\mathbb P}^x\big(\tau_{B(x,r)} \ge n\E^x[\tau_{B(x,c_1r)}](x,c_1r)\big) \le e^{-c_3 n}.
\end{equation*}
By taking $R_1$ small enough if needed, we may assume that \eqref{A2} holds with $R_0=R_1$. Using \eqref{e:scaling-zero},  it follows that  for all $x \in U$, $0<r<R_1 \wedge (C_0 \updelta_U(x))$ and $n \ge 1$,
\begin{align*}
	e^{-c_2n} &\le {\mathbb P}^x\big(\tau_{B(x,r)} \ge n\phi(x,c_1r)\big)  \le 	{\mathbb P}^x\big(\tau_{B(x,r)} \ge n\phi(x,r)\big)\\
	& \le   {\mathbb P}^x\big(\tau_{B(x,r)} \ge c_1^{-\beta_2}C_U^{-1}n\phi(x,c_1r)\big) \le e^{-c_3( c_1^{-\beta_2}C_U^{-1}n -1 )}.
\end{align*}
(ii) Analogously, we can deduce the result by following the proof of  \cite[Proposition 4.3(ii)]{CKL}. \end{proof}

	The rest of the paper is organized as follows. In Sections \ref{s:Feller}--\ref{s:hunt}, we 
	show that the conditions 
	\eqref{A1}-\eqref{A4}, \eqref{B1}-\eqref{B4}  and \eqref{B4+} 
	can be checked  for  important classes of Markov jump processes, which may be non-symmetric and space-inhomogeneous. Thus we can
	apply our main theorems to get  explicit liminf LILs	for them.

	 Precisely, in  Section \ref{s:Feller}, we consider general (non-symmetric) Feller processes on ${\mathbb R}^d$ whose domain of the generator  contains $C_c^\infty({\mathbb R}^d)$ 
	 and introduce some local assumptions (see \eqref{O1}-\eqref{O4} below). 
	 Under these assumptions, we establish liminf LIL at zero for Feller processes on ${\mathbb R}^d$.
	 Then, combining results in this paper and \cite{CKL}, we present concrete examples of  non-symmetric Feller processes and Feller processes with singular L\'evy measures
	for which  both liminf LILs and limsup LILs hold.
  In the remainder of Section \ref{s:Feller}, we give another assumption (see {\bf (S)} below), which can be  checked directly from the symbols of Feller processes.  As a consequence, we show that  liminf LILs at zero  holds for Feller processes with symbols of varying order.
	 
Section \ref{s:RCM} revisits \cite[Section 3]{CKL} and  discuss liminf LILs for the random conductance model with long range jumps studied in \cite{CKW18, CKW20-1}. 
Our conditions at infinity is motivated by 	the random conductance model therein.
	 
  In Section \ref{s:hunt}, we deal with subordinate processes and symmetric Hunt processes whose tail of the L\'evy measure decays in (mixed) polynomial order. We assume that there is a Hunt process $Z$  enjoying sub-Gaussian heat kernel estimates. Then we show  that general liminf LIL holds true for  every subordinate process of $Z$ if the corresponding subordinator is not a  compound Poisson process. In particular, we get liminf LILs for jump processes with low intensity of small jumps such as geometric stable processes. Using local stability theorems obtained in \cite{CKL}, we also get liminf LIL for symmetric Hunt processes associated with a regular Dirichlet form.

  Section \ref{s:proof} is devoted to the proofs of our main theorems.  We follow the well-known arguments in 
  \cite[Chapter 3]{Du74},   \cite[Theorem 2]{KS}  and   \cite[Theorem 3.7]{KKW17}. 
  But non-trivial modifications are required since we allow our processes and state spaces to be highly  space-inhomogeneous.
 The paper ends with Appendix \ref{s:A} which contains some comparisons between conditions of the current paper and \cite{CKL} and a simple lemma about lower heat kernel estimates for Dirichlet heat kernel.

\medskip

{\bf Notations}: Values of capital letters with subscripts $C_i$, $i=0,1,2,...$ are fixed throughout the paper
 both at zero and at infinity. 
Lower case letters with subscripts $a_i$, $c_i$, $i=0,1,2,...$  denote positive real constants  and are fixed in each statement and proof, and the labeling of these constants starts anew in each proof. 
We use the symbol ``$:=$'' to denote a definition, 
which is read as ``is defined to be.''  
Recall that $a\wedge b:=\min\{a,b\}$ and  $a\vee b:=\max\{a,b\}$. We  denote by $\overline{A}$ the closure of $A$. We extend a function $f$ defined on $M$ to $M_\partial$ by setting $f(\partial)=0$. The notation $f(x) \asymp g(x)$ means that there exist constants $c_2 \ge c_1>0$ such that $c_1g(x)\leq f (x)\leq c_2 g(x)$ for  specified range of $x$. 
For $D \subset M$, denote by $C_c(D)$ the space of all continuous functions  with compact support in $D$.

\section{LIL for Feller processes on ${\mathbb R}^d$}\label{s:Feller}

Throughout this section, we assume that $X$ is a  Feller process on ${\mathbb R}^d$ with the generator $(\sL, \sD(\sL))$ such that $C_c^\infty({\mathbb R}^d) \subset \sD(\sL)$. It is well known that the generator $\sL$ restricted to $C_c^\infty({\mathbb R}^d)$ is a \textit{pseudo-differential operator}, which has the following representation (see \cite{Co65}): 
\begin{equation*}
	\sL u(x)= - (2\pi)^{-d} \int_{{\mathbb R}^d} e^{i \la x,  \xi \ra}q(x, \xi) \int_{{\mathbb R}^d} e^{- i \la y, \xi\ra}u(y)dyd \xi, \quad  u \in C_c^\infty({\mathbb R}^d),
\end{equation*}
where the function  $q:{\mathbb R}^d \times {\mathbb R}^d \to \mathbb{C}$, which is called the {\it symbol} of $X$ (or $\sL$),  enjoys the following {L\'evy-Khinchine formula}
\begin{equation*}
	q(x, \xi) = {\mathbf c}(x) - i \la {\mathbf b}(x), \xi \ra +  \la \xi, {\mathbf a}(x)\xi \ra + \int_{{\mathbb R}^d \setminus \{0\}} (1- e^{i\la z, \xi \ra} + i \la z, \xi \ra \1_{\{| z| \le 1\}})\nu(x, dz).
\end{equation*}
Here $({\mathbf c}(x), {\mathbf b}(x), {\mathbf a}(x), \nu(x, dz))_{x \in {\mathbb R}^d}$ is a family of the L\'evy characteristics, that is, ${\mathbf c}:{\mathbb R}^d \to [0,\infty)$ and ${\mathbf b}:{\mathbb R}^d \to {\mathbb R}^d$ are measurable functions, ${\mathbf a}:{\mathbb R}^d \to {\mathbb R}^{d \times d}$ is a nonnegative definite matrix-valued function, and $\nu(x,dz)$ is a nonnegative, $\sigma$-finite kernel on ${\mathbb R}^d \times \sB({\mathbb R}^d \setminus \{0\})$ such that $\int_{{\mathbb R}^d \setminus \{0\}}(1 \land |z|^2) \nu(x, dz)<\infty$ for every $x \in {\mathbb R}^d$. 
{\it Throughout this section, we always assume that  ${\mathbf c}(x)$ is  identically zero} so that $X$ has no killing inside.

\medskip

Define for $x \in {\mathbb R}^d$ and $r>0$,
\begin{equation}\label{e:def-Phi}
\Phi(x,r) = \bigg(\sup_{|\xi| \le 1/r} \text{Re} \, q(x,\xi) \bigg)^{-1}.
\end{equation}
For example, if $q(x,\xi)=|\xi|^{\alpha(x)}(\log(1+|\xi|))^{\gamma(x)}$, then $\Phi(x,r)=r^{\alpha(x)} (\log ( 1+ 1/r))^{-\gamma(x)}$.

\medskip

Here are our assumptions on the Feller process $X$. Let $U \subset {\mathbb R}^d$ be an open subset.

\medskip

\setlength{\leftskip}{5mm}

\textit{There exist constants $R_0,  \eps_0>0$, $C_i\in(0,1)$, $8 \le i \le 11$ such that for every $x \in U$ and $\xi \in {\mathbb R}^d$ with $1/|\xi|<R_0 \wedge (C_8 \updelta_U(x))$}, the following hold
\begin{equation}\label{O1}\tag{O1}
 \lim_{r \to 0} \Phi(x,r)=0 \quad \text{(or, equivalently, either ${\mathbf a}(x) \neq 0$ or $\nu(x, {\mathbb R}^d)=\infty$)},
\end{equation}\\[-7mm]
\begin{equation}\label{O2}\tag{O2}
\sup_{|\xi'|\le |\xi|}\text{Re} \, q(x, \xi') \ge C_9 |  \, \text{Im} \, q(x, \xi) |,
\end{equation}\\[-7mm]
\begin{equation}\label{O3}\tag{O3}
\inf_{| x-y | \le 1/|\xi|} \textrm{Re}\,q(y,\xi) \ge C_{10} \sup_{| x-y|\le 1/|\xi|} \textrm{Re}\,q(y,\xi),
\end{equation}\\[-7mm]
\begin{equation}\label{O4}\tag{O4}
{\mathbb P}^x\big(2\la  X_t- x, z \ra \le -| X_t -x |  \big) \ge C_{11} \quad \text{for all} \;\;|z|=1 \; \text{and} \; 0<t<\eps_0\Phi\big(x, R_0 \wedge (C_8 \updelta_U(x))\big).
\end{equation}

\setlength{\leftskip}{0mm}

\medskip

  Under \eqref{O1}, the probability that the process $X$ starting from $x\in U$  stays at $x$ for a positive time is zero.
\eqref{O2} is not only a local formulation of \textit{the sector condition} but also a weaker version of it since we take the supremum in the left-hand side.  \eqref{O3} and \eqref{O4} give a weak spatial homogeneity of the process in $U$.  Similar conditions are appeared in \cite{KS} (see (A2)-(A3) therein) where small time Chung-type LILs for one-dimensional L\'evy-type processes were studied.  

\begin{remark}\label{r:O4}
	{\rm When $d=1$, \eqref{O4} is equivalent to  ${\mathbb P}^x(X_t-x \ge 0) \in [C_{11}, 1-C_{11}]$ for all $x \in U$ and $t \le \Phi\big(x, R_0 \wedge (C_8 \updelta_U(x))\big)$. Thus, if $d=1$ and $X$ is symmetric, then \eqref{O4} holds with $C_{11}=1/2$.
	}
\end{remark}

Following \cite{Pr81}, we set for $x \in {\mathbb R}^d$ and $r>0$,
\begin{equation}\label{e:def-h}
h(x,r)= \frac{1}{r^2} \lVert {\mathbf a}(x) \rVert +  \int_{{\mathbb R}^d} \left(\frac{ |z |^2}{r^2} \land 1 \right)  \nu(x, dz),
\end{equation}
where $\lVert {\mathbf a}(x)\rVert:=\sup_{\xi\in {\mathbb R}^d, \, |\xi|\le 1} \la \xi, {\mathbf a}(x)\xi\ra$. The function $h$ frequently appears in maximal inequalities for L\'evy-type processes.  See, e.g. \cite{Du74, KS}.

The following result is well-known. We give a full proof for the reader's convenience.

\begin{lem}\label{l:Phi-1}
	There exists a constant $C_{12}>1$ which depends only on the dimension $d$ such that 
	\begin{equation*}
	 \sup_{| \xi| \le 1/r} \text{\rm Re} \, q(x, \xi ) \le  2h(x,r) \le  C_{12} \sup_{1/(2r) \le | \xi| \le 1/r} \text{\rm Re} \, q(x, \xi ) \quad \text{for all} \;\; x \in {\mathbb R}^d, \; r>0.
	\end{equation*}
In particular, it holds that
\begin{equation}\label{e:Phi=h}
\frac{1}{2h(x,r)} \le 	\Phi(x,r) \le \frac{C_{12}}{2h(x,r)} \quad \text{for all} \;\; x \in {\mathbb R}^d, \; r>0.
\end{equation}
\end{lem}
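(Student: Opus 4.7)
The plan is to establish the two inequalities in the chain separately. The upper bound $\sup_{|\xi|\le 1/r}\text{Re}\,q(x,\xi)\le 2h(x,r)$ is elementary: applying $\langle\xi,\mathbf{a}(x)\xi\rangle\le\|\mathbf{a}(x)\||\xi|^2\le \|\mathbf{a}(x)\|/r^2$ together with the pointwise bound $1-\cos s\le(s^2/2)\wedge 2$ term by term in the L\'evy--Khintchine representation, and splitting the jump integral on $\{|z|\le r\}$ and $\{|z|>r\}$, gives the bound summand by summand.

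For the second inequality $2h(x,r)\le C_{12}\sup_{S_r}\text{Re}\,q(x,\xi)$, where $S_r:=\{\xi\in\R^d:1/(2r)\le|\xi|\le 1/r\}$, I would average $\text{Re}\,q(x,\cdot)$ against normalized Lebesgue measure on $S_r$, using that the supremum dominates the average. By rotational symmetry, averaging $\langle\xi,\mathbf{a}(x)\xi\rangle$ over $S_r$ yields $c_d\,\mathrm{tr}\,\mathbf{a}(x)/r^2\ge c_d\|\mathbf{a}(x)\|/r^2$. Averaging the jump part gives $\int(1-g_d(|z|/r))\nu(x,dz)$, where $g_d(t)$ is the $S_r$-average of $\xi\mapsto\cos\langle z,\xi\rangle$; by rotational invariance this depends only on $t:=|z|/r$.

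The main obstacle is the uniform lower bound $1-g_d(t)\ge c_d'\min(1,t^2)$ for $t>0$, which is what turns the jump-part average into a constant multiple of $\int(|z|^2/r^2\wedge 1)\nu(x,dz)$ and so, together with the Gaussian contribution, into $c_d'' h(x,r)$. I would verify the bound in three regimes. For $t$ small, second-order Taylor expansion of $\cos$ gives the quadratic lower bound. For $t$ large I would use the Bessel representation of the spherical average (reducing the shell average to a radial integral) and the standard decay $|\psi_d(t)|\lesssim t^{-(d-1)/2}$ when $d\ge 2$; in $d=1$, the product-to-sum identity yielding $g_1(t)=\cos(3t/4)\sin(t/4)/(t/4)$ supplies the analogous $t^{-1}$ decay. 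Hence $g_d(t)\to 0$ as $t\to\infty$, so $1-g_d(t)\ge 1/2$ beyond a dimension-dependent threshold. The intermediate range is closed by continuity together with the strict inequality $g_d(t)<1$ for all $t>0$, which follows from $|\text{sinc}|<1$ away from the origin.

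Once these bounds are assembled, one obtains $2h(x,r)\le C_{12}\sup_{S_r}\text{Re}\,q(x,\xi)$ with $C_{12}$ depending only on $d$. The display \eqref{e:Phi=h} then follows by taking reciprocals in the definition \eqref{e:def-Phi} of $\Phi$: the already-proved upper half gives $\Phi(x,r)\ge 1/(2h(x,r))$, while the trivial inclusion $\sup_{|\xi|\le 1/r}\text{Re}\,q(x,\xi)\ge\sup_{S_r}\text{Re}\,q(x,\xi)$ combined with the shell lower bound gives $\Phi(x,r)\le C_{12}/(2h(x,r))$.
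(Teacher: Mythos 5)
Your proposal is correct and follows essentially the same strategy as the paper's proof: the upper bound via $1-\cos y\le y^2\wedge 2$, and the lower bound by dominating the shell supremum by the shell average, reducing the jump-part estimate to a uniform lower bound on $1-g_d(t)$ of the form $c\min(1,t^2)$. The paper handles the large-$t$ regime by an explicit Riemann--Lebesgue-type calculation ($\inf_{a>1}\int_0^{1/2}(1-\cos a\rho_1)\,d\rho_1>0$) rather than invoking Bessel decay, and bounds the Gaussian contribution by the trivial inequality $\sup_{S_r}\text{Re}\,q\ge r^{-2}\|\mathbf{a}(x)\|$ rather than by averaging, but these are only cosmetic differences in execution.
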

\begin{proof} Using the inequality $1-\cos y \le y^2 \wedge 2$ for $y \in {\mathbb R}$, we get that for all $x \in {\mathbb R}^d$ and $r>0$,
\begin{equation*}
	\sup_{|\xi| \le 1/r}\text{Re}\,q(x, \xi) \le \frac{1}{r^2} \lVert {\mathbf a}(x) \rVert  + \sup_{|\xi| \le 1/r} \int_{{\mathbb R}^d } (1- \cos \la z, \xi \ra) \nu(x, dz) \le 2 h(x,r).
\end{equation*}

Next, it is clear that $\sup_{1/(2r)\le | \xi| \le 1/r} \text{Re} \, q(x, \xi ) \ge r^{-2} \lVert {\mathbf a}(x) \rVert$. Moreover, using Tonelli's theorem and the inequality  $1-\cos y \ge y^2/4$ for $|y| \le 1$, we  get that for all $x \in {\mathbb R}^d$ and $r>0$, 
\begin{align*}
	&\sup_{1/(2r)\le | \xi| \le 1/r} \text{Re} \, q(x, \xi ) \ge c_1 \int_{1/2 \le  |\rho|\le 1} \text{Re} \, q(x, \rho/r) d \rho \\
	&\ge c_1 \int_{{\mathbb R}^d} \int_{1/2 \le  |\rho|\le 1} (1- \cos \la z, \rho/r \ra) d \rho \, \nu(x,dz)\nn\\
	& \ge c_1 \left(  \int_{|z| \le r}  \frac{|z|^2}{r^2}  \int_{1/2 \le |\rho| \le 1} 
	\hskip-.1in
	\frac{\la z/|z|, \rho \ra^2}{4} d\rho  \nu(x, dz)   +\int_{|z|>r } \int_{ 1/2 \le |\rho | \le 1}	\hskip-.2in (1- \cos \la z/r, \rho \ra ) d \rho \, \nu(x,dz) \right)\\
	&\ge  c_1 \int_{{\mathbb R}^d} \left(\frac{ |z |^2}{r^2} \land 1 \right) \nu(x, dz)  \inf_{y \in {\mathbb R}^d, \, |y|=1} \left(  \int_{ 1/2 \le |\rho| \le 1} 	\hskip-.1in\frac{\la y, \rho \ra^2}{4} d\rho  \wedge \int_{1/2 \le |\rho| \le 1} (1- \cos \la y, \rho \ra) d\rho  \right).
\end{align*}
Let $\mathbf e_1:=(1,0,...,0)\in {\mathbb R}^d$. By the symmetry, we see that for all $y \in {\mathbb R}^d$ with $|y|=1$,
\begin{align*}
	 \int_{ 1/2 \le |\rho| \le 1}\la y, \rho \ra^2 d\rho\ge   \int_{ 1/2 \le |\rho| \le 1, \, \la \mathbf e_1, \rho \ra \ge |\rho|/2} \la \mathbf e_1, \rho \ra^2 d\rho \ge \frac{1}{16}\int_{ 1/2 \le |\rho| \le 1, \, \la \mathbf e_1, \rho \ra \ge |\rho|/2}  d\rho=c_2
\end{align*}
and
\begin{align*}
	&\int_{1/2 \le |\rho| \le 1} (1- \cos \la y, \rho \ra) d\rho =	\inf_{a>1} \int_{1/2 \le |\rho| \le 1} (1- \cos a\la \mathbf e_1, \rho \ra) d\rho \\
	&\ge 2\inf_{a>1} \int_{0}^{1/2}  \int_{\wt \rho \in {\mathbb R}^{d-1}, |\wt \rho| \le 1/2}  (1- \cos a\rho_1 )   d\wt\rho d\rho_1 \ge c_3\inf_{a>1} \int_{0}^{1/2}    (1- \cos a\rho_1 )   d\rho_1.
\end{align*}
Since $\lim_{a \to \infty} \int_{0}^{1/2}    (1- \cos a\rho_1 )   d\rho_1= 1/2$, it holds that $\inf_{a>1}\int_{0}^{1/2}    (1- \cos a\rho_1 )   d\rho_1>0$. Therefore, using the inequality $a \vee b \ge (a+b)/2$ for  $a,b \in {\mathbb R}$, we get  that  
$\sup_{1/(2r)\le  | \xi| \le 1/r}$ $\text{Re} \, q(x, \xi ) $ $\ge$ $c_4 h(x,r)$ and finish the proof.  
\end{proof}

  Recall that  $\Phi$ is defined  in \eqref{e:def-Phi}.
It is clear that $s^2h(x,s) \le r^2h(x,r)$ for all $x \in{\mathbb R}^d$ and $0<s\le r$. Thus, by \eqref{e:Phi=h}, there exists a constant $C_U'>0$ depends only on $d$ such that
\begin{equation}\label{e:Phi-upper}
	\frac{\Phi(x,r)}{\Phi(x,s)} \le C_U' \bigg(\frac{r}{s}\bigg)^{2} \quad \text{for all} \; x \in {\mathbb R}^d, \; 0<s\le r.
\end{equation}

\begin{lem}\label{l:Phi-2}
Suppose that \eqref{O3} holds for an open subset $U \subset {\mathbb R}^d$. Then there exists a constant $C_{13} \in (0,1)$ such that
$$
\inf_{|x-y|\le 2r} \Phi(y,r) \ge C_{13} \sup_{|x-y| \le 2r} \Phi(y,r) \quad \text{for all} \;\; x \in U, \; 0<4r<R_0 \wedge (C_8 \updelta_U(x)).
$$
\end{lem}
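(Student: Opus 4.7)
The plan is to reduce the lemma to a pointwise comparison of the symbol $\mathrm{Re}\,q(\cdot,\xi)$ across $B(x,2r)$ by first using Lemma \ref{l:Phi-1} to restrict $\xi$ to a frequency annulus, and then applying \eqref{O3} with a short chain of midpoints to handle the geometric mismatch between the radii $r$ and $2r$.

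First I would invoke Lemma \ref{l:Phi-1}: for any $y \in {\mathbb R}^d$ one has
\begin{equation*}
\sup_{|\xi|\le 1/r}\mathrm{Re}\,q(y,\xi)\le 2h(y,r)\le C_{12}\sup_{1/(2r)\le|\xi|\le 1/r}\mathrm{Re}\,q(y,\xi),
\end{equation*}
so the full supremum is comparable to the supremum on the annulus up to the constant $C_{12}$, and it suffices to compare the annular supremum for $y=y_1$ and $y=y_2$ across $y_1,y_2\in B(x,2r)$. For $\xi$ in this annulus, $s:=1/|\xi|\in[r,2r]$, which makes chaining in a bounded number of steps possible.

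Concretely, for such a fixed $\xi$ I would introduce the midpoints $w_1:=(3y_1+y_2)/4$ and $w_2:=(y_1+3y_2)/4$, which lie in $B(x,2r)$ by convexity. Since $|y_1-y_2|/4\le r\le s$, the pair $\{y_1,(y_1+y_2)/2\}$ lies in $B(w_1,s)$ and $\{(y_1+y_2)/2,y_2\}$ lies in $B(w_2,s)$. Two applications of \eqref{O3} with base points $w_1$ and $w_2$ then chain to $\mathrm{Re}\,q(y_1,\xi)\le C_{10}^{-2}\,\mathrm{Re}\,q(y_2,\xi)$; combining with the reduction above, swapping $y_1,y_2$ by symmetry, and taking reciprocals yields the lemma with $C_{13}:=C_{10}^2/C_{12}\in(0,1)$.

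The main obstacle is verifying that \eqref{O3} actually applies with the chain midpoints as base points, that is, $s<R_0\wedge(C_8\updelta_U(w_i))$. The bound $s<R_0$ is immediate from $s\le 2r<R_0/2$. For the other side, the triangle inequality combined with $|x-w_i|\le 2r$ gives $\updelta_U(w_i)\ge\updelta_U(x)-2r$, and the hypothesis $4r<C_8\updelta_U(x)$ together with $C_8<1$ yields $\updelta_U(w_i)>2r/C_8\ge s/C_8$, which is exactly what is required. Once this domain check is carried out, the chaining is routine.
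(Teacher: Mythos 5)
Your proof is correct. The strategy — reduce to the annular supremum via Lemma \ref{l:Phi-1}, compare $\mathrm{Re}\,q(\cdot,\xi)$ across $\overline{B(x,2r)}$ pointwise in $\xi$ via \eqref{O3}, then take reciprocals — matches the paper's, but the execution of the \eqref{O3} step is genuinely different and is worth noting.

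You restrict to the annulus $1/(2r)\le|\xi|\le 1/r$, which keeps $s=1/|\xi|$ in $[r,2r]$. Since $s$ may be smaller than $2r$, the ball $\overline{B(x,s)}$ on which \eqref{O3} operates does not cover all of $\overline{B(x,2r)}$, and you correctly resolve this by chaining through two auxiliary centers $w_1,w_2$, paying $C_{10}^{-2}$ and carrying out the nontrivial domain check $s<C_8\,\updelta_U(w_i)$. The paper instead works on the annulus $1/(4r)\le|\xi|\le 1/(2r)$, so $1/|\xi|\in[2r,4r]$ and $\overline{B(x,2r)}\subset \overline{B(x,1/|\xi|)}$; then \eqref{O3} applies directly at base point $x$ (the domain condition $1/|\xi|\le 4r<R_0\wedge(C_8\updelta_U(x))$ is immediate) and the comparison costs only one factor $C_{10}$, with no chaining. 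The price for the paper's shift to the coarser annulus is one invocation of the doubling estimate \eqref{e:Phi-upper} to pass from $\Phi(y,r)$ to $\Phi(y,2r)$, which your version avoids. So: your argument trades that scaling step for a two-step chain and the associated geometry; the paper's trades the chain for the scaling step. Both yield a constant of the same quality, and your domain verification for the midpoints is correct (the inequality $\updelta_U(w_i)\ge\updelta_U(x)-2r>4r/C_8-2r>2r/C_8\ge s/C_8$ is exactly what is needed, using $C_8<1$).
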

\begin{proof} By \eqref{e:Phi-upper}, Lemma \ref{l:Phi-1} and  \eqref{O3}, we get that for all $x \in U$ and $0<4r<R_0 \wedge (C_8 \updelta_U(x))$,
\begin{align*}
&\inf_{|x-y|\le 2r} \Phi(y,r) 
 \ge c_1	\inf_{|x-y| \le 2r} \bigg(	\sup_{1/(4r) \le |\xi|\le 1/(2r)} \text{Re}\, q(y,\xi) \bigg)^{-1}\\
	&= c_1	\inf_{1/(4r) \le |\xi|\le 1/(2r)}\,	\inf_{|x-y| \le 2r}  \text{Re}\, q(y,\xi)^{-1} \ge c_1 C_{10} 	\inf_{1/(4r) \le |\xi|\le 1/(2r)}\,	\sup_{|x-y| \le 2r}  \text{Re}\, q(y,\xi)^{-1}\\
		&\ge c_2	\sup_{|x-y| \le 2r} \,	\inf_{1/(4r)\le |\xi|\le 1/(2r)} \text{Re}\, q(y,\xi)^{-1} \ge c_2 C_{10} 	\sup_{|x-y| \le 2r}  \Phi(x,r).
\end{align*}
\end{proof}

 As an application of our Theorem \ref{inf}, we obtain the following  LIL for Feller processes.  See  \cite[Theorem 2]{KS} for a 1-dimensional result under  similar assumptions.

\begin{thm}\label{t:Feller1}
	Let $X$ be a Feller process on ${\mathbb R}^d$ with symbol $q$. Suppose that \eqref{O1}-\eqref{O4} hold for an open subset $U \subset {\mathbb R}^d$.  Then, there are constants $a_2 \ge a_1>0$ such that for all $x \in U$, there exists  a constant $a_x \in [a_1,a_2]$ satisfying
	\begin{equation}\label{eq:Feller1}
		\liminf_{t\to0} \frac{\Phi (x, \,\sup_{0<s\le t} | X_s - x | )}{t/\log|\log t|}=a_x,\qquad
		{\mathbb P}^x\mbox{-a.s.}
	\end{equation}
Moreover, if there exist constants $\beta_1',C_L'>0$ such that
\begin{equation}\label{e:Phi-lower}
	\frac{\Phi(x,r)}{\Phi(x,s)} \ge C_L' \bigg(\frac{r}{s}\bigg)^{\beta_1'} \quad \text{for all} \; x \in U, \; 0<s\le r<R_0 \wedge (C_8\updelta_U(x)),
\end{equation}
then there are constants $\wt a_2 \ge \wt a_1>0$  such that for all $x \in U$, there exists  a constant $\wt a_x \in [\wt a_1,\wt a_2]$ satisfying
\begin{equation}\label{eq:Feller2}
	\liminf_{t\to0} \frac{\sup_{0<s\le t} d(x, X_s)}{\Phi^{-1}(x,t/\log|\log t|)}=\wt a_x,\qquad
	{\mathbb P}^x\mbox{-a.s.}
\end{equation}
\end{thm}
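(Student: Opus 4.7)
The plan is to apply Theorem \ref{inf} (and Corollary \ref{c:inf} for the refined statement) with the choice $\phi(x,r) := \Phi(x,r)$. By Remark \ref{r:a}(i), the conclusions \eqref{eq:Feller1} and \eqref{eq:Feller2} will follow once I check that \eqref{A1}--\eqref{A4} hold with this $\phi$ on $U$, and that the hypothesis \eqref{e:Phi-lower} translates directly to \eqref{e:scaling-zero-lower}. The crux of everything is the two-sided exit-time estimate $\E^x[\tau_{B(x,r)}] \asymp \Phi(x,r)$, uniformly in $x \in U$ and $0<r<R_0' \wedge (C_8\updelta_U(x))$ for some sufficiently small $R_0'>0$ (possibly smaller than the $R_0$ appearing in \eqref{O1}--\eqref{O4}).

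To establish this comparability, I would first upgrade Lemma \ref{l:Phi-1} to a Pruitt-type maximal inequality
\begin{equation*}
{\mathbb P}^x\Bigl(\sup_{s\le t} |X_s-x| \ge r\Bigr) \le c_d \, t \sup_{y\in B(x,r)} h(y,r),
\end{equation*}
obtained by applying Dynkin's formula to $y\mapsto e^{i\langle \xi, y-x\rangle}$ stopped at $t\wedge \tau_{B(x,r)}$ and integrating over $|\xi|\le 1/r$. Using \eqref{O3} and Lemma \ref{l:Phi-2}, $\sup_{y\in B(x,r)} h(y,r) \asymp h(x,r) \asymp 1/\Phi(x,r)$, so the lower bound $\E^x[\tau_{B(x,r)}] \ge c_1 \Phi(x,r)$ follows from taking $t = \varepsilon\Phi(x,r)$ with $\varepsilon$ small. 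The upper bound $\E^x[\tau_{B(x,r)}] \le c_2 \Phi(x,r)$ is the more delicate half. Following the template of \cite[Theorem 2]{KS}, I would test Dynkin's formula against the real function $f_\xi(y) := 1-\cos\langle \xi, y-x\rangle$ with $|\xi|\asymp 1/r$; the sector condition \eqref{O2} absorbs the imaginary part of $q$ into its real part, while \eqref{O4} supplies a directionally-uniform lower bound on $\E^x[f_\xi(X_{t\wedge \tau_{B(x,r)}})]$, from which ${\mathbb P}^x(\tau_{B(x,r)}\le t) \ge c_3 > 0$ at scale $t\asymp \Phi(x,r)$. Iterating through the strong Markov property at $\tau_{B(x,r)}$ then closes the upper estimate.

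Once the comparability is in hand, \eqref{A1}--\eqref{A4} follow quickly: \eqref{A1} is an immediate consequence of Lemma \ref{l:Phi-2}; \eqref{A2} combines \eqref{O1} (which forces $\Phi(x,r)\downarrow 0$) with the upper scaling \eqref{e:Phi-upper}; \eqref{A3} comes from the trivial bound $\nu(x,\R^d \setminus B(0,r)) \le h(x,r)$ together with Lemma \ref{l:Phi-1}; and the exponential two-sided tails \eqref{A4} are obtained by the standard strong Markov iteration of the Pruitt maximal inequality (the lower tail is deduced analogously by iterating the event that the process remains in $B(x,r/2)$ for time $\asymp \Phi(x,r)$). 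Under the extra hypothesis \eqref{e:Phi-lower}, the comparability translates this directly into \eqref{e:scaling-zero-lower}, and Corollary \ref{c:inf} then yields \eqref{eq:Feller2}. I expect the chief technical obstacle to be the upper bound in the exit-time comparability: the maximal inequality gives no information in that direction, and \eqref{O4} together with the sector condition is exactly what furnishes the missing non-degeneracy that makes $X$ leave small balls on the correct time scale.
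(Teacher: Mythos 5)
Your overall architecture matches the paper's proof: reduce to Theorem \ref{inf} and Corollary \ref{c:inf} by verifying \eqref{A1}--\eqref{A4} for $\phi(x,r)=\Phi(x,r)$, with the exit-time comparability $\E^x[\tau_{B(x,r)}]\asymp\Phi(x,r)$ as the pivot, and the reductions you name for \eqref{A1}--\eqref{A3} (Lemma \ref{l:Phi-2}, \eqref{O1}+\eqref{e:Phi-upper}, $J(x,\cdot)\le h(x,r)\asymp\Phi(x,r)^{-1}$) are exactly what the paper does. However, you misallocate the role of \eqref{O4}, and this creates a genuine gap in the half of \eqref{A4} that actually depends on it.

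The two-sided exit-time estimate and the upper Pruitt bounds \eqref{e:Pruitt1}--\eqref{e:Pruitt3} require only \eqref{O1}--\eqref{O3}: the sector condition \eqref{O2} together with \eqref{O3} feeds into the Pruitt-type estimates of \cite[Theorems 5.1, 5.9]{BSW}, yielding both ${\mathbb P}^x(\tau_{B(x,r)}\ge t)\le\exp(-ct/\Phi(x,r)+1)$ and $\E^w[\tau_{B(w,r)}]\asymp\Phi(x,r)$ without any use of \eqref{O4} (this is Lemma \ref{l:Feller0}). In particular the ``more delicate half'' you describe --- the upper bound on $\E^x[\tau_{B(x,r)}]$ and the lower bound on ${\mathbb P}^x(\tau_{B(x,r)}\le t)$ --- does not need \eqref{O4}, and the directional condition, if anything, pushes in the opposite direction since it says the process tends to head back toward the center.

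Where \eqref{O4} is indispensable is precisely in the step you treat as routine: the \emph{lower} exponential tail ${\mathbb P}^x(\tau_{B(x,r)}\ge n\E^x[\tau_{B(x,r)}])\ge C_4e^{-C_5n}$ in \eqref{A4}. You propose to ``iterate the event that the process remains in $B(x,r/2)$,'' but this cannot be closed by the strong Markov property alone: at step $k$ you are conditioned on a position $X_{kt_0}$ that may be arbitrarily close to $\partial B(x,r/2)$, and starting there the probability of remaining in the $x$-centered ball $B(x,r/2)$ for time $t_0$ is not bounded away from zero uniformly. The increments could accumulate outward. The paper's Lemma \ref{l:Feller1} closes this by defining the events $S_k$ that additionally require $2\la X_{(k+1)t_0}-X_{kt_0},X_{kt_0}\ra\le-|X_{(k+1)t_0}-X_{kt_0}||X_{kt_0}|$, i.e.\ each increment has a favorable angle with the current radius, and exploits the elementary geometric fact that if $|y|,|z|<r/2$ and $2\la y,z\ra\le-|y||z|$ then $|y+z|<r/2$, so the cumulative displacement never escapes $B(x,r/2)$. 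It is \eqref{O4} that supplies the uniform lower bound ${\mathbb P}^w(S_0)\ge C_{11}/2$ needed to make the iteration yield an exponential lower tail. Without this directional ingredient, your lower-tail sketch fails, and that is the actual non-trivial content that \eqref{O4} is designed to carry.
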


\begin{remark}\label{r:a2}
	{\rm  Let $\wt \Phi$ be any function on $U \times (0,1)$ such that $\wt \Phi(x,r) \asymp \Phi(x,r)$ for $x \in U$ and $r \in (0,1)$. Thanks to the Blumenthal's zero-one law, the liminf LILs \eqref{eq:Feller1} and \eqref{eq:Feller2} hold true with $\wt \Phi$ instead of $\Phi$. Cf. Remark \ref{r:a}. }
\end{remark}

To prove  Theorem \ref{t:Feller1}, we  need the following  two lemmas.

The first one is a consequence of \cite[Section 5]{BSW}. Since we only put assumptions on the symbol $q$ locally, we carefully check ranges of variables in the proof of the following lemma.

\begin{lem}\label{l:Feller0}
	Suppose that \eqref{O1}, \eqref{O2} and \eqref{O3} hold for an open subset $U \subset {\mathbb R}^d$. Then there exist constants $C_{14}, C_{15}>0$ and $C_{16}>1$ such that for all $x \in U$, $0< \frac{8C_{12}}{C_9} r < R_0 \wedge (C_8 \updelta_U(x))$, $w \in B(x,r)$ and $t>0$,
	\begin{equation}\label{e:Pruitt1}
		{\mathbb P}^w(\tau_{B(w,r)} \le t )  \le  \frac{C_{14}t}{\Phi(x,r)},
	\end{equation}
	\begin{equation}\label{e:Pruitt2}
		{\mathbb P}^w(\tau_{B(w,r)} \ge t ) \le \exp \bigg( -  \frac{C_{15}t}{\Phi(x,r)} +1\bigg),
	\end{equation}
and
\begin{equation}\label{e:Pruitt3}
	C_{16}^{-1} \Phi(x,r) \le 	\E^w[\tau_{B(w,r)}] \le C_{16} \Phi(x,r),
\end{equation}
where $C_9,C_{10}$ and $C_{12}$ are constants in \eqref{O2}, \eqref{O3} and Lemma \ref{l:Phi-1} respectively.
\end{lem}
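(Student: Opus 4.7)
The strategy is to adapt the Fourier-analytic / semigroup approach of \cite{Sc98,BSW}, being careful that the local nature of (O1)--(O3) still lets the arguments go through. A key preliminary observation is that Lemma \ref{l:Phi-2} and (O3) give $\Phi(y,\rho) \asymp \Phi(x,\rho)$ for every $y$ in a neighborhood of $B(x,r)$ and every $\rho \le r$, so estimates phrased in terms of $\Phi(w,r)$ at a point $w \in B(x,r)$ may be transferred to the form involving $\Phi(x,r)$; the numerical factor $8C_{12}/C_9$ in the hypothesis is chosen so that (O2) and (O3) remain applicable at all frequencies $|\xi|$ with $1/|\xi| \asymp r$ and all base points within a slightly enlarged ball around $x$.

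For \eqref{e:Pruitt1}, I would reproduce Pruitt's maximal inequality for Feller processes. Picking a smooth cutoff $f \in C_c^\infty({\mathbb R}^d)$ with $f(w)=0$, $f \equiv 1$ on $B(w,r)^c$, $0 \le f \le 1$, and $\lVert f\rVert_{C^2} \le C r^{-2}$, Dynkin's formula applied at $\tau:=\tau_{B(w,r)}$ gives
\[
  {\mathbb P}^w(\tau \le t) \le {\mathbb E}^w[f(X_{t\wedge\tau})] = {\mathbb E}^w\!\left[\int_0^{t\wedge\tau} \sL f(X_s)\,ds\right].
\]
A standard calculation with the L\'evy--Khinchine representation, using (O2) to absorb the drift contribution into the symmetric part, yields $|\sL f(y)| \le C h(y,r)$, and then Lemmas \ref{l:Phi-1}--\ref{l:Phi-2} convert this bound to $C/\Phi(x,r)$ uniformly in $y$ near $B(w,r)$; integrating in time delivers \eqref{e:Pruitt1}.

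For \eqref{e:Pruitt2}, I would first establish a one-step bound: there are $t_1 \asymp \Phi(x,r)$ and $\delta \in (0,1)$ with ${\mathbb P}^w(\tau_{B(w,r)} \le t_1) \ge \delta$ for all $w \in B(x,r)$. Feeding the generator relation into the characteristic function yields
\[
  \bigl|{\mathbb E}^w[e^{i\la X_t - w,\xi\ra}]\bigr| \le \exp\!\bigl(-c\,t\,\mathrm{Re}\,q(w,\xi)\bigr)
\]
for suitable $\xi$, with the imaginary-part error absorbed by (O2). Choosing $|\xi| \asymp 1/r$ and $t = t_1 := c'\Phi(w,r)$ makes this characteristic function bounded away from $1$, forcing $X_{t_1}-w$ to spread at scale $r$; averaging over $\xi$ in an annulus of radius $\asymp 1/r$ converts this into ${\mathbb P}^w(|X_{t_1}-w|> r/2) \ge \delta$, whence ${\mathbb P}^w(\tau \le t_1) \ge \delta$. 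Applying the strong Markov property at multiples of $t_1$ gives ${\mathbb P}^w(\tau > k t_1) \le (1-\delta)^k$, which after reorganization is exactly \eqref{e:Pruitt2}.

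The estimate \eqref{e:Pruitt3} is then immediate: the lower bound on ${\mathbb E}^w[\tau]$ follows from \eqref{e:Pruitt1} by taking $t = \Phi(x,r)/(2C_{14})$, so that ${\mathbb P}^w(\tau > t) \ge 1/2$ and therefore ${\mathbb E}^w[\tau] \ge t/2$; the upper bound comes from integrating the exponential tail in \eqref{e:Pruitt2}. I expect the main obstacle to be the one-step lower bound inside the proof of \eqref{e:Pruitt2}: one must verify that the Fourier-analytic spread estimate survives using only the local assumptions, which demands tight control over the relevant frequency range, absorption of the drift term via (O2), and exploitation of (O1) to ensure $\mathrm{Re}\,q(w,\xi)$ is genuinely large at $|\xi|\asymp 1/r$.
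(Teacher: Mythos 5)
Your proposal takes a genuinely different route: rather than verifying the technical hypotheses of the cited results \cite[Theorem 5.1, Corollary 5.3, Theorem 5.9]{BSW} and deducing \eqref{e:Pruitt1}--\eqref{e:Pruitt3} as corollaries (which is what the paper does), you propose to rebuild Pruitt's maximal inequalities from scratch via Dynkin's formula and characteristic‑function estimates. The strategies are related, since the BSW theorems are themselves proved with tools of this kind, but your write‑up skips exactly the steps that make the localized setting delicate, and two of those skipped steps are genuine gaps.

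First, the phrase ``using (O2) to absorb the drift contribution'' glosses over the hardest estimate in the paper's proof: bounding $|\mathrm{Im}\,q(y,\xi)|$ for low frequencies $|\xi|\le 4/r_0$, where $r_0 = R_0\wedge(C_8\updelta_U(x))$. Condition \eqref{O2} only applies when $1/|\xi| < r_0$, so it gives no control on $\mathrm{Im}\,q$ in the regime that actually captures the drift $\mathbf{b}(y)$ and the large‑jump compensator. In your Dynkin‑formula bound for $\sL f$, the terms $\langle \mathbf{b}(y),\nabla f(y)\rangle$ and $\nabla f(y)\cdot\int_{r<|z|\le 1}z\,\nu(y,dz)$ are precisely these low‑frequency contributions, and they are not dominated by $h(y,r)$. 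The paper handles this by a telescoping estimate on $\mathrm{Im}\,q(y,\xi)$ (splitting the L\'evy integral at $|z|=r_0/4$ and rescaling $\xi$ to magnitude $4/r_0$ so that \eqref{O2} becomes applicable), obtaining $|\mathrm{Im}\,q(y,\xi)|\le c\,\Phi(x,r)^{-1}$ for \emph{all} $|\xi|\le 1/r$. Without some version of that estimate your bound $|\sL f(y)|\le C/\Phi(x,r)$ does not follow from \eqref{O1}--\eqref{O3}.

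Second, the spread argument in your treatment of \eqref{e:Pruitt2} leans on the inequality
$\bigl|\E^w[e^{i\langle X_t-w,\xi\rangle}]\bigr|\le\exp(-c\,t\,\mathrm{Re}\,q(w,\xi))$.
For a L\'evy process this is an identity, but for a Feller process the symbol varies along the path, so one only gets (after a Gronwall‑ or maximum‑principle‑type argument) a bound involving $\inf_y\mathrm{Re}\,q(y,\xi)$ over a neighborhood, and establishing even that under purely local conditions is nontrivial; it requires essentially the same inputs \eqref{e:estimate-range}, \eqref{e:infsup_q}, \eqref{e:supsup_q} that the paper verifies before invoking \cite{BSW}. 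Moreover, ``$|\widehat\mu(\xi)|$ bounded away from $1$ on an annulus $|\xi|\asymp 1/r$'' only shows that $\mu$ spreads at scale $r$, not that it leaves $B(w,r)$; a measure supported inside $B(0,r/2)$ can have arbitrarily small characteristic function at $|\xi|=1/r$. To extract $\mu(B(0,r)^c)\ge\delta$ you need $|\widehat\mu(\xi)|$ \emph{quantitatively} small (say $\le 1/2$) on an annulus with $|\xi|\le 1/(2r)$, so that the split $\mathrm{Re}(1-\widehat\mu(\xi))\le \tfrac18\mu(B(0,r))+2\mu(B(0,r)^c)$ forces mass outside the ball; this in turn needs $t_1$ to be a sufficiently \emph{large} multiple of $\Phi(x,r)$, not merely comparable to it, and it needs the exponential characteristic‑function bound first. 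The more robust route---the one \cite{BSW} takes---is to obtain the upper bound on $\E^w[\tau_{B(w,r)}]$ (part of \eqref{e:Pruitt3}) directly from Dynkin's formula and then use Markov's inequality and iteration to get \eqref{e:Pruitt2}; your ordering of \eqref{e:Pruitt2} before \eqref{e:Pruitt3} is harder to close cleanly.

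In short: the outline is sensible and close in spirit to how the cited BSW theorems are proved, but the two gaps above are real and would need to be filled. The paper sidesteps them by isolating the three symbol estimates \eqref{e:estimate-range}, \eqref{e:infsup_q}, \eqref{e:supsup_q} and then appealing to \cite{BSW}; the low‑frequency $\mathrm{Im}\,q$ bound is where the actual work is, and your proposal does not contain an argument for it.
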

\begin{proof} Fix $x\in U$. Let $r_0:=R_0 \wedge (C_8 \updelta_U(x))$ and  $k:=2C_{12}/C_9>2$. Note that for all $0<4kr<r_0$ and $w \in B(x,2r)$, by the triangle inequality,
\begin{equation*}
C_8\updelta_U(w) \ge C_8(\updelta_U(x) - r) >r_0 - 2r >3kr.
\end{equation*}
Hence, by Lemma \ref{l:Phi-1} and \eqref{O2}, it holds that for all $0<4kr<r_0$ and $w \in B(x,r)$,
\begin{equation}\label{e:estimate-range}
	\sup_{1/(4kr)\le |\xi| \le 1/(2kr)} \sup_{|y-w| \le r} \frac{\text{Re} \, q(y,\xi)}{|\xi||\text{Im}\,q(y,\xi)|} \ge \frac{2kr}{C_{12}} \frac{	\sup_{|\xi| \le 1/(kr)} \text{Re} \, q(w,\xi)}{	\sup_{|\xi| \le 1/(kr)} |\text{Im}\,q(w,\xi)|} \ge  \frac{2C_9kr}{C_{12}} = 4r.
\end{equation}

 Using \eqref{O3}, Lemma \ref{l:Phi-1} and the monotone property of $\Phi$, we get that for all $0<4kr < r_0$ and  $w \in B(x,r)$,
\begin{align}\label{e:infsup_q}
	&\sup_{1/(4kr)\le |\xi| \le 1/(2kr)} \,	\inf_{|y-w| \le 3r} \text{Re} \, q(y, \xi) \nn\\
	&\ge 	\sup_{1/(4kr)\le |\xi| \le 1/(2kr)} \, 	\inf_{|y-x| < 4r} \text{Re} \, q(y, \xi) \ge C_{10}  \sup_{1/(4kr)\le |\xi| \le 1/(2kr)}  \text{Re} \, q(x, \xi) \ge  \frac{C_{10}}{C_{12}} \Phi(x,r).  
\end{align}

On the other hand, by Lemma \ref{l:Phi-1} and \eqref{O3}, we also get that for all $0<4r < r_0$ and  $w \in B(x,r)$,
\begin{equation}
	\sup_{|y-w|\le r} \, \sup_{|\xi| \le 1/r} \text{Re}\,q(y, \xi) \le C_{12} \sup_{1/(2r) \le |\xi| \le 1/r} \, \sup_{|y-w|\le r} \text{Re}\,q(y, \xi)\le \frac{C_{12}}{C_{10}} \Phi(x,r)^{-1}.
\end{equation}
Moreover, we get from Lemma \ref{l:Phi-2} and \eqref{O2} that for all $0<4kr < r_0$ and  $w \in B(x,r)$,
\begin{align*}
 \sup_{|y-w| \le r} \,\sup_{4/r_0<|\xi|\le 1/r} |\text{Im}\, q(y,\xi)|  \le \frac{1}{C_9}  \sup_{|y-x|<2r}  \,\sup_{|\xi|\le 1/r} |\text{Re}\, q(y,\xi)| \le \frac{C_{13}}{C_9} \Phi(x,r)^{-1}.
\end{align*}
Using the triangle inequality several times, \eqref{O2} in the second inequality, the inequality $|a-\sin a|\le |a|^2$ for $a \in {\mathbb R}$ in the third, Lemma \ref{l:Phi-1} in the fourth, and the monotone property of $\Phi$ and Lemma \ref{l:Phi-2} in the last, we get that for all $y,\xi\in {\mathbb R}^d$ such that $|y-x| < 2r$ and $|\xi| \le 4/r_0$, 
\begin{align*}
	|\,\text{Im} \, q(y,\xi)|  & \le \frac{r_0|\xi|}{4} \big|\,\text{Im} \, q(y,\frac{4}{r_0|\xi|}\xi)\big| + \bigg| \int_{{\mathbb R}^d } \bigg( \frac{r_0|\xi|}{4} \sin \la z, \frac{4}{r_0|\xi|}\xi\ra- \sin \la z, \xi \ra\bigg)\nu(y, dz) \bigg| \\
	& \le  \frac{r_0|\xi|}{4C_9} \Phi(y,4/r_0)^{-1}+ \frac{r_0|\xi|}{4} \int_{|z| < r_0/4 }   \bigg| \la z, \frac{4}{r_0|\xi|}\xi \ra -   \sin \la z, \frac{4}{r_0|\xi|}\xi\ra \bigg| \,\nu(y, dz)    \\
	& \quad   +   \int_{|z| < r_0/4}  \big|\la z, \xi \ra- \sin \la z, \xi \ra \big|\nu(y, dz)   + \bigg( \frac{r_0|\xi|}{4} + 1 \bigg)  \int_{|z| \ge r_0/4} \nu(y, dz)  \\
		& \le  \frac{1}{C_9} \Phi(y,4/r_0)^{-1}+ \bigg(\frac{16}{r_0^2} + \frac{16}{r_0^2}\bigg) \int_{|z| < r_0/4 }  |z|^2\nu(y, dz)   + 2 \nu\big(y,{\mathbb R}^d \setminus B(0, r_0/4) \big)  \\
		& \le c_1 \Phi(y,4/r_0)^{-1} \le c_2 \Phi(x,r)^{-1}.
\end{align*}
Therefore, we deduce that for all $0<4kr<r_0$ and $w \in B(x,r)$,
\begin{align}\label{e:supsup_q}
	\sup_{|y-w|\le r} \sup_{|\xi| \le 1/r} |q(y, \xi)| \le  	\sup_{|y-w|\le r} \Big( \sup_{|\xi| \le 1/r} \text{Re}&q(y, \xi)
	+	 \sup_{4/r_0<|\xi| \le 1/r} |\text{Im}\,q(y, \xi)|\nn\\
	& +  \sup_{|\xi| \le 4/r_0} |\text{Im}\,q(y, \xi)| \Big)\le  c_3\Phi(x,r)^{-1}.
\end{align}

Finally, by \eqref{e:estimate-range}, \eqref{e:infsup_q} and  \eqref{e:supsup_q}, we obtain the results  from \cite[Theorem 5.1, Corollary 5.3 and Theorem 5.9]{BSW}. \end{proof}

\begin{lem}\label{l:Feller1}
	Suppose that \eqref{O1}-\eqref{O4} hold for an open subset $U \subset {\mathbb R}^d$. Then  \eqref{A4} holds  for $U$.
\end{lem}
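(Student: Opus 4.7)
The plan is to obtain the upper half of \eqref{A4} directly from Lemma \ref{l:Feller0} and to establish the lower half by iterating a uniform one-step non-exit estimate built from \eqref{e:Pruitt1} and \eqref{O4}. For the upper bound, substituting $t=n\E^x[\tau_{B(x,r)}]$ into \eqref{e:Pruitt2} and using $\E^x[\tau_{B(x,r)}]\ge C_{16}^{-1}\Phi(x,r)$ from \eqref{e:Pruitt3} immediately gives ${\mathbb P}^x(\tau_{B(x,r)}\ge n\E^x[\tau_{B(x,r)}])\le e\cdot e^{-C_{15}n/C_{16}}$, which has the desired form.

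For the lower bound, I would set $t_0:=c_0\Phi(x,r/4)$ with $c_0>0$ small and aim to show
\begin{equation*}
	\inf_{w\in B(x,r/2)}{\mathbb P}^w\bigl(X_{t_0}\in B(x,r/2),\,\tau_{B(x,r)}>t_0\bigr)\ge p_0
\end{equation*}
for some $p_0>0$ independent of $x$, $w$, and $r$. Iterating this via the strong Markov property yields ${\mathbb P}^x(\tau_{B(x,r)}>Nt_0)\ge p_0^N$, and because $\E^x[\tau_{B(x,r)}]\asymp\Phi(x,r)\asymp\Phi(x,r/4)\asymp t_0$ (by \eqref{e:Phi-upper} and \eqref{e:Pruitt3}), this converts to the required $C_4e^{-C_5n}$.

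To prove the one-step estimate I would split on $|w-x|$. When $|w-x|<r/4$, the event $\{\tau_{B(w,r/4)}>t_0\}$ alone already forces $X_{t_0}\in B(w,r/4)\subset B(x,r/2)$ and $\tau_{B(x,r)}\ge\tau_{B(w,r/4)}>t_0$; by \eqref{e:Pruitt1} applied centered at $w\in U$, with Lemma \ref{l:Phi-2} used to compare $\Phi(w,r/4)$ to $\Phi(x,r/4)$, this probability is at least $1/2$ once $c_0$ is small. When $r/4\le|w-x|<r/2$, set $z=(w-x)/|w-x|$ and intersect $\{\tau_{B(w,r/4)}>t_0\}$ with the \eqref{O4}-event $\{\la X_{t_0}-w,z\ra\le-|X_{t_0}-w|/2\}$; on this intersection $|X_{t_0}-w|\le r/4\le|w-x|$, and the identity
\begin{equation*}
	|X_{t_0}-x|^2=|X_{t_0}-w|^2+2|w-x|\la X_{t_0}-w,z\ra+|w-x|^2
\end{equation*}
yields $|X_{t_0}-x|^2\le|w-x|^2<(r/2)^2$; additionally $B(w,r/4)\subset B(x,3r/4)\subset B(x,r)$ gives $\tau_{B(x,r)}>t_0$. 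A union bound against \eqref{O4} and \eqref{e:Pruitt1} produces probability at least $C_{11}/2$.

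The main obstacle I anticipate is bookkeeping: tracking the admissible range of $r$ through every invocation of Lemma \ref{l:Feller0} at scale $r/4$ centered at arbitrary $w\in B(x,r/2)$, and simultaneously ensuring $t_0<\eps_0\Phi(w,R_0\wedge C_8\updelta_U(w))$ as required by \eqref{O4}. This fixes the final constants $R_0$ and $C_0$ in \eqref{A4}, but no additional conceptual ingredient is needed.
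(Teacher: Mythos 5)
Your proposal is correct and follows essentially the same route as the paper (both are implementations of the scheme in \cite[Proposition 5.2]{GRT19}): the upper bound comes directly from \eqref{e:Pruitt2} combined with \eqref{e:Pruitt3}, and the lower bound is obtained by iterating, via the Markov property, a uniform one-step estimate that combines the small-exit-probability bound \eqref{e:Pruitt1} with the directional condition \eqref{O4} to force $X$ to return into $B(x,r/2)$ at each epoch. The only difference is cosmetic: you split the one-step estimate into the cases $|w-x|<r/4$ and $r/4\le|w-x|<r/2$ (using a ball of radius $r/4$), whereas the paper uses a single event $S_k$ with radius $r/2$ and a one-line algebraic lemma covering both cases at once; your split has the small advantage of sidestepping the degenerate direction $X_{kt_0}=x$, but conceptually the two arguments are the same.
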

\begin{proof}  By following the proof of \cite[Proposition 5.2]{GRT19}, one can  deduce \eqref{A4}  from \eqref{e:Pruitt1},  \eqref{e:Pruitt2} and \eqref{O4}. Below, we give  the proof of this conclusion  in details for the reader's convenience.

Choose any $x \in U$ and set $r_0:= R_0 \wedge (C_8 \delta_U(x))$. For all $w \in B(x, r_0/8)$, since $\updelta_U(w) \ge \updelta_U(x)-r_0/8>\updelta_U(x)/5$, we get from Lemma \ref{l:Phi-2} and \eqref{e:Phi-upper} that 
\begin{equation}\label{e:Feller1-1}
	\Phi\big(w, R_0 \wedge (C_8\updelta_U(w))\big) \ge \Phi(w, r_0/5 ) \ge  C_{13} \Phi(x, r_0/5 ) \ge c_1	\Phi(x,r_0),
\end{equation}
where the constant $c_1>0$ is independent of $x$.

Let $0<\frac{8C_{12}}{C_9}r <r_0$. 
By \eqref{e:Pruitt1} and \eqref{e:Pruitt3}, there is a constant $\eps_1 \in (0,c_1\eps_0/C_{16})$ independent of $x$ and $r$ such that for  all $w \in B(x,r/2)$,
\begin{equation}\label{e:def-unittime}
{\mathbb P}^w(\tau_{B(w,r/2)} \le \eps_1 \E^x[\tau_{B(x,r)}] )  \le \frac{C_{11}}{2},
\end{equation}
where $\eps_0$, $C_{11}$ and  $C_{16}$ are the constants in \eqref{O4} and \eqref{e:Pruitt3}. We set $t_0:= \eps_1 \E^x[\tau_{B(x,r)}]$ and for $k \ge 0$,
$$
S_k:= \left\{ \sup_{kt_0 \le u \le (k+1)t_0}|X_u-X_{kt_0}|<\frac{r}{2}, \, 2\la X_{(k+1)t_0}-X_{kt_0}, X_{kt_0} \ra \le -|X_{(k+1)t_0}-X_{kt_0}| |X_{kt_0}| \right\}.
$$
Note that, for any $y,z \in {\mathbb R}^d$, if $|y|, |z| < r/2$ and $2\la y,z \ra \le -|y||z|$, then  $|y+z| < r/2$. Thus, by the Markov property, we see that for all $n \ge 1$, 
\begin{align}\label{e:SP_low}
	{\mathbb P}^{x}(\tau_{B(x,r)} \ge n t_0) &\ge {\mathbb P}^{x}\Big( \cap_{k=0}^{n-1} S_k\Big) = {\mathbb P}^{x}\E\Big[ \cap_{k=0}^{n-1} S_k \, \big| \, \FF_{(n-1)t_0} \Big] \nn\\
	& \ge \inf_{w \in B(x,r/2)} {\mathbb P}^w(S_0) \cdot {\mathbb P}^{x}\Big( \cap_{k=0}^{n-2} S_k\Big) \ge ... \ge \Big( \inf_{w \in B(x,r/2)} {\mathbb P}^w(S_0) \Big)^n. \qquad
\end{align}
For all $w \in B(x,r/2)$, since $\eps_0 	\Phi\big(w, R_0 \wedge (C_8\updelta_U(w))\big) > c_1 \eps_0 C_{16}^{-1}\E^x[\tau_{B(x,r)}]>t_0$ by \eqref{e:Feller1-1} and \eqref{e:Pruitt3}, using
\eqref{O4} and \eqref{e:def-unittime}, we get that
\begin{align*}
	{\mathbb P}^w(S_0) &\ge 1 -{\mathbb P}^w(2\la X_{t_0}-w, w \ra > -|X_{t_0}-w| |w|)-{\mathbb P}^w(\tau_{B(w, r/2)} \le t_0) \ge \frac{C_{11}}{2}.
\end{align*}
It follows that for all $n \ge 1$,
$$
{\mathbb P}^x(\tau_{B(x,r)} \ge n \E^x[\tau_{B(x,r)}])  = {\mathbb P}^x(\tau_{B(x,r)} \ge n\eps_1^{-1} t_0) \ge \bigg( \frac{C_{11}}{2} \bigg)^{-\eps_1^{-1}n +1}.
$$
On the other hand, by \eqref{e:Pruitt2} and \eqref{e:Pruitt3}, we get that for all $n \ge 1$,
$$
{\mathbb P}^x(\tau_{B(x,r)} \ge n \E^x[\tau_{B(x,r)}]) \le e^{-\frac{C_{15}}{C_{16}} n+1 }.
$$
The proof is complete.  \end{proof}

\noindent \textbf{Proof of Theorem \ref{t:Feller1}.} With a redefined $R_0$, in view of \eqref{e:Pruitt3}, we get \eqref{A1} from Lemma \ref{l:Phi-2}, \eqref{A2} from  \eqref{O1} and \eqref{e:Phi-upper}, \eqref{A3} from Lemma \ref{l:Phi-1}, and \eqref{A4} from Lemma \ref{l:Feller1}. Then by the proof of Theorem \ref{inf}, one can see that  \eqref{e:inf3} below holds for all $x \in U$ with $\Phi(x, \cdot)$ instead of $\phi(x, \cdot)$. Hence, by the Blumenthal's zero-one law, we deduce \eqref{eq:Feller1}. Moreover, if \eqref{e:Phi-lower} also holds true, then we arrive at \eqref{eq:Feller2} by a similar argument to that in the proof of  Corollary \ref{c:inf}. We omit  details here. \qed 

We now give concrete examples of Feller processes which satisfy both   liminf LIL at zero and limsup LIL at zero with help from the paper \cite{CKL}. In the following two examples, we use conditions Tail, E and NDL introduced in \cite{CKL}. See Definitions \ref{d:0}--\ref{d:inf} in  Appendix for their definitions.

\begin{example}\label{E:non}{\bf (Non-symmetric Feller processes)}
	{\rm
		Let $\nu$ be a  nonincreasing nonnegative function on $(0,\infty)$ satisfying $\int_0^\infty (r^{d-1} \land r^{d+1}) \nu(r)dr < \infty$. 
			Define for $r>0$,
		$$\mathcal{G}(r)= 1\bigg/\int_{|z| \ge r} \nu(|z|) dz  \quad \text{and} \quad  \mathcal{H}(r)=1\bigg/ \int_{{\mathbb R}^d } \left( \frac{|z|^2}{r^2} \land 1 \right) \nu(|z|) dz.$$
		Then $\mathcal H \le \mathcal G$, $\mathcal H$ is increasing and $\sH(r)/\sH(s) \le (r/s)^2$ for all $0<s\le r$. We assume that there exist constants $0<\beta_1\le\beta_2 \le 2$ and $c_1,c_2>0$ such that 
		\begin{equation}\label{e:sH-scaling}
		c_1\bigg(\frac{r}{s}\bigg)^{\beta_1}\le 	\frac{\sH(r)}{\sH(s)} \le c_2\bigg(\frac{r}{s}\bigg)^{\beta_2} \quad \text{for all} \;\, 0<s \le r \le 1.
		\end{equation}
	Let $J(z)$ be a nonnegative function on ${\mathbb R}^d$ comparable to $\nu(|z|)$ and  $\kappa(x,z)$  be a Borel function on ${\mathbb R}^d \times {\mathbb R}^d$ such that for some constants $a_1,a_2,a_3>0$ and $\beta \in (0,1)$,
	\begin{equation}\label{e:non-kappa}
		a_1 \le \kappa(x,z) \le a_2 \quad  \text{and} \quad |\kappa(x,z) - \kappa(y,z)| \le a_3 |x-y|^\beta \quad \text{for all} \;\, x,y,z \in {\mathbb R}^d.		
	\end{equation}
In this example, we always 	suppose that  one of the following assumptions holds true:
	
	\smallskip
	
	(P1) \eqref{e:sH-scaling} holds with $\beta_1>1$,
	
	(P2) \eqref{e:sH-scaling} holds with $\beta_2<1$,
	
	(P3) $J(z)=J(-z)$ and $\kappa(x,z)=\kappa(x,-z)$ for all $x,z \in {\mathbb R}^d$.
	
	\medskip
	
\noindent	In each case when (P1), (P2) and (P3) holds, respectively, we consider an operator		
\begin{align*}
	&\sL^\kappa f(x) := 
	\begin{cases}
	\int_{{\mathbb R}^d}  \big( f(x+z) - f(x)  - \1_{|z| \le 1} \langle z, \nabla f(x) \rangle \big) \kappa(x,z) J(|z|) dz,  & \text{if (P1) holds}; \\
 \int_{{\mathbb R}^d}  \big( f(x+z) - f(x) \big) \kappa(x,z) J(|z|)dz,  & \text{if (P2) holds}; \\
\frac{1}{2} \int_{{\mathbb R}^d} \big( f(x+z) + f(x-z) - 2f(x) \big) \kappa(x,z)J(|z|)dz & \text{if (P3) holds}. 
	\end{cases}
					\end{align*}
	According to \cite[Theorem 1.3 and Remark 1.5]{GS19}, if \eqref{e:non-kappa} and one among (P1)-(P3) hold, then 
		there exists a Feller process $X$ on ${\mathbb R}^d$ whose infinitesimal generator is an extension of $(\sL^\kappa, C_c^2({\mathbb R}^d))$. Indeed, the process $X$ is the unique solution to the martingale problem for $(\sL^\kappa, C_c^\infty({\mathbb R}^d))$.	
By Lemma \ref{l:Phi-1} and \eqref{e:non-kappa}, since $J(z)$ is comparable to $\nu(|z|)$,  the symbol $q$ of $X$ satisfies that 
 \begin{equation}\label{e:sH-Phi}
 \sup_{| \xi| \le 1/r} \text{\rm Re} \, q(x, \xi )  \asymp 1/\sH(r)  \quad \text{for} \;\, x \in {\mathbb R}^d, \; r>0
 \end{equation} 

In the followings, we check that $X$ satisfies conditions \eqref{O1}-\eqref{O4} for $U={\mathbb R}^d$. 

First, we note that, by \eqref{e:sH-scaling} and \eqref{e:non-kappa},  $\int_{{\mathbb R}^d} \kappa(x,z) J(z)dz \ge c\int_{{\mathbb R}^d}\nu(|z|)dz= $ $c\lim_{r \to 0} 1/\sH(r)$ $=\infty$  for all $x \in {\mathbb R}^d$. Hence, \eqref{O1} holds  for $U={\mathbb R}^d$.

			When (P3) holds, the symbol $q(x,\xi)$ is a real number for all $x, \xi$ so that  \eqref{O2} for $U={\mathbb R}^d$ immediately follows. 
		We now check \eqref{O2} for  the cases (P2) and (P3) separately. 	
		
Suppose  (P1) holds.  Using the triangle inequality,  Taylor expansion for the sine function and \eqref{e:sH-scaling}, since $\kappa$ is bounded above, $J(z)$ is comparable to $\nu(|z|)$. Since  $\nu(|z|)dz$ is a L\'evy measure on ${\mathbb R}^d$ and $\beta_1>1$, we get that for all $x\in {\mathbb R}^d$ and $\xi \in {\mathbb R}^d$ with $|\xi|\ge 1$,
\begin{align*}
	|\text{Im}\, q(x,\xi) | 	&= \bigg|  \int_{{\mathbb R}^d \setminus\{0\}}  \big( \langle z,\xi \rangle \1_{\{|z| \le 1\}}- \sin \langle z, \xi \rangle \big)  \kappa(x,z)J(z)dz \bigg| \\
	&\le \bigg|  \int_{|z| \le  1} \big( \langle z,\xi \rangle - \sin \langle z, \xi \rangle \big)  \kappa(x,z)J(z)dz \bigg| + \bigg|  \int_{|z| > 1}  \sin \langle z, \xi \rangle   \kappa(x,z)J(z)dz \bigg|\\&\le c_2 \int_{|z| \le  1} \big( (|z||\xi|)^3 \wedge (|z||\xi|)  \big)  \nu(|z|)dz  + c_2   \int_{|z| > 1} \nu(|z|)dz \\
	&\le c_2  \int_{|z| \le 1/|\xi|} (|z||\xi|)^2 \nu(|z|)dz + c_2  |\xi| \int_{1/|\xi|<|z| \le 1} |z|^2 \nu(|z|)dz +c_3 \\
	&\le \frac{c_4}{\sH(1/|\xi|)} + \frac{c_4|\xi|+ c_3\sH(1)}{\sH(1)}  	\le \frac{c_4}{\sH(1/|\xi|)} + \frac{(c_4+ c_3\sH(1)) |\xi|}{c_1 |\xi|^{\beta_1}\sH(1/|\xi|)} \le \frac{c_5}{\sH(1/|\xi|)}.
\end{align*}
Suppose (P2) holds. Using \eqref{e:sH-scaling}, since $\kappa$ is bounded above, $J(z)$ is comparable to $\nu(|z|)$. Using this fact  and $\beta_2<1$,  we see that for all $x \in {\mathbb R}^d$ and $\xi \in {\mathbb R}^d$ with $|\xi|\ge 1$,
\begin{align*}
	&|\text{Im}\, q(x,\xi) | =\bigg|  \int_{{\mathbb R}^d \setminus \{0\} }  \sin \langle z, \xi \rangle  \kappa(x,z)J(z)dz\bigg|\\
	&\le c_6|\xi| \int_{|z|<1/|\xi|}  |z|  \nu(|z|)dz + c_6\int_{|z| \ge 1/|\xi|} \nu(|z|)dz\\
&\le c_6   \sum_{n \ge 1} \int_{2^{-n}/|\xi|\le |z|<2^{-n+1}/|\xi|}  2^{-n+1}  \nu(|z|)dz + \frac{c_6}{\sH(1/|\xi|)}\\
&\le   c_6\sum_{n \ge 1}  \frac{ 2^{-n+1}}{\sH(2^{-n}/|\xi|)} + \frac{c_6}{\sH(1/|\xi|)} \le    \frac{c_2c_6}{\sH(1/|\xi|)} \sum_{n \ge 1}  2^{-n+1+n\beta_2} + \frac{c_6}{\sH(1/|\xi|)}=\frac{c_7}{\sH(1/|\xi|)}.
\end{align*}
Therefore, by \eqref{e:sH-Phi}, we deduce that \eqref{O2} always holds true for $U={\mathbb R}^d$.	

 \eqref{O3}	 
		 immediately
		  follows from  the fact that $\kappa(x,z)$ is bounded above and below by positive constants.
		For \eqref{O4}, we see from  \cite[(84)]{GS19} that the heat kernel $p(t,x,y)$ of $X$  satisfies that for all  $t \le 1$ and $x,y \in {\mathbb R}^d$ with $|x-y| \le \sH^{-1}(t)$,
		$$ p(t,x,y) \ge c_8 \sH^{-1}(t)^{-d}. $$
It follows that for all $t \le 1$, $x \in {\mathbb R}^d$ and $z \in {\mathbb R}^d$ with $|z|=1$,
		\begin{align*}
		&{\mathbb P}^x\big(2\la  X_t- x, z \ra \le -| X_t -x |  \big) \ge \int_{B(x,\sH^{-1}(t)) \cap \{ y : 2\la  y- x, z \ra \le -| y -x |  \}} p(t,x,y) dy \nn\\
		& \ge c_8 \sH^{-1}(t)^{-d} \int_{B(x,\sH^{-1}(t)) \cap \{ y : 2\la  y- x, z \ra \le -| y -x |  \}} dy =  c_8 \int_{B(0,1) \cap \{ y : 2\la  y, z \ra \le -| y|  \}} dy =c_9.
		\end{align*}
		Therefore, \eqref{O4} holds true for $U={\mathbb R}^d$. 
		
	Now, using  \eqref{e:sH-scaling} and \eqref{e:sH-Phi}, we conclude from  Theorem \ref{t:Feller1} and Remark \ref{r:a2} that the liminf LIL at zero \eqref{eq:Feller2} holds true with $U={\mathbb R}^d$ and $\Phi^{-1}(x, t/\log|\log t|)$ replaced by $\sH^{-1}(t/\log|\log t|)$.

		\medskip

			To obtain a limsup LIL at zero for the Feller process $X$,  we also assume that  $\U_{r_1}(\mathcal G, \gamma, c)$
			(in Definition \ref{d:ws})
			 holds for some $\gamma>0$ and $r_1, c\in (0,1)$. Here, we emphasize that $\gamma$ may not be smaller than $2$.	 By \eqref{e:non-kappa}, 
		$\mathrm{Tail}_\infty(\mathcal G,{\mathbb R}^d)$ and  $\mathrm{Tail}_\infty(\sH,{\mathbb R}^d, \le)$
				(in Definition  \ref{d:inf}(i))
		 hold. Moreover, by \cite[Theorem 1.2(4) and Lemma 4.11]{GS19} and our Lemma \ref{l:hkendl},
		$\mathrm{NDL}_{R_0'}(\sH,{\mathbb R}^d)$ (in Definition  \ref{d:inf}(iii)) holds for some $R_0'>0$. 
	 Therefore, using \eqref{e:sH-scaling}, we deduce that  for all $x \in {\mathbb R}^d$, the limsup LIL at zero given in \cite[Theorem 1.11(i-ii)]{CKL} holds true for $X$ with functions $\phi=\sH$ and $\psi=\mathcal G$.
		\qed
	}
\end{example}

 In the following example, we directly check that conditions \eqref{A1}-\eqref{A4}, \eqref{B1}-\eqref{B3} and \eqref{B4+}  hold.
\begin{example}\label{E:KKK19}{\bf (Singular L\'evy measure)}
	{\rm  
	Let $\alpha \in (0,2)$, $d \ge 2$ and ${\mathbb R}_i := \{(x_1,\dots,x_d) \in {\mathbb R}^d: x_j = 0 \mbox{ if } j \neq i  \}$ for $1\le i \le d$. Denote by $e^i$, $1\le i \le d$ the standard unit vectors in ${\mathbb R}^d$.  Define a kernel $J(x,y)$ on ${\mathbb R}^d \times {\mathbb R}^d$ by
		\begin{equation}\label{e:axis}
			J(x,y) = \begin{cases}
				b(x,y)|x-y|^{-1-\alpha}, \;\; &\mbox{if} \;\; y-x \in \cup_{i=1}^d {\mathbb R}_i \setminus \{0\},\\
				0, \;\; &\mbox{otherwise},
			\end{cases}
		\end{equation}
		where  $b(x,y)$ is a symmetric function on ${\mathbb R}^d \times {\mathbb R}^d$ that is bounded between two positive constants. Using this kernel, define a symmetric form $(\EE, \FF)$ on $L^2({\mathbb R}^d;dx)$ as
		\begin{equation*}
			\begin{split}
				\EE(u,v) &= \int_{{\mathbb R}^d} \Big( \sum_{i=1}^d \int_{{\mathbb R}} (u(x+e^i \tau) - u(x) )(v(x+e^i \tau) - v(x)) J(x,x+e^i \tau)d\tau   \Big)dx,\\
				\FF &= \{ u \in L^2({\mathbb R}^d; dx) \, | \,  \EE(u,u) < \infty  \}.
			\end{split}
		\end{equation*}
		According to \cite[Theorem 3.9 and Corollary 4.15]{Xu13}, the above form $(\EE, \FF)$ is a regular Dirichlet form and the associated Hunt process $X$ is a strong Feller process in ${\mathbb R}^d$. Below, we get liminf and limsup LILs for $X$, both at zero and at infinity.
		
	From the definition \eqref{e:axis}, we see that  $\mathrm{Tail}_\infty(r^\alpha, {\mathbb R}^d)$ holds true. Indeed, for all $x \in {\mathbb R}^d$ and $r>0$, $\int_{B(x,r)^c} J(x,dy) = \sum_{i=1}^d \int_{|\tau| \ge r} J(x,x+\tau e^i) d\tau \asymp d\int_{|\tau| \ge r} \tau^{-1-\alpha}d\tau =c r^{-\alpha}$. 
		By \cite[Proposition 4.4 and the proof of Theorem 4.6]{Xu13}, there exist  $c_1,c_2>0$ such that for all $x \in {\mathbb R}^d$, $r,t>0$ and $n \in \N$, 
		 \begin{equation}\label{e:KKK191}
		{\mathbb P}^x(\tau_{B(x,r)} <t) \le c_1tr^{-\alpha}  \quad \text{and} \quad  \;	{\mathbb P}^x(\tau_{B(x,r)} \ge c_2nr^\alpha) \le 2^{-n}.
		\end{equation}
		By \cite[Proposition 4.18]{Xu13}, there exist $c_3,c_4>0$ such that
		\begin{equation}\label{e:LHK-singular}
			p(t,x,y) \ge c_3 t^{-d/\alpha} \quad \text{for all} \;\, t>0, \; x,y \in {\mathbb R}^d \text{ with } |x-y| \le c_4t^{1/\alpha}.
		\end{equation} 
It follows that for all $x \in {\mathbb R}^d \setminus \{0\}$ and $t>0$,
		\begin{equation}\label{e:KKK192}
			{\mathbb P}^x\big(2\la X_t-x, x \ra \le - |X_t-x||x| \big) \ge c_3t^{-d/\alpha} \int_{2\la y-x, x \ra \le - |y-x||x|, \, |y-x| \le c_4t^{1/\alpha}} dy \ge c_3 c_5(d),
		\end{equation}
		for  a constant $c_5(d)>0$ which only depends on the dimension $d$.

		Now, by using the first inequality in  \eqref{e:KKK191} and \eqref{e:KKK192}, one can repeat the proof of Lemma \ref{l:Feller1} and deduce that for all $x \in {\mathbb R}^d$, $r>0$ and $n \in \N$,
		 \begin{equation}\label{e:KKK191-2}
		{\mathbb P}^x(\tau_{B(x,r)} \ge c_6nr^\alpha) \ge c_7e^{-c_8n}
		\end{equation}
	with some  $c_6,c_7,c_8>0$. From the latter inequality in \eqref{e:KKK191} and \eqref{e:KKK191-2}, we get that $\E^x[\tau_{B(x,r)}]$ $\asymp r^\alpha$ for $x \in {\mathbb R}^d$ and $r>0$, and conditions \eqref{A4} with $U={\mathbb R}^d$ and \eqref{B4} hold true. Consequently, all conditions \eqref{A1}-\eqref{A3} (with $U={\mathbb R}^d$) and \eqref{B1}-\eqref{B3} are satisfied since we already checked that $\mathrm{Tail}_\infty(r^\alpha, {\mathbb R}^d)$  holds true. Moreover, using H\"older continuity of the heat kernel given in \cite[Corollary 4.19]{Xu13} and \eqref{e:LHK-singular}, one can repeat the proof of \cite[Proposition 4.15]{CKL} and deduce that the zero-one law for shift-invariant events stated in Proposition \ref{p:law01} holds true.
	
	Eventually, from Corollaries \ref{c:inf} and \ref{c:infinf}, and Remark \ref{r:a},  
	we conclude that for all $x,y \in {\mathbb R}^d$, both liminf LILs \eqref{c1} and \eqref{2} hold with $\phi(x,r)=\phi(r) = r^\alpha$. Also, we  conclude from \cite[Theorems 1.11-1.12]{CKL} that the limsup LILs \cite[(1.12) and (1.15)]{CKL} hold with $\phi(r)=r^\alpha$. 
	\qed
	}
\end{example}

Using  { \textit{the local symmetrization} } which has been introduced in \cite{SW13}, we can obtain  a sufficient condition for \eqref{O4} in terms of the symbol $q(\cdot,\xi)$. We introduce the following condition:

\medskip

\setlength{\leftskip}{3mm}

\noindent {\bf (S)}  $C_c^\infty({\mathbb R}^d)$ is an operator core for  $(\sL, \sD(\sL))$, i.e.   $\overline{\sL|_{C_c^\infty({\mathbb R}^d)}}=\sL$, and there exist  constants  $R_0,  A_0\in (0,1)$, $K_0 \ge 1$ and $c_L,c_U>0$ such  that the following conditions hold for every $x \in U$:

\begin{enumerate}[(i)]
	\item There exists an increasing function $g(x,\cdot)$ and constants $0<\alpha(x)\le \beta(x)$  such that 
	\begin{equation}\label{e:alphabeta}
		\frac{1}{\alpha(x)}-\frac{1}{\beta(x)} < \frac{1}{d^2+d},
	\end{equation}
\begin{equation}\label{e:g-scaling}
c_L \bigg(\frac{r}{s} \bigg)^{\alpha(x)} \le 	\frac{g(x,r)}{g(x,s)} \le c_U \bigg(\frac{r}{s} \bigg)^{\beta(x)} \quad \text{for all} \;\; r \ge s> 1/(R_0 \wedge (A_0 \updelta_U(x))).
\end{equation}
	and
	\begin{equation}\label{e:g-comp}
		K_0^{-1}g(x,|\xi|) \le \text{Re}\, q(x, \xi) \le K_0 g(x,|\xi|)\; \text{ for all }  \xi \in {\mathbb R}^d, \; |\xi|>1/(R_0 \wedge (A_0 \updelta_U(x))).
	\end{equation}
	\item For every $0<r<R_0 \wedge (A_0 \updelta_U(x))$, there exists  a Feller process $Y=Y^{x,r}$ with symbol $q_Y(\cdot,\xi)$ such that
	\begin{align}
		&(a) \;  q(y, \xi)=2 \text{Re}\,q_Y(y,\xi/2) \; \text{ for all } y \in \overline{B(x,r)} \text{ and } \xi\in {\mathbb R}^d,\label{a:as2}\\[5pt]
		&(b) \; K_0^{-1} \inf_{|z-x| \le r} \textrm{Re}\,q_Y(z,\xi) \le  \textrm{Re}\,q_Y(y,\xi) \le K_0 \sup_{|z-x|\le r} \textrm{Re }q_Y(z,\xi) \\
		&\quad\;\; \text{for all } y \in {\mathbb R}^d \setminus \overline{B(x,r)} \text{ and } \xi\in {\mathbb R}^d, \; |\xi|> 1/(R_0 \wedge (A_0 \updelta_U(x))). \label{a:as3}
	\end{align}
\end{enumerate}

The condition {\bf (S)} looks complicated but it is quite straightforward to check when some concrete form of $q(x,\xi)$ is given. See Examples \ref{E:Feller} and \ref{E:singular} below.

\begin{remark}\label{r:well-posed}
	{\rm The assumption that $C_c^\infty({\mathbb R}^d)$ is an operator core for $(\sL, \sD(\sL))$ is equivalent to the well-posedness of the martingale problem for $(-q(\cdot, D), C_c^\infty({\mathbb R}^d))$.	See \cite[Proposition 4.6]{SW13}.
	}
\end{remark}

\begin{lem}\label{l:(S)}
	Suppose that \eqref{e:g-comp} holds. Then there exists a constant $K_1>1$ such that
	\begin{equation*}
	\frac{K_1^{-1}}{\Phi(x,r)} \le g(x,1/r) \le 	\frac{K_1}{\Phi(x,r)}\; \text{ for all } \; x\in U \; \text{and } \; 0<2r<R_0 \wedge (A_0 \updelta_U(x)).
	\end{equation*}
\end{lem}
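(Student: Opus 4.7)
The plan is to combine the two-sided comparison between $\Phi(x,r)^{-1}$ and the normalized symbol-supremum from Lemma \ref{l:Phi-1} with the pointwise comparison of $\text{Re}\,q(x,\xi)$ and $g(x,|\xi|)$ coming from hypothesis \eqref{e:g-comp}. The crucial preliminary observation is that the condition $0<2r<R_0\wedge (A_0\updelta_U(x))=:R$ forces $1/(2r)>1/R$, so that every frequency $\xi$ with $|\xi|\ge 1/(2r)$ already lies in the range where \eqref{e:g-comp} is applicable.

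For the lower bound $g(x,1/r)\ge K_1^{-1}/\Phi(x,r)$, I would start from the right half of Lemma \ref{l:Phi-1}, which together with the definition of $\Phi$ yields
\begin{equation*}
\frac{1}{\Phi(x,r)} \;=\; \sup_{|\xi|\le 1/r}\text{Re}\,q(x,\xi) \;\le\; 2h(x,r) \;\le\; C_{12}\sup_{1/(2r)\le |\xi|\le 1/r}\text{Re}\,q(x,\xi).
\end{equation*}
On this annulus, every $\xi$ satisfies $|\xi|>1/R$, so \eqref{e:g-comp} applies and, using the monotonicity of $g(x,\cdot)$ from {\bf (S)}(i), $\text{Re}\,q(x,\xi)\le K_0\,g(x,|\xi|)\le K_0\,g(x,1/r)$. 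Taking the supremum gives $1/\Phi(x,r)\le C_{12}K_0\,g(x,1/r)$.

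For the upper bound $g(x,1/r)\le K_1/\Phi(x,r)$, I would simply pick any single $\xi$ with $|\xi|=1/r$. Since $1/r>2/R>1/R$, \eqref{e:g-comp} applies, so $K_0^{-1}g(x,1/r)\le \text{Re}\,q(x,\xi)\le \sup_{|\xi'|\le 1/r}\text{Re}\,q(x,\xi')=1/\Phi(x,r)$. Combining the two inequalities, the lemma holds with $K_1:=C_{12}K_0$. There is no real obstacle here; the only subtlety is to notice that the restriction $2r<R$ (rather than $r<R$) is exactly what makes the lower half of the annulus $[1/(2r),1/r]$ lie in the region where \eqref{e:g-comp} is valid, which is what lets Lemma \ref{l:Phi-1} do all the heavy lifting.
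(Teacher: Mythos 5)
Your proposal is correct and matches the paper's proof essentially line for line: both sides of the estimate come from Lemma \ref{l:Phi-1} combined with \eqref{e:g-comp} and the monotonicity of $g(x,\cdot)$, with the observation that the restriction $2r<R_0\wedge(A_0\updelta_U(x))$ places the relevant annulus of frequencies $1/(2r)\le|\xi|\le 1/r$ inside the region where \eqref{e:g-comp} is valid. The resulting constant $K_1=C_{12}K_0$ is the same one the paper implicitly produces.
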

\begin{proof} Using \eqref{e:g-comp}, Lemma \ref{l:Phi-1} and the monotonicity of $g$, we get that for all $x \in U$ and $0<2r<R_0 \wedge (A_0 \updelta_U(x))$,
\begin{equation*}
\frac{1}{\Phi(x,r)} \ge 	\sup_{|\xi|=1/r}\text{Re} \, q(x, \xi) \ge K_0^{-1} g(x,1/r)
\end{equation*}
and
\begin{equation*}
	\frac{1}{\Phi(x,r)} \le C_{12} \sup_{1/(2r) \le |\xi|\le 1/r}\text{Re} \, q(x, \xi) \le  C_{12}K_0 g(x,1/r).
\end{equation*}\end{proof}

\begin{prop}\label{p:FellerO4}
	Suppose that  \eqref{O3} and {\bf (S)}  hold. Then  \eqref{O1}, \eqref{O2} and  \eqref{O4}  hold.
\end{prop}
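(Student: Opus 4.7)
The three conditions will be verified in turn. For \eqref{O1}, fix $x \in U$: the scaling bound \eqref{e:g-scaling} with $\alpha(x) > 0$ forces $g(x, 1/r) \to \infty$ as $r \to 0$, and Lemma~\ref{l:(S)} then gives $\Phi(x, r) \to 0$. For \eqref{O2}, evaluating \eqref{a:as2} at $y = x$ yields $q(x, \xi) = 2\,\mathrm{Re}\,q_Y(x, \xi/2) \in \mathbb{R}$ for every $\xi$, so $\mathrm{Im}\,q(x, \xi) \equiv 0$ and the condition holds trivially with any constant $C_9$.

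For the substantive condition \eqref{O4}, fix $x \in U$ and set $r_0 := R_0 \wedge (A_0 \updelta_U(x))$. The plan is to derive a near-diagonal heat kernel lower bound for the part process $X^{B(x, r)}$ with $r \ll r_0$ and then integrate it over a cone of positive solid angle. Let $Y = Y^{x, r_0/2}$ be the locally symmetrized Feller process from {\bf (S)}. The first step is a heat kernel lower bound for $Y^{B(x, r)}$: conditions \eqref{e:g-comp} and \eqref{a:as3} supply the comparability of $\mathrm{Re}\,q_Y$ needed to run analogues of Lemmas~\ref{l:Phi-1}, \ref{l:Phi-2} and \ref{l:Feller0} for $Y$, while the scaling \eqref{e:g-scaling} together with the dimensional restriction \eqref{e:alphabeta} enables a Nash-type inequality (in the spirit of \cite{CKW16b}, \cite{GS19}) to yield
\[
p^{B(x, r)}_Y(t, y, w) \;\ge\; \frac{c_1}{V\bigl(x, \Phi_Y^{-1}(x, t)\bigr)} \qquad \text{for } |y - w| \le c_2 \Phi_Y^{-1}(x, t),
\]
for $t$ comparable to $\Phi_Y(x, r)$. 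The second step is the transfer to $X$: by \eqref{a:as2}, the rescaled process $\tilde Y_t := 2 Y_{t/2}$ has symbol $2\,\mathrm{Re}\,q_Y(\cdot, \cdot/2)$ on $\overline{B(x, r_0/2)}$, which coincides with $q$ there; since $C_c^\infty(\mathbb{R}^d)$ is an operator core for $\sL$ (see Remark~\ref{r:well-posed}), the killed processes $X^{B(x, r)}$ and $\tilde Y^{B(x, r)}$ are identical in law, and the bound transfers to $p^{B(x, r)}(t, y, w)$ after absorbing the rescaling into $\Phi$ via Lemma~\ref{l:(S)}.

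The last step is a direct cone computation. For $|z| = 1$ and $0 < t < \varepsilon_0 \Phi(x, r_0)$ with $\varepsilon_0$ sufficiently small so that the near-diagonal regime of the previous step applies with some $r<r_0/2$, integrate the lower bound over the set $B\bigl(x, c_2 \Phi^{-1}(x, t)\bigr) \cap \{ y : 2\la y - x, z\ra \le -|y - x| \}$, whose relative volume in $B(x, c_2 \Phi^{-1}(x, t))$ is a dimensional constant $c_3(d) > 0$. This produces ${\mathbb P}^x(2\la X_t - x, z\ra \le -|X_t - x|) \ge c_1 c_3(d) =: C_{11}$, uniformly in $t$ and $z$, completing the verification of \eqref{O4}. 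The main obstacle is the first step above: extracting a near-diagonal heat kernel lower bound for $Y^{B(x, r)}$ purely from the structural scaling hypotheses of {\bf (S)}, where \eqref{e:alphabeta} is the sharp dimensional condition that permits a Nash-type argument to succeed; the tracking of the rescaling $\xi \mapsto \xi/2$ through the symmetrization relation \eqref{a:as2} is routine but must be carried out carefully.
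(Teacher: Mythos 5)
Your handling of \eqref{O1} and \eqref{O2} is correct and matches the paper exactly. For \eqref{O4}, however, your plan diverges substantially from the paper's argument and contains gaps that I do not think can be repaired along the route you propose.

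The central technical device in the paper is the \emph{local symmetrization} $Y^S_t := \tfrac{1}{2}(Y_t + 2Y'_0 - Y'_t)$, where $Y'$ is an independent copy of $Y = Y^{x_0,2r_1}$. By \cite[Lemma 2.8]{SW13}, $Y^S$ has symbol $2\,\mathrm{Re}\,q_Y(\cdot,\xi/2)$ — which by \eqref{a:as2} coincides with $q(\cdot,\xi)$ on $\overline{B(x_0,2r_1)}$ — \emph{and} its characteristic function $\lambda_t(y,\xi)$ is nonnegative. Your replacement, the rescaled process $\tilde Y_t := 2Y_{t/2}$, does not produce the right symbol: for a space-inhomogeneous process, $2Y_{t/2}$ has symbol $\tfrac12 q_Y(y/2, 2\xi)$, not $2\,\mathrm{Re}\,q_Y(y,\xi/2)$, because the position variable gets rescaled as well. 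More importantly, a deterministic rescaling of $Y$ cannot symmetrize it, so you lose the nonnegativity of $\lambda_t$, which is precisely what powers the subsequent estimates: the paper uses Fourier inversion of the nonnegative $\lambda_{t_1}(x_0,\cdot)$ to get an explicit density $p_S$ for $Y^S$, then proves an on-diagonal lower bound $p_S(t_1,x_0,x_0)\ge c\,\eta_0^{-d/\beta}r_1^{-d}$ together with a gradient-type bound $|p_S(t_1,x_0,x_0)-p_S(t_1,x_0,x_0+y)|\le c|y|\eta_0^{-(d+1)/\alpha}r_1^{-(d+1)}$. No Nash inequality and no Dirichlet heat kernel lower bound for a killed process is involved; the estimates come directly from the two-sided scaling of $g$, \eqref{a:as3}, and Lemmas~\ref{l:(S)}, \ref{l:Phi-2}. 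The well-posed stopped martingale problem is then used only to couple $X$ and $Y^S$ on $\{t_1<\tau_{B(x_0,r_1)}\}$, and \eqref{e:Pruitt1} controls the error.

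A second gap: you claim a near-diagonal lower bound for $p^{B(x,r)}$ on a ball of radius $\sim \Phi^{-1}(x,t)$. The paper's argument only establishes a lower bound on the \emph{much smaller} ball of radius $\sim \eta_0^{(d+1)/\alpha - d/\beta}r_1$, and this is exactly why \eqref{e:alphabeta} enters: the final bound reads ${\mathbb P}^{x_0}(\dots)\ge \eta_0\bigl(c\,\eta_0^{-1+(d^2+d)(1/\alpha-1/\beta)}-C_{14}\bigr)$, and the condition $(d^2+d)(1/\alpha-1/\beta)<1$ ensures the $\eta_0$-power is negative, so that taking $\eta_0$ small makes the bracket positive. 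This is a purely algebraic constraint on the final estimate, not a dimensional threshold for a Nash-type inequality, and the argument does not need the lower bound to hold on a ball of the natural scaling size. If you want to obtain the stronger near-diagonal bound at scale $\Phi^{-1}(x,t)$ from the hypotheses of \textbf{(S)} alone, you would need a genuinely new argument (e.g. a parabolic Harnack inequality), which the paper deliberately avoids.
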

\begin{proof} The first inequality in \eqref{e:g-scaling} implies that $\lim_{r \to \infty} g(x,r)=\infty$ for all $x \in U$. Hence,  we get \eqref{O1} from Lemma \ref{l:(S)}. \eqref{O2} is obvious because $q(x,\xi)$ is real for all $x \in U$ and $\xi \in {\mathbb R}^d$ by \eqref{a:as2}. Thus it remains to prove \eqref{O4}.

Fix $x_0 \in U$ and set $r_0:=8^{-1} (R_0 \wedge (A_0 \updelta_U(x_0))$.   
Let $\eta_0 \in (0,1)$ be a constant which will be chosen later. Pick  any $0<t_1< \eta_0\Phi(x_0, r_0)$  and then define $r_1=\Phi^{-1}(x_0,\eta_0^{-1}t_1) \in (0, r_0)$. Let $Y=Y^{x_0,2r_1}$ be a Feller process on ${\mathbb R}^d$ satisfying \eqref{a:as2} and \eqref{a:as3} with $x=x_0$ and $r=2r_1$. Denote by  $Y'$  an independent copy of $Y$ and set  $ Y^S_t:=\frac{1}{2} (Y_t+2Y'_0-Y'_t)$. Then according to \cite[Lemma 2.8]{SW13}, $Y^S$ is a Feller process with symbol $2\text{Re}\,q_Y(\cdot, \xi/2)$ and its characteristic function $\lambda_t(y,\xi):=\E^{y}[e^{i\la Y^S_t-y, \xi\ra}]$
is nonnegative for every $t \ge 0$ and $y,\xi \in {\mathbb R}^d$. 

Since the martingale problem for $(-q(\cdot, D), C_c^\infty({\mathbb R}^d))$ is well-posed (Remark \ref{r:well-posed}),  by \cite[Theorem 5.1]{Hoh},  the stopped martingale problem for $(-q(\cdot, D), C_c^\infty({\mathbb R}^d))$ and $B(x_0, 2r_1)$  is also well-posed. Therefore, by constructing $X$ and $Y^S$ in the same probability space, we may assume that $X_s$ and $Y^S_s$ have the same distribution for $0\le s<\tau_{B(x_0,r_1)}$ under ${\mathbb P}^{x_0}$.  
Then using \eqref{e:Pruitt1}, we get that for all $z \in {\mathbb R}^d$ with $|z|=1$,
\begin{align}\label{e:O4-main}
	{\mathbb P}^{x_0}\big(2\la  X_{t_1}- x_0, z \ra \le -| X_{t_1} -x_0 |  \big) &\ge 	{\mathbb P}^{x_0}\big(2\la  Y^S_{t_1}- x_0, z \ra \le -| Y^S_{t_1} -x_0 |, \; t_1<\tau_{B(x_0,r_1)} \big) \nn\\
	& \ge 	{\mathbb P}^{x_0}\big(2\la  Y^S_{t_1}- x_0, z \ra \le -| Y^S_{t_1} -x_0 |  \big) - {\mathbb P}^{x_0} \big( \tau_{B(x_0, r_1)} \le t_1\big)\nn\\
		& \ge 	{\mathbb P}^{x_0}\big(2\la  Y^S_{t_1}- x_0, z \ra \le -| Y^S_{t_1} -x_0 |  \big) - C_{14} \eta_0.
\end{align}

 For simplicity, we denote $\alpha$ for $\alpha(x_0)$ and $\beta$ for  $\beta(x_0)$. Using \eqref{a:as3}, \eqref{e:g-comp}, \eqref{a:as2}, \eqref{e:g-scaling}, and Lemmas \ref{l:(S)} and \ref{l:Phi-2}, we get that  for all $u<2r_1$,
\begin{align}\label{e:H-W-condition}
	&\inf_{z \in {\mathbb R}^d} \inf_{|\xi|=1/u} \text{Re} \,q_Y(z, \xi) \ge \frac{1}{K_0} \inf_{z \in B(x_0, 2r_1)} \inf_{|\xi|=1/u} \text{Re} \,q_Y(z, \xi)  \ge\frac{1}{2K_0^2} \inf_{z \in B(x_0, 2r_1)}  g(z, 2/u) \nn \\
	&\ge \frac{c_L}{2K_0^2} \left( \frac{2r_1}{u}\right)^\alpha\inf_{z \in B(x_0, 2r_1)} g(z, 1/r_1)  \ge  \frac{c_L}{2K_0^2K_1} \left( \frac{2r_1}{u}\right)^\alpha\inf_{z \in B(x_0, 2r_1)}\frac{1}{\Phi(z, r_1)}\nn\\
	& \ge   \frac{c_LC_{13}}{2K_0^2K_1} \left( \frac{2r_1}{u}\right)^\alpha \frac{1}{\Phi(x_0, r_1)} = \frac{c_LC_{13}}{2K_0^2K_1}  \left( \frac{2r_1}{u}\right)^\alpha \frac{\eta_0}{t_1}.
\end{align}
In particular, we have
\begin{equation*}
	\lim_{|\xi| \to \infty} \frac{\inf_{z \in{\mathbb R}^d} \text{Re} \,q_Y(z, \xi) }{\log(1+|\xi|)} \ge  c_1   \lim_{u \to 0}\frac{ u^{-\alpha} }{\log(1+1/u)} =\infty.
\end{equation*}
Thus, by \cite[Theorem  1.2]{SW13} and the Fourier inversion theorem, $Y^S$ has a transition density function  $p_S(t,x,y)$ which is given by
\begin{align*}
	p_S(t,x,y) = (2\pi)^{-d} \int_{{\mathbb R}^d} e^{-i\la \xi, y-x\ra} \lambda_t(x,\xi) d \xi, \quad t>0,\; x,y \in {\mathbb R}^d.
\end{align*}
By \cite[Theorem 2.7]{SW13} and \eqref{e:H-W-condition}, we see that for all $y \in {\mathbb R}^d$,
\begin{align*}
	&	|p_S(t_1,x_0,x_0)-p_S(t_1,x_0,x_0+y)|\\
	&\le (2\pi)^{-d} \int_{{\mathbb R}^d} |1-e^{-i\la \xi, y\ra} | \lambda_{t_1}(x_0,\xi) d \xi \le (2\pi)^{-d} |y| \int_{{\mathbb R}^d} |\xi| \lambda_{t_1}(x_0,\xi) d \xi\\
	& \le (2\pi)^{-d} |y| \int_{{\mathbb R}^d} |\xi| \exp \bigg(-\frac{t_1}{8} \inf_{z \in {\mathbb R}^d} \text{Re} \, q_Y(z,\xi)\bigg) d \xi\\
	&\le (2\pi)^{-d} |y| \bigg( \int_{|\xi| \le \eta_0^{-1/\alpha}r_1^{-1}} |\xi| d \xi+ \int_{|\xi| >\eta_0^{-1/\alpha}r_1^{-1}} |\xi| \exp \bigg(-\frac{t_1}{8} \inf_{z \in {\mathbb R}^d} \text{Re} \, q_Y(z,\xi)\bigg) d \xi\bigg) \\
	& \le c_2 |y| \bigg(\eta_0^{-(d+1)/\alpha}r_1^{-(d+1)}+ \int_{\eta_0^{-1/\alpha}r_1^{-1}}^\infty s^d \exp \big( - c_3 \eta_0 r_1^\alpha s^\alpha \big) ds\bigg).
\end{align*}
Using the inequality  $e^{-s} \le r^r s^{-r}$ for all $s,r>0$, we obtain
\begin{align*}
	\int_{\eta_0^{-1/\alpha}r_1^{-1}}^\infty  s^d \exp \big( - c_3 \eta_0 r_1^\alpha s^\alpha \big) ds \le c_4\eta_0^{-(d+2)/\alpha}r_1^{-(d+2)}\int_{\eta_0^{-1/\alpha}r_1^{-1}}^\infty s^{-2} ds=c_5 \eta_0^{-(d+1)/\alpha}r_1^{-(d+1)}.
\end{align*}
Therefore, we deduce that
\begin{equation}\label{e:pY-gradient}
	|p_S(t_1,x_0,x_0)-p_S(t_1,x_0,x_0+y)| \le c_6 |y| \eta_0^{-(d+1)/\alpha}r_1^{-(d+1)} \quad \text{for all} \;\; y \in {\mathbb R}^d.
\end{equation}
On the other hand, similar to \eqref{e:H-W-condition}, using \eqref{a:as3}, \eqref{e:g-comp}, \eqref{a:as2}, \eqref{e:g-scaling} and Lemmas \ref{l:(S)} and \ref{l:Phi-2}, we get that  for all $u<2r_1$,
\begin{align*}
	\sup_{z \in {\mathbb R}^d} \sup_{|\xi|=1/u} 2 \text{Re} \, q_Y(z, \xi/2)& \le K_0^2 \sup_{z \in B(x_0,2r_1)} g(z, 1/u) \le   c_U K_0^2 \bigg( \frac{r_1}{u}\bigg)^\beta \sup_{z \in B(x_0,2r_1)} g(z, 1/r_1) \\
	& \le  c_U K_0^2 K_1 \bigg( \frac{r_1}{u}\bigg)^\beta \sup_{z \in B(x_0,2r_1)} \frac{1}{\Phi(z, r_1)} \\
	&\le \frac{c_U K_0^2 K_1}{C_{13}} \bigg( \frac{r_1}{u}\bigg)^\beta \frac{1}{\Phi(x_0, r_1)} =  \frac{c_U K_0^2 K_1}{C_{13}} \bigg( \frac{r_1}{u}\bigg)^\beta  \frac{\eta_0}{t_1}.
\end{align*}
Put $c_7:= c_U K_0^2 K_1/C_{13}>1$. By taking $\eta_0$ small enough, we may assume $4c_7\eta_0<1$. Then by the second display in \cite[p.3265]{SW13}, it holds that for all $2^{-1}r_1^{-1}<|\xi|<(4c_7\eta_0)^{-1/\beta}r_1^{-1}$,
\begin{equation*}
	\text{Re} \, \lambda_{t_1}(x_0, \xi) \ge 1- 2t_1 \sup_{z \in {\mathbb R}^d} 2\text{Re}\, q_Y(z, \xi/2) \ge 1-2c_7\eta_0r_1^\beta |\xi|^{\beta} \ge 2^{-1}.
\end{equation*}
Since $\lambda_{t_1}(x_0,\xi)\ge 0$ for every $\xi\in {\mathbb R}^d$, it follows that
\begin{align*}
	p_S(t_1, x_0,x_0) \ge (2\pi)^{-d} \int_{2^{-1}r_1^{-1}<|\xi|<(4c_7\eta_0)^{-1/\beta}r_1^{-1}} 2^{-1} d\xi \ge c_8 \eta_0^{-d/\beta}r_1^{-d}.
\end{align*}
Combining with \eqref{e:pY-gradient}, we obtain that for all $y \in {\mathbb R}^d$ with $ |y| \le 2^{-1}c_6^{-1}c_8 \eta_0^{ - d/\beta + (d+1)/\alpha} r_1$,
\begin{equation*}
	p_S(t_1,x_0, x_0+y) \ge c_8 \eta_0^{-d/\beta} r_1^{-d} -2^{-1}c_8 \eta_0^{-d/\beta} r_1^{-d} =2^{-1}c_8 \eta_0^{-d/\beta} r_1^{-d} .
\end{equation*}
Therefore, it holds that for every $z \in {\mathbb R}^d$ with $|z|=1$,
\begin{align*}
	&{\mathbb P}^{x_0}\big(2\la  Y^S_{t_1}- x_0, z \ra \le -| Y^S_{t_1} -x_0 |  \big)  \ge \int_{ 2 \la y, z\ra \le -|y|, \, |y| \le 2^{-1}c_6^{-1}c_8 \eta_0^{ - d/\beta + (d+1)/\alpha} r_1} p_S(t_1, x_0, x_0+y) dy\\
	& \ge  2^{-1}c_8 \eta_0^{-d/\beta} r_1^{-d}  \int_{ 2 \la y, z\ra \le -|y|, \, |y| \le 2^{-1}c_6^{-1}c_8 \eta_0^{ - d/\beta + (d+1)/\alpha} r_1} dy \\
	&=2^{-1}c_8 \eta_0^{-d/\beta} r_1^{-d}  \big(2^{-1}c_6^{-1}c_8 \eta_0^{ - d/\beta + (d+1)/\alpha} r_1 \big)^d \int_{ 2 \la y, z\ra \le -|y|, \, |y| \le 1} dy= c_9  \eta_0^{(d^2+d)(1/\alpha - 1/\beta)}
\end{align*}
and hence by \eqref{e:O4-main}, 
$$
	{\mathbb P}^{x_0}\big(2\la  X_{t_1}- x_0, z \ra \le -| X_{t_1} -x_0 |  \big) \ge  \eta_0 \big(  c_9\eta_0^{-1+(d^2+d)(1/\alpha - 1/\beta)} - C_{14} \big).
$$
Note that the above constants $c_9$ and $C_{14}$ are independent of $x_0$ and $t_1$. In view of \eqref{e:alphabeta}, we conclude \eqref{O4} by taking $\eta_0$  sufficiently small. \end{proof}

Below, we give two concrete examples. In the following  examples, we assume that   $U \subset {\mathbb R}^d$, $d \ge 1$ is an open set and $C_c^\infty({\mathbb R}^d)$ is an operator core for  the generator of the Feller process $X$.

\begin{example}\label{E:Feller}
	{\rm {\bf (Symbols of varying order)} Suppose that 	 there are  H\"older continuous functions $\alpha:U \to (0,2)$ and $\gamma:U \to (-1,1)$   such that  $\inf_{x \in U} \alpha(x)>0$, $ \alpha(x)/2 + \gamma(x) \in [0,1]$  for all $x \in U$, and that
		$$
		q(x, \xi)=|\xi|^{\alpha(x)}(\log (1+|\xi|))^{\gamma(x)} \quad \text{for all} \;\; x \in U, \; \xi \in {\mathbb R}^d.
		$$

		By H\"older continuities of $\alpha(x)$ and $\gamma(x)$, there exist constants  $c_1>0$ and $\theta \in (0,1]$ such that $|\alpha(x)-\alpha(y)|+|\gamma(x)-\gamma(y)|\le c_1|x-y|^\theta$ for all $x,y \in U$. Since $\lim_{r \to 0} r^{c_1 r^\theta}=\lim_{r \to 0} (\log(1+1/r))^{-c_1 r^\theta}=1$, we see that	for all $x \in U$ and $\xi\in {\mathbb R}^d$ with $r:=1/|\xi|<1 \wedge \updelta_U(x)$,  
		\begin{align*}
			\inf_{|x-y| \le r} \text{Re} \, q(y,\xi) &\ge  r^{-\alpha(x)}(\log (1+1/r))^{\gamma(x)}  	\inf_{|x-y| \le r} r^{\alpha(x)-\alpha(y)} (\log(1+1/r))^{\gamma(y)-\gamma(x)}  \\
			&\ge  r^{-\alpha(x)}(\log (1+1/r))^{\gamma(x)}    r^{ c_1 r^\theta} (\log(1+1/r))^{-c_1 r^\theta} \\[6pt]
			&\ge c_2r^{-\alpha(x)}(\log (1+1/r))^{\gamma(x)}   
		\end{align*}
		and 
		\begin{align*}
			\sup_{|x-y| \le r} \text{Re} \, q(y,\xi) 	&\le  r^{-\alpha(x)}(\log (1+1/r))^{\gamma(x)}    r^{ -c_1 r^\theta} (\log(1+1/r))^{c_1 r^\theta}
			\\
			& \le c_3r^{-\alpha(x)}(\log (1+1/r))^{\gamma(x)}.
		\end{align*}
		Hence, \eqref{O3} holds.  
		
		Now, we check that {\bf (S)} is fulfilled.  
		Define $g(x,r)=r^{\alpha(x)} (\log (1+r))^{\gamma(x)}$ for $x \in U$, $r>0$. Since for any $\eps>0$, there is a constant $c_4=c_4(\eps)>0$ such that
		\begin{equation*}	\frac{\log(1+r)}{\log(1+s)} \le c_4 \bigg(\frac{r}{s} \bigg)^{\eps} \quad \text{for all} \;\; r \ge s \ge 1,
		\end{equation*}	
	one can see that $g(x,\cdot)$ satisfies {\bf (S)}(i).
	
		 Next, fix  any $x_0 \in U$ and $0<2r<1 \wedge \updelta_U(x_0)$. Let $\wt \alpha:{\mathbb R}^d \to (0,2]$ and $\wt \gamma:{\mathbb R}^d \to (-1,1)$ be H\"older continuous functions  such that (i)  for every $x \in \overline{B(x_0,r)}$, $\wt\alpha(x)=\alpha(x)$ and $\wt \gamma(x)=\gamma(x)$ and (ii) for every $x \in {\mathbb R}^d \setminus \overline{B(x_0,r)}$,  $\wt \alpha(x)/2+\wt \gamma(x) \in [0,1]$ and  for all $u>16$, 
		\begin{equation*}
			\frac{1}{2}\inf_{|y-x_0| \le r} u^{\alpha(y)}(\log (1+u))^{\gamma(y)}	\le u^{\alpha(x)}(\log (1+u))^{\gamma(x)}\le 2\sup_{|y-x_0| \le r} u^{\alpha(y)}(\log (1+u))^{\gamma(y)}.
		\end{equation*} 
	According to \cite[Theorem 3.3 and Extension 3.13]{Ku}, there exists a Feller process $Y$ on ${\mathbb R}^d$  having the symbol $q_Y(x,\xi)=2^{\wt \alpha(x)-1} |\xi|^{\wt \alpha(x)} (\log (1+2|\xi|))^{\wt \gamma(x)}$. Hence {\bf (S)}(ii) holds. 
		
		Note that $\Phi(x,r)=r^{\alpha(x)} (\log ( 1+1/r))^{-\gamma(x)}$ for $x \in U$ and $r>0$ in this case.  Hence, 
		\begin{equation}\label{e:Phi-inverse}
			\lim_{t \to 0} \frac{\Phi^{-1}(x, t/\log|\log t|)}{t^{1/\alpha(x)} |\log t|^{\gamma(x)/\alpha(x)} (\log|\log t|)^{-1/\alpha(x)}}=\alpha(x)^{\gamma(x)} \quad \text{for all} \;\; x \in U.
		\end{equation}
		Finally, since $\inf_{y \in U} \alpha(y) \wedge 2^{-1} \le \alpha(x)^{\gamma(x)} \le 2$ for all $x \in U$,  using
 Proposition \ref{p:FellerO4}, Theorem \ref{t:Feller1} and \eqref{e:Phi-inverse}, we conclude that there are constants  
		$a_2 \ge a_1>0$ such that for all $x \in U$, there exists  a constant $a_x \in [a_{1}, a_{2}]$ such that
		\begin{equation}\label{E:Feller-result1}
			\liminf_{t\to0} \frac{ \sup_{0<s\le t} | X_s - x | }{t^{1/\alpha(x)} |\log t|^{\gamma(x)/\alpha(x)} (\log|\log t|)^{-1/\alpha(x)}}=a_x,~\quad
			{\mathbb P}^x\mbox{-a.s.}
		\end{equation} 
	}
\end{example}

Let $\mathbb S^{d-1}:=\{y \in {\mathbb R}^d: |y|=1\}$ and $\mathbf e_i=\mathbf e_i(d)$, $1\le i\le d$ denote the standard basis of  ${\mathbb R}^d$.

\begin{example}\label{E:singular}
	{\rm  {\bf (Cylindrical stable-like  processes)}  Suppose that  $d \ge 2$ and 	 there exists a  H\"older continuous function $\alpha:U \to (0,2)$  with $\inf_{x \in U} \alpha(x)>0$ such that 
		\begin{equation*}
			q(x, \xi)=\sum_{i=1}^d |\xi_i|^{\alpha(x)} \;\; \text{for all  $x \in U$ and $\xi=(\xi_1,...,\xi_d)\in {\mathbb R}^d$}.
		\end{equation*}
		Note that for every $x \in U$, the L\'evy measure $\nu(x,dz)$ is a stable  kernel of the form
		\begin{equation}
			\nu(x,A) = \frac{\alpha(x) 2^{\alpha(x)-1} \Gamma((1+\alpha(x))/2)}{\pi^{1/2}\Gamma(1-\alpha(x)/2)} \int_0^\infty \int_{\mathbb S^{d-1}} \1_A(r \theta) r^{-1-\alpha(x)} \sum_{i=1}^d \delta_{\{\mathbf e_i\}}(\theta) dr,
		\end{equation}
		where $\Gamma(z):=\int_0^\infty u^{z-1}e^{-u}du$ is the gamma function and $\delta_{\{\mathbf e_i\}}$ is a Dirac measure on $\{\mathbf e_i\}$. Since  $|\xi|^{\alpha(x)} \le q(x,\xi) \le d|\xi|^{\alpha(x)}$ for all $x \in U$ and $\xi\in {\mathbb R}^d$, using the H\"older continuity of $\alpha$, one can see that \eqref{O3} holds  as in Example \ref{E:Feller}. Clearly,  {\bf (S)}(i) holds with  $g(x,r)=r^{\alpha(x)}$. 	 Choose  any $x_0 \in U$ and $0<2r<1 \wedge \updelta_U(x_0)$, and let $\wt \alpha:{\mathbb R}^d \to (0,2)$ be a H\"older continuous function  such that  for every $x \in \overline{B(x_0,r)}$, $\wt\alpha(x)=\alpha(x)$ and  for every $x \in {\mathbb R}^d \setminus \overline{B(x_0,r)}$, 
		\begin{equation*}
			\frac{1}{2}\inf_{|y-x_0| \le r} u^{\alpha(y)} 	\le u^{\alpha(x)} \le 2\sup_{|y-x_0| \le r} u^{\alpha(y)} \;\; \text{for all} \; u>16.
		\end{equation*} 
		According to \cite[Theorem 3.1]{KKS}, since the measure  $\sum_{i=1}^d \delta_{\{\mathbf e_i\}}$  on $\mathbb S^{d-1}$ is nondegenerate in the sense of \cite[{\bf (M1)}]{KKS}, there exists a Feller process $Y$ on ${\mathbb R}^d$ having the symbol $q_Y(x,\xi)=2^{\wt \alpha(x)-1} \sum_{i=1}^d |\xi_i|^{\wt \alpha(x)}$. Thus,  {\bf (S)}(ii) is satisfied. 
		
		In the end, using 
		Proposition \ref{p:FellerO4} and  Theorem \ref{t:Feller1} again, 
		we get a similar equation to \eqref{e:Phi-inverse} and  we can deduce that for all $x \in U$, the  LIL \eqref{E:Feller-result1} holds  with $\gamma=0$. 
	}
\end{example}

\section{Liminf LILs at infinity for random conductance model with long range jumps}\label{s:RCM}

In  \cite[Section 3]{CKL}, we have obtained limsup LILs at infinity for random conductance model with long range jumps using results in \cite{CKW18, CKW20-1}. In this section, we give liminf LILs  at infinity for such models.  We repeat the setting of the  random conductance models in  \cite[Section 3]{CKL} here 
for the readers' convenience. 

	Let $G= (\mathbb{L}, E_\mathbb{L})$ be a locally finite connected infinite undirected graph, where $\mathbb{L}$ is the set of vertices, and $E_\mathbb{L}$ the set of edges. For $x,y \in \mathbb{L}$, we denote $d(x,y)$ for the graph distance, namely, the length of the shortest path joining $x$ and $y$. Let $\mu_c$ be the counting measure on $\mathbb{L}$.   We  assume that for some constant $d>0$,
	\begin{equation}\label{(3.1)}
		\mu_c(B(x,r)) \asymp r^d \quad \mbox{for} \,\, x \in \mathbb{L}, \, r>10
	\end{equation}
	
	A \textit{random conductance} $\boldsymbol{\eta}=(\eta_{xy} : x,y \in \mathbb{L})$ on $\mathbb{L}$ is a family of nonnegative random variables defined on some probability space $(\Omega,{\bf F}, {\bf P})$ such that  $\eta_{xx} = 0$ and $\eta_{xy} = \eta_{yx}$ for all $x,y \in \mathbb{L}$. We set $\nu_x:=\sum_{y \in \mathbb{L}} \eta_{xy}$ for $x \in \mathbb{L}$ and  denote $\mathbf E$ for the expectation with respect to $\mathbf{P}$.  
	 For each $\omega \in \Omega$, the \textit{variable speed random walk} (VSRW) $X^{\omega} = (X^\omega_t, t \ge 0; {\mathbb P}^x_\omega, x \in \mathbb{L})$ (associated with  $\boldsymbol{\eta}$) is defined by the symmetric Markov process on $\mathbb{L}$ with $L^2(\mathbb{L}, \mu_c)$-generator
		$$ \sL_V^\omega f(x) = \sum_{y \in \mathbb{L}} \eta_{xy}(\omega)(f(y) - f(x)), \quad x \in \mathbb{L}, $$
		and the \textit{constant speed random walk} (CSRW) $Y^\omega = (Y^\omega_t, t \ge 0; {\mathbb P}^x_\omega, x \in \mathbb{L})$ (associated with $\boldsymbol{\eta}$)  is the symmetric Markov process on $\mathbb{L}$ with $L^2(\mathbb{L}, \nu)$-generator
		$$ \sL_C^\omega f(x) = \nu_x^{-1}(\omega) \sum_{y \in \mathbb{L}} \eta_{xy}(\omega)(f(y)-f(x)), \quad x \in \mathbb{L}. $$
		
		Let $\alpha \in (0,2)$ and  $\boldsymbol{\eta}$ be a random conductance on $\mathbb{L}$. With the constant $d>0$ in \eqref{(3.1)}, we write $w_{xy}:=\eta_{xy} |x-y|^{d+\alpha}$ for $x,y \in \mathbb{L}$ so that 
		\begin{equation*}
		\eta_{xx}=w_{xx} = 0 \quad \mbox{and} \quad \eta_{xy}(\omega) = \frac{w_{xy}(\omega)}{|x-y|^{d+\alpha}}, \quad x \neq y, \quad x,y \in \mathbb{L},
	\end{equation*}
		Suppose that $d > 4-2\alpha$, 
$$
			\sup_{x,y \in \mathbb{L}, x\neq y} {\bf P}(w_{xy} = 0)=\sup_{x,y \in \mathbb{L}, x\neq y} {\bf P}(\eta_{xy} = 0)< 1/2 ,
			$$ 
	and 
	$$\sup_{x,y \in \mathbb{L}, x\neq y} \big( {\bf E}[w_{xy}^p] + {\bf E}[w_{xy}^{-q} \1_{\{ w_{xy} > 0  \}}] \big) < \infty$$
			with some constants
		\begin{equation*}
			p> \frac{d+2}{d} \lor \frac{d+1}{4-2\alpha} \quad  \mbox{and} \quad q > \frac{d+2}{d}.
		\end{equation*}
	When we consider the CSRW $Y^\omega$, we also assume that there exist constants $m_2 \ge m_1 >0$ such that for ${\bf P}$-a.s. $\omega$,
				\begin{equation*}
			\eta_{xy}(\omega) > 0 \;\; \mbox{for all} \; x,y \in \mathbb{L}, \, x \neq y \quad \mbox{and} \quad m_1 \le \sum_{y \in \mathbb{L}, y \neq x} \eta_{xy}(\omega)\le m_2 \;\; \mbox{for all} \; x \in \mathbb{L}.
		\end{equation*}
	
		According to the proof of  \cite[Theorem 3.1]{CKL}, for ${\bf P}$-a.s. $\omega$, there are constants $\ups \in (0,1)$ independent of $\omega$ and  $r_1(\omega), r_2(\omega) \ge 1$    such that    conditions 	${\rm Tail}^{r_1(\omega)}(r^\alpha,\ups)$ and ${\rm NDL}^{r_2(\omega)}(r^\alpha,\ups)$ (in Definitions \ref{d:inf})
		  hold for both $X^\omega$ and $Y^\omega$. Then, since ${\rm VRD}^{10}(\ups)$ holds by \eqref{(3.1)},  using Lemma \ref{l:NDL}(ii), we conclude from  Corollary \ref{c:infinf}   that 
		there exist constants $0<a_1 \le a_2 <\infty$ such that for  $\mathbf P$-a.s. $\omega$, there exist $a_3(\omega),a_4(\omega) \in [a_1,a_2]$ so that for all $x,y \in \bL$,
		\begin{align*}
			\liminf_{t \to \infty} \frac{ \sup_{0<s \le t} d(x,X^\omega_s)}{t^{1/\alpha} (\log\log t)^{-1/\alpha}} = a_3(\omega), \quad  \liminf_{t \to \infty} \frac{ \sup_{0<s \le t} d(x,Y^\omega_s)}{t^{1/\alpha} (\log\log t)^{-1/\alpha}} = a_4(\omega), \quad\;\, {\mathbb P}^y_\omega\text{-a.s.}
		\end{align*}
	Moreover, when $\alpha \in (0,1)$, the above LILs still hold true for $d > 2-2\alpha$, if $p>\max\{ (d+2)/d,  (d+1)/(2-2\alpha) \}$ and $q>(d+2)/d$, by the proof for the latter statement of \cite[Theorem 3.1]{CKL}.
		
		\medskip

	\section{Liminf LILs for subordinate processes and symmetric Hunt processes}\label{s:hunt}
	In this section, we give  liminf LILs for subordinate processes and symmetric Hunt processes. See \cite[Section 2]{CKL} for detailed descriptions and limsup LILs for such processes.
	
	\medskip
	
		Recall that $(M,d,\mu)$ is a locally compact separable metric space with a positive Radon measure $\mu$ on $M$ with full support. Let $\bar R := \sup_{y,z \in M} d(y,z)$ and $F$ be an increasing and continuous function on $(0,\infty)$ such that for some constants $\gamma_2 \ge \gamma_1 > 1$ and $c_L,c_U>0$,
		\begin{equation}\label{e:scaling-F} c_L \bigg( \frac{R}{r} \bigg)^{\gamma_1} \le \frac{F(R)}{F(r)} \le c_U \bigg( \frac{R}{r} \bigg)^{\gamma_2} \quad \mbox{for all} \,\, 0<r \le R < \bar R. 
			\end{equation}
		We assume that  ${\rm VRD}_{\bar R}(M)$ and a chain condition ${\rm Ch}_{\bar R}(M)$ (see \cite[Definition 1.2]{CKL}) hold. 		 We also assume that there exists a conservative Hunt process $Z = (Z_t, t \ge 0; {\mathbb P}^x, x \in M)$ on $M$ whose heat kernel $q(t,x,y)$ (with respect to $\mu$) exists and satisfies the following estimates: There are constants $R_1 \le \bar R$ and $c_1,c_2,c_3>0$ such that for all $t \in (0,F(R_1))$ and $x,y \in M$,
			\begin{equation}\label{e:heatkernel-F}
				\frac{c_1}{V(x,F^{-1}(t))} \1_{\{F(d(x,y)) \le t\}} \le q(t,x,y) \le \frac{c_2}{V(x,F^{-1}(t))} \exp \big( - c_3 F_1(d(x,y),t) \big),
			\end{equation}
			where the function $F_1$ is defined by		 $\displaystyle  F_1(r,t) := \sup_{s >0} \Big( \frac{r}{s} - \frac{t}{F(s)} \Big)$.
		   
		   \smallskip

		   Let $S = (S_t)_{t \ge 0}$ be a subordinator independent of $Z$. We denote by $\phi_1$ the Laplace exponent of $S$. Then it is well known that there exist a constant $b \ge 0$ and a Borel measure $\nu$ on $(0,\infty)$ satisfying $\int_0^\infty (1 \wedge u) \nu(du)<\infty$ such that 
		   $$ \phi_1(\lambda):=-\log \E[e^{-\lambda S_1}] = b \lambda + \int_{(0,\infty)}(1- e^{-\lambda u})\nu(du), \quad\;\, \lambda>0. $$
		  We assume either $b \neq 0$ or $\nu((0,\infty)) = \infty$. 
		   
		   Let $X=(X_t)_{t\ge 0}$ be the subordinate process defined by	   $X_t := Z_{S_t}$.  Define
		   $$ \Phi(r)= \frac{1}{\phi_1(1/F(r))} \quad \mbox{ and } \quad \Pi(r)= \frac{2e}{\nu((F(r),\infty))} \quad\;\; \mbox{for} \;\, r>0.$$
	Then $\Phi$ and $\Pi$ are nondecreasing and $ \Phi(r)  \le \Pi(r)$ for all $r>0$. Moreover, since we have  assumed that either $b\neq 0$ or $\nu((0,\infty))=\infty$,  by  \eqref{e:scaling-F}, we have that  $\lim_{r \to 0} \Phi(r)=0$ and that
	\begin{equation}\label{e:scaling-Phi}
		\frac{\Phi(r)}{\Phi(s)} \le c_U \bigg( \frac{r}{s}\bigg)^{\gamma_2} \quad \mbox{for all} \,\, 0<r \le R < \bar R. 
	\end{equation}
See \cite[Subsection 2.1]{CKL}. 

By \eqref{e:scaling-Phi} and \cite[Lemmas A.2 and A.3(i)]{CKL},  using Lemma \ref{l:NDL}(i), we see that conditions \eqref{A1}-\eqref{A4} hold  for $U = M$ and that  the function $\phi(x,r):=\Phi(r)$ satisfies  \eqref{e:phi} for $0<r\le 1$. Therefore, we get from  Theorem \ref{inf} and Corollary \ref{c:inf} that
\begin{thm}
There are constants $a_2 \ge a_1>0$ such that for all $x \in M$, there exists a constant $a_x \in [a_1,a_2]$ satisfying
	\begin{equation}\label{e:Hunt-zero-pre}
		\liminf_{t\to0} \frac{\Phi\big(\sup_{0<s\le t} d(x, X_s)\big)}{t/\log|\log t|}=a_x,\qquad
		{\mathbb P}^x\mbox{-a.s.}
	\end{equation}
Moreover, if $\phi_1$ satisfies lower scaling property ${\mathrm{L}}^1(\phi_1, \beta_1, c_1)$ (see Definition \ref{d:ws} in Appendix) for some $\beta_1,c_1>0$, then there are constants $\wt a_2 \ge \wt a_1>0$  such that for all $x \in M$, there exists a constant $\wt a_x \in [\wt a_1,\wt a_2]$ satisfying
\begin{equation}\label{e:Hunt-zero}
	\liminf_{t\to0} \frac{\sup_{0<s\le t} d(x, X_s)}{\Phi^{-1}(t/\log|\log t|)}=\wt a_x,\qquad
	{\mathbb P}^x\mbox{-a.s.}
\end{equation}
\end{thm}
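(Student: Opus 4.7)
The plan is to invoke Theorem \ref{inf} and Corollary \ref{c:inf} directly with $U = M$ and $\phi(x,r) = \Phi(r)$. The paragraph immediately preceding the theorem already assembles the verification of \eqref{A1}--\eqref{A4} by combining three ingredients: \cite[Lemmas A.2 and A.3(i)]{CKL} supply $\E^x[\tau_{B(x,r)}] \asymp \Phi(r)$ uniformly in $x$ for $r \in (0,1]$, which simultaneously yields the spatial homogeneity \eqref{A1} and, through \eqref{e:scaling-Phi}, the doubling half \eqref{A2}; the Lévy measure tail bound \eqref{A3} follows from the standard subordination identity $J_X(x, B(x,r)^c) \asymp 1/\Phi(r)$; and Lemma \ref{l:NDL}(i) upgrades the near-diagonal lower bound for $X$ (obtained by subordinating the Gaussian-type lower bound on $q(t,x,y)$ from \eqref{e:heatkernel-F}) to the exponential exit-time estimate \eqref{A4}.

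For the first assertion \eqref{e:Hunt-zero-pre}, I would simply feed the above into Theorem \ref{inf}. Since $\phi(x,r) = \Phi(r)$ does not depend on $x$, the numerator in \eqref{1} reduces to $\Phi(\sup_{0 < s \le t} d(x, X_s))$, giving \eqref{e:Hunt-zero-pre} with the uniform bracket $[a_1, a_2]$ produced by Theorem \ref{inf} (adjusted by the comparability constant in \eqref{e:phi}, as in Remark \ref{r:a}(i)).

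For the second assertion \eqref{e:Hunt-zero}, the additional ingredient needed is the lower scaling \eqref{e:scaling-zero-lower} for $\Phi$. I would derive it from $\mathrm{L}^1(\phi_1, \beta_1, c_1)$ together with the lower scaling of $F$ in \eqref{e:scaling-F}: writing $\Phi(r) = 1/\phi_1(1/F(r))$ and noting that $1/F(r) \to \infty$ as $r \to 0$, for $0 < s \le r$ small,
\begin{equation*}
\frac{\Phi(r)}{\Phi(s)} \;=\; \frac{\phi_1(1/F(s))}{\phi_1(1/F(r))} \;\ge\; c_1 \left(\frac{F(r)}{F(s)}\right)^{\beta_1} \;\ge\; c_1 c_L^{\beta_1} \left(\frac{r}{s}\right)^{\gamma_1 \beta_1},
\end{equation*}
so \eqref{e:scaling-zero-lower} holds with exponent $\gamma_1 \beta_1$. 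Corollary \ref{c:inf} then delivers \eqref{e:Hunt-zero}.

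There is no genuine obstacle here because the substantive analytic work has been absorbed into the cited lemmas on subordinate processes and into Lemma \ref{l:NDL}(i). The only bookkeeping is to track the range of arguments for which each scaling is valid and to shrink the threshold $R_0$ if needed so that \cite[Lemmas A.2 and A.3(i)]{CKL}, the scaling bounds on $\Phi$, and $\mathrm{L}^1(\phi_1, \beta_1, c_1)$ are simultaneously in force; this is routine since the theorem concerns only the behavior as $t \to 0$.
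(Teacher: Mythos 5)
Your proposal is correct and follows essentially the same route as the paper: verify \eqref{A1}--\eqref{A4} for $U=M$ with $\phi(x,r)=\Phi(r)$ via \cite[Lemmas A.2, A.3(i)]{CKL} and Lemma \ref{l:NDL}(i) (with Proposition \ref{r:E}(i) implicitly bridging \eqref{A4+} to \eqref{A4}), then invoke Theorem \ref{inf} for \eqref{e:Hunt-zero-pre} and Corollary \ref{c:inf} for \eqref{e:Hunt-zero}. Your explicit derivation of \eqref{e:scaling-zero-lower} for $\Phi$ from $\mathrm{L}^1(\phi_1,\beta_1,c_1)$ and the lower scaling of $F$ spells out a step the paper leaves implicit, but it matches what the paper's citation of Corollary \ref{c:inf} requires.
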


\medskip

Here, we point out that our liminf LIL \eqref{e:Hunt-zero-pre}  covers the cases when $\phi_1$ is slowly varying at infinity. Therefore, general liminf LIL \eqref{e:Hunt-zero-pre}   can be applicable to some jump processes with low intensity of small jumps such as geometric $2\alpha$-stable processes on ${\mathbb R}^d$ ($0<\alpha\le 1$), namely, a L\'evy process on ${\mathbb R}^d$ with the characteristic exponent $\log(1+|\xi|^{2\alpha})$.

\medskip

To get liminf LILs at infinity, we also assume that constants $\bar R = R_1=\infty$ in \eqref{e:scaling-F} and \eqref{e:heatkernel-F}, and ${\mathrm{L}}_1(\phi_1, \beta_1, c_1)$ (see Definition \ref{d:ws} in Appendix \ref{s:A}) hold for some $\beta_1, c_1>0$.
Then by  \cite[Lemma A.4(ii)]{CKL} and Lemma \ref{l:NDL}(ii), the function $\phi(x,r):=\Phi(r)$ satisfies  \eqref{e:phi} for $r\ge 1$ and \eqref{B4+} holds true. Since conditions \eqref{B1}-\eqref{B3} holds  by \cite[Lemmas A.2 and A.3(i)]{CKL}, we deduce from  Corollary \ref{c:infinf} that 
\begin{thm}
Suppose that \eqref{e:scaling-F} and \eqref{e:heatkernel-F} hold true with $\bar R = R_1=\infty$, and $\phi_1$ satisfies the lower scaling property ${\mathrm{L}}_1(\phi_1, \beta_1, c_1)$  for some $\beta_1, c_1>0$, i.e., 
$$ \frac{\phi_1(r)}{\phi_1(s)} \geq c_1 \Big(\frac{r}{s}\Big)^{\beta_1} \quad \text{for all} \quad s\leq r< 1.$$
Then,	there  exists a constant $b_\infty$ such that for all $x,y \in M$, 
	\begin{equation}\label{e:Hunt-infty}
		\liminf_{t\to\infty} \frac{\sup_{0<s\le t} d(x, X_s)}{\Phi^{-1}(t/\log\log t)}=b_\infty,\qquad
		{\mathbb P}^y\mbox{-a.s.}
	\end{equation}
\end{thm}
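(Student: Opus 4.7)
The strategy is to reduce the theorem to a direct application of Corollary~\ref{c:infinf}. Concretely, we must verify for the subordinate process $X_t=Z_{S_t}$ the following four conditions at infinity with the choice $\phi(r):=\Phi(r)$: the weak volume doubling/reverse doubling property $\mathrm{VRD}^{R_\infty'}(\ups)$, the exit-time comparison \eqref{B1}, the mean-exit-time scaling \eqref{B2}, the tail bound \eqref{B3}, and the near-diagonal lower heat kernel bound \eqref{B4+}. Once these are in hand the existence of a deterministic constant $b_\infty\in(0,\infty)$ satisfying \eqref{e:Hunt-infty} follows immediately from the Corollary.

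The first step is to note that $\mathrm{VRD}_{\bar R}(M)$ together with $\bar R=\infty$ implies $\mathrm{VRD}^{R_\infty'}(\ups)$ for any $\ups\in(0,1)$ and $R_\infty'\ge1$, so the metric-measure hypothesis of Corollary~\ref{c:infinf} is automatic. For \eqref{B1}, \eqref{B2}, and \eqref{B3}, the plan is to invoke \cite[Lemmas~A.2 and A.3(i)]{CKL}, which, applied with $\bar R=R_1=\infty$, yield
\[
\E^x[\tau_{B(x,r)}]\asymp \Phi(r) \quad \text{and} \quad J(x,M\setminus B(x,r))\lesssim \Phi(r)^{-1}
\]
uniformly in $x\in M$ and $r$ sufficiently large. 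The comparison of mean exit times between arbitrary $x$ and the base point $o$ is then immediate because $\Phi$ is independent of $x$, and the doubling and polynomial lower bound in \eqref{B2} follow from \eqref{e:scaling-Phi} combined with the lower scaling $\mathrm{L}_1(\phi_1,\beta_1,c_1)$, which forces $\Phi(r)/\Phi(s)\gtrsim (r/s)^{\beta_1\gamma_1}$ for $r\ge s$ large.

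The main technical step, and the one I expect to be the genuine obstacle, is to verify \eqref{B4+}, i.e.\ a near-diagonal lower bound on the Dirichlet heat kernel $p^{B(x,r)}(\Phi(\eta r),y,z)\ge c_\ast/V(x,r)$ for $y,z\in B(x,\eta^2 r)$ and $r$ large. The plan is to combine two ingredients: (i) a global near-diagonal lower bound for $X$ obtained by subordinating the global heat kernel estimate \eqref{e:heatkernel-F} for $Z$ using the lower scaling $\mathrm{L}_1(\phi_1,\beta_1,c_1)$ (this is exactly the content of \cite[Lemma~A.4(ii)]{CKL}, which yields $p(t,y,z)\ge c/V(y,\Phi^{-1}(t))$ for $d(y,z)\le c'\Phi^{-1}(t)$ and $t\ge1$), and (ii) a Meyer-type decomposition to pass from the global heat kernel to the Dirichlet heat kernel on $B(x,r)$. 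For the second step, the standard device writes the survival probability as $1-{\mathbb P}^y(\tau_{B(x,r)}\le t)$ and uses \eqref{B3} together with the exit-time control \eqref{B2} to estimate ${\mathbb P}^y(\tau_{B(x,r)}\le t)\le Ct/\Phi(r)$, which is small when $t=\Phi(\eta r)$ and $\eta$ is chosen small enough. Applying the lower bound of Lemma~\ref{l:hkendl} (the appendix lemma precisely designed for this purpose) with the global lower bound from \cite[Lemma~A.4(ii)]{CKL} then gives \eqref{B4+}.

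Having verified \eqref{B1}--\eqref{B4+} and $\mathrm{VRD}^{R_\infty'}(\ups)$, Corollary~\ref{c:infinf} produces a constant $b_\infty\in(0,\infty)$ such that for all $x,y\in M$,
\[
\liminf_{t\to\infty}\frac{\sup_{0<s\le t}d(x,X_s)}{\phi^{-1}(t/\log\log t)}=b_\infty,\qquad {\mathbb P}^y\text{-a.s.}
\]
Since $\phi(r)\asymp\Phi(r)$ for $r\ge1$, Remark~\ref{r:a}(ii) allows us to replace $\phi^{-1}$ by $\Phi^{-1}$ after redefining $b_\infty$ if necessary, yielding \eqref{e:Hunt-infty}.
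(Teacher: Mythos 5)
Your proof takes essentially the same route as the paper: verify $\mathrm{VRD}$ and \eqref{B1}--\eqref{B4+} using \cite[Lemmas A.2, A.3(i), A.4(ii)]{CKL} together with this paper's appendix lemmas (the paper cites Lemma~\ref{l:NDL}(ii) as the wrapper where you unpack it into Lemma~\ref{l:hkendl}), and then apply Corollary~\ref{c:infinf}. One minor slip worth noting: the device inside Lemma~\ref{l:hkendl} is the strong-Markov representation \eqref{e:dhk} of the Dirichlet heat kernel together with the off-diagonal upper bound \eqref{e:uhk}, not a Meyer-type jump-removal construction nor a survival-probability estimate via \eqref{B2}--\eqref{B3}; since you ultimately just invoke the lemma, the argument still goes through.
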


\bigskip

Similar results hold for symmetric Hunt processes considered in \cite[Subsection 2.2]{CKL}. Precisely, let $X$ be a Hunt process on $M$ associated with a regular Dirichlet form $(\sE^X, \sF^X)$ of the form \cite[(2.21)]{CKL} satisfying \cite[Assumption L]{CKL}. With the function $\Phi_1$ defined in \cite[(2.22)]{CKL} and open subset $\sU$ of $M$ in \cite[Assumption L]{CKL}, using \cite[(B.7), Propositions B.1 and B.13 and Lemma B.8]{CKL} and our Lemma \ref{l:NDL}, we can deduce that  the function $\phi(x,r):=\Phi_1(r)$ satisfies  \eqref{e:phi} for $0<r\le 1$, and conditions \eqref{A1}-\eqref{A3} and \eqref{A4+} hold for $U=\sU$.  Moreover, when constants $\bar R = R_1=\infty$ in \eqref{e:scaling-F} and \eqref{e:heatkernel-F}, the function $\phi(x,r):=\Phi_1(r)$ satisfies  \eqref{e:phi} for $r\ge 1$, and conditions \eqref{B1}-\eqref{B3} and \eqref{B4+} hold. Therefore, we conclude from Corollaries \ref{c:inf} and \ref{c:infinf} that the liminf LIL at zero \eqref{e:Hunt-zero} holds for $x\in \sU$ with the function $\Phi_1$ instead of $\Phi$, and if we also assume $\bar R = R_1=\infty$ in \eqref{e:scaling-F} and \eqref{e:heatkernel-F}, then the liminf LIL at infinity \eqref{e:Hunt-infty} holds with the function $\Phi_1$ instead of $\Phi$.

\section{Proof of Main theorems}\label{s:proof}

Recall that we always assume that $\phi(x,r)$ (and $\phi(r)$) satisfies \eqref{e:phi}.

Proposition \ref{p:EP}(ii) below  follows from  \cite[Proposition 4.9(ii) and Corollary 4.10]{CKL}. 
Moreover, when $r\mapsto \phi(x,r)$ is comparable with a strictly increasing continuous function on $(0,\infty)$ independent of $x \in U$, the inequality \eqref{e:EP+} of Proposition \ref{p:EP}(i) is obtained in 
\cite[Proposition 4.9(i)]{CKL}
 with $\theta=1$. But since we allow $\phi(x,r)$ to depend on the space variable $x$ here, we need some significant modifications in the proof for the next proposition.

\begin{prop}\label{p:EP} 
	\noindent (i) Suppose that \eqref{A1}, \eqref{A2} and \eqref{A3} hold. Then there exist constants $\theta \in (0,1]$ and $c>0$ such that  for all $x \in U$, $0<r<3^{-1}( R_0 \wedge (C_0\updelta_U(x)))$ and $t>0$,
	\begin{equation}\label{e:EP+}
		{\mathbb P}^x(\tau_{B(x,r)} \le t) \le c \left(\frac{t}{\phi(x,r)}\right)^{\theta}.
	\end{equation}
	
	\noindent (ii) Suppose that \eqref{B1}, \eqref{B2} and \eqref{B3} hold. Let $\ups_1 \in (\ups, 1)$. Then there exist constants $c>0$ and  $R_1 \ge R_\infty$  such that \eqref{e:EP+} holds with $\theta=1$ and $\phi(r)$ instead of $\phi(x,r)$ for all $x \in M$, $r>R_1 {\mathsf{d}}(x)^{\ups_1}$ and $t \ge \phi(2r^{\ups/\ups_1})$.  	Moreover, $X$ is conservative, that is, ${\mathbb P}^x(\zeta=\infty)=1$  for all $x \in M$.
\end{prop}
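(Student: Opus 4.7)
The plan is to follow \cite[Proposition 4.9 and Corollary 4.10]{CKL}, with non-trivial modifications in part (i) to accommodate the $x$-dependence of $\phi(x,r)$. Part (ii) is essentially a direct application of the CKL result plus a short conservativeness argument.

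\emph{Part (i).} I would first establish the baseline survival bound ${\mathbb P}^x(\tau_{B(x,r)} > \eta_0 \phi(x,r)) \ge \delta_0$ with constants $\eta_0, \delta_0 \in (0,1)$ independent of $x,r$. This comes from combining $\E^x[\tau_{B(x,r)}] \asymp \phi(x,r)$ from \eqref{e:phi} with the strong Markov identity
\[
\E^x[\tau_{B(x,r)}] \le s + {\mathbb P}^x(\tau_{B(x,r)} > s) \sup_{y \in B(x,r)} \E^y[\tau_{B(x,r)}],
\]
and bounding $\E^y[\tau_{B(x,r)}] \le \E^y[\tau_{B(y,2r)}] \le C \phi(x,r)$ through \eqref{A1} at radius $2r$ together with \eqref{A2}, then choosing $s = \eta_0 \phi(x,r)$ with $\eta_0$ small. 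With the baseline in hand, I would iterate following the scheme in \cite[Proposition 4.9(i)]{CKL}: apply the strong Markov property at successive exit times from sub-balls of radius $r/2$ centered at points along the trajectory, and invoke \eqref{A3} through the L\'evy system to rule out a single large jump exiting $B(x,r)$. The key point is that as long as the process remains in $B(x,r)$, condition \eqref{A1} at radius $r/2$ gives $\phi(y,r/2) \asymp \phi(x,r/2)$ uniformly, and \eqref{A2} converts this back to $\phi(x,r)$. Iterating $n$ times yields a factor $(1-\delta_0)^n$ which, after matching $n$ to $t$ via the doubling scale, produces the desired $c(t/\phi(x,r))^{\theta}$.

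\emph{Part (ii).} The bound \eqref{e:EP+} with $\theta=1$ follows from \cite[Proposition 4.9(ii)]{CKL} applied with the $x$-independent function $\phi(r) := \E^o[\tau_{B(o,r)}]$, after verifying that for $r > R_1 {\mathsf{d}}(x)^{\ups_1}$ with $R_1$ chosen large and for $t \ge \phi(2 r^{\ups/\ups_1})$, the ball $B(x,r)$ and the relevant sub-balls all lie in the region where \eqref{B1}--\eqref{B3} are uniformly available; the assumption $\ups_1 > \ups$ supplies the required buffer. For conservativeness, I would use $\{\zeta \le t\} \subset \{\tau_{B(x,R)} \le t\}$ for every $R>0$ and, given $t$, choose $R=R(t)$ such that $\phi(2R^{\ups/\ups_1}) = t$; then \eqref{e:EP+} and \eqref{e:scaling-infty} give
\[
{\mathbb P}^x(\zeta \le t) \le \frac{c\,\phi(2R^{\ups/\ups_1})}{\phi(R)} \le C R^{-\beta_2(1-\ups/\ups_1)},
\]
which tends to $0$ as $t \to \infty$ (so $R \to \infty$), since $1-\ups/\ups_1 > 0$. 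This forces ${\mathbb P}^x(\zeta < \infty) = 0$.

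The main obstacle is the iteration in part (i): unlike in the $x$-independent setting of \cite{CKL}, each invocation of the strong Markov property here picks up a multiplicative factor from \eqref{A1}, and naive chaining of these factors along the trajectory would blow up the constant in $n$. The remedy is to compare $\phi(\cdot,r/2)$ back to the fixed center value $\phi(x,r/2)$ at every step, using that all the relevant sub-balls remain inside $B(x, R_0 \wedge (C_0 \updelta_U(x)))$ throughout the iteration. This confines the loss to a single multiplicative constant per step and yields $\theta \in (0,1]$, possibly strictly smaller than the value $\theta=1$ available in the $x$-independent case.
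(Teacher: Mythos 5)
Your framework for part (i) --- baseline survival estimate, iteration via the strong Markov property, control of large jumps through \eqref{A3} --- is aimed in the right direction, and you correctly single out the accumulation of \eqref{A1}-constants as the central difficulty. But the iteration as you describe it has a genuine gap: with sub-balls of radius $r/2$, the chain terminates after at most two steps (the process may already have left $B(x,r)$), so there is no growing parameter $n$ to match against $t$, and no power of $t/\phi(x,r)$ can be extracted. What the paper actually does is introduce a second, $t$-dependent length scale $\rho := r(C_U t/\phi(x,r))^{1/(2\beta_2)}$, truncate the jumps at size $\rho$ via Meyer's construction, iterate a Laplace-transform estimate for the truncated process $X^{(\rho)}$ over a geometric sequence of $\approx \log_2(r/\rho)$ nested balls $B(x,(2^k-1)\rho)$ (Lemmas \ref{l:3.8}--\ref{l:3.9}, which yield a bound $\asymp (\rho/r)^{\theta_1}$), and then split the exit event $\{\tau_{B(x,r)}\le t\}$ by jump-size tiers --- at least two medium jumps in $[\rho, r/4)$, at least one large jump $\ge r/4$, or neither --- estimating each piece via \eqref{A3} and the truncated-process bound. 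Substituting the choice of $\rho$ converts $(\rho/r)^{\theta_1}$ into $(t/\phi(x,r))^{\theta_1/(2\beta_2)}$, which is the origin of $\theta$ being possibly strictly below $1$. Your proposal has neither the $t$-dependent scale $\rho$ nor the jump truncation, and ``invoking \eqref{A3} to rule out a single large jump'' does not replace the three-tier decomposition: one large jump is not ruled out, its probability is only bounded by $\lesssim t/\phi(x,r)$, and the medium-jump range needs its own quantitative estimate. Without these ingredients the iteration does not close.

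For part (ii), the paper simply cites \cite[Proposition~4.9(ii) and Corollary~4.10]{CKL}, so your treatment is consistent with the source; the conservativeness argument you sketch is correct in outline, though the displayed exponent should involve the lower-scaling index $\beta_1$ from \eqref{e:scaling-infty} (one lower-bounds $\phi(R)/\phi(2R^{\ups/\ups_1})$), not $\beta_2$.
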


 Before giving the proof of Proposition \ref{p:EP}, we present some lemmas which will be used in the proof of Proposition \ref{p:EP}(i).

For $\rho>0$, let $X^{(\rho)}$ be a Borel standard Markov process on $M$
obtained from $X$ by suppressing all jumps with jump size bigger than $\rho$
so that the L\'evy measure $J^{(\rho)}(x,dy)$ of $X^{(\rho)}$ is $J^{(\rho)}(x,A)= J(x,A \cap B(x,\rho))$ for every measurable set $A \subset M$. Then the original process $X$ can be constructed from $X^{(\rho)}$ by the Meyer's construction. See \cite{Me75}  and  \cite[Section 3]{BGK09} for details.

Denote $\tau^{(\rho)}_D:=\inf\{t>0:X_t^{(\rho)} \in M_\partial \setminus D\}$ for the first exit time of $X^{(\rho)}$ from $D$.
We first generalize \cite[Lemma 4.7(i)]{CKL}.

\begin{lem}\label{l:3.8} 
Suppose that \eqref{A1}, \eqref{A2} and \eqref{A3} hold. Then, there exist constants $\delta \in (0,1)$ and $K_1>0$ such that for all $x \in U$ and $0<\rho<3^{-1}\big( R_0 \wedge (C_0 \updelta_U(x))\big)$,
	\begin{equation*}
		\E^x \bigg[ \exp \Big( - \frac{K_1 }{\E^x[\tau_{B(x,r)}]}  \tau^{(\rho)}_{B(x,\rho)} \Big)\bigg] \le 1-\delta. 
	\end{equation*}
\end{lem}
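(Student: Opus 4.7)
The plan is to deduce the exponential-moment bound from a probabilistic lower-tail estimate on $T := \tau^{(\rho)}_{B(x,\rho)}$. Writing $\mathsf{M} := \E^x[\tau_{B(x,\rho)}]$ and applying the elementary inequality $1-e^{-u} \ge (1-e^{-1}) \min(u,1)$ with $\lambda = K_1/\mathsf{M}$ gives
\begin{equation*}
1 - \E^x[e^{-\lambda T}] \;\ge\; (1-e^{-1})\,\lambda\, \E^x[T \wedge \lambda^{-1}] \;\ge\; (1-e^{-1})\,{\mathbb P}^x\!\bigl(T \ge \mathsf{M}/K_1\bigr).
\end{equation*}
It therefore suffices to exhibit universal constants $K_1 \ge 1$ and $\delta' > 0$, depending only on $C_0,C_1,C_2,C_3$, such that ${\mathbb P}^x(T \ge \mathsf{M}/K_1) \ge \delta'$ throughout the allowed range of $x$ and $\rho$.

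The central task is to establish the two-sided comparison $c_2 \mathsf{M} \le \E^x[T] \le \sup_{y \in B(x,\rho)} \E^y[T] \le C_4 \mathsf{M}$ by means of Meyer's construction, in which $X$ is recovered from $X^{(\rho)}$ by superimposing jumps of size greater than $\rho$ at the trajectory-dependent rate $\lambda(y) := J(y,M_\partial \setminus B(y,\rho))$. The constraint $\rho < 3^{-1}(R_0 \wedge C_0 \updelta_U(x))$ ensures that \eqref{A1} is applicable at both scales $\rho$ and $2\rho$ around every $y \in B(x,\rho)$, and combined with \eqref{A3} this yields $\lambda(y) \le c_1/\mathsf{M}$ uniformly on $B(x,\rho)$. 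Decomposing $\tau_{B(x,\rho)}$ at the first big-jump time $T_1$ and post-jump location $Y_1$, and applying the strong Markov property, I obtain
\begin{equation*}
\mathsf{M} \;\le\; \E^x[T] \;+\; \sup_{y \in B(x,\rho)} \E^y[\tau_{B(x,\rho)}] \cdot {\mathbb P}^x\!\bigl(T_1 < T,\, Y_1 \in B(x,\rho)\bigr);
\end{equation*}
the supremum is controlled using $B(x,\rho) \subset B(y, 2\rho)$ together with \eqref{A1} at scale $2\rho$ and \eqref{A2}, giving $\sup_{y \in B(x,\rho)} \E^y[\tau_{B(x,\rho)}] \le C_1 C_2 \mathsf{M}$. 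The L\'evy system applied through the Meyer representation bounds the last probability by $\lambda_{\sup} \cdot \E^x[T] \le c_1 \E^x[T]/\mathsf{M}$, whence $\E^x[T] \ge \mathsf{M}/(1 + c_1 C_1 C_2) =: c_2 \mathsf{M}$. Repeating this computation with $x$ replaced by arbitrary $y \in B(x,\rho)$ (and using the same uniform bound) produces the upper estimate $\sup_{y \in B(x,\rho)} \E^y[T] \le C_4 \mathsf{M}$.

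To upgrade the mean estimates to a lower-tail probability, I would combine the sub-multiplicativity ${\mathbb P}^x(T > s+t) \le {\mathbb P}^x(T > s)\sup_{y \in B(x,\rho)} {\mathbb P}^y(T > t)$ coming from the strong Markov property with Markov's inequality and the uniform upper bound $\sup_y \E^y[T] \le C_4 \mathsf{M}$, obtaining the exponential tail ${\mathbb P}^x(T > t) \le 2\,e^{-c_3 t/\mathsf{M}}$ and in particular $\E^x[T^2] \le C_5 \mathsf{M}^2$. The Paley--Zygmund inequality then yields ${\mathbb P}^x(T \ge \E^x[T]/2) \ge c_2^2/(4 C_5) =: \delta_0 > 0$, and since $c_2 \mathsf{M} \le \E^x[T]$, choosing $K_1 := 2/c_2$ forces $\mathsf{M}/K_1 \le \E^x[T]/2$ and hence ${\mathbb P}^x(T \ge \mathsf{M}/K_1) \ge \delta_0$; the lemma then follows with $\delta := (1-e^{-1})\delta_0$. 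The principal technical hurdle will be keeping all constants uniform in $x$ and $\rho$ throughout the Meyer decomposition; the space-dependence of $\phi(x,\cdot)$ (which is absent in \cite[Lemma 4.7(i)]{CKL}) forces repeated use of \eqref{A1} to transport exit-time expectations between nearby basepoints in $B(x,\rho)$, and of \eqref{A2} to move between the scales $\rho$ and $2\rho$ that arise from the chain $B(x,\rho) \subset B(y, 2\rho)$.
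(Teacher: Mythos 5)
Your strategy of deducing the exponential-moment bound from moment estimates on $T := \tau^{(\rho)}_{B(x,\rho)}$ via Paley--Zygmund is a genuinely different route from the paper's, which instead compares the \emph{survival functions} ${\mathbb P}^x(\tau_{B(x,\rho)} > t)$ and ${\mathbb P}^x(T > t)$. Your initial reduction to ${\mathbb P}^x(T \ge \mathsf{M}/K_1) \ge \delta'$, the control $\lambda(y) \le C_1C_3/\mathsf{M}$ for $y\in B(x,\rho)$, the bound $\sup_{y\in B(x,\rho)}\E^y[\tau_{B(x,\rho)}]\le C_1C_2\mathsf{M}$, and the resulting first-moment lower bound $\E^x[T]\ge c_2\mathsf{M}$ via the Meyer decomposition are all correct.

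The gap is the asserted upper bound $\sup_{y\in B(x,\rho)}\E^y[T] \le C_4\mathsf{M}$, on which the exponential tail, the second-moment bound $\E^x[T^2]\le C_5\mathsf{M}^2$, and hence the Paley--Zygmund step all rest. ``Repeating the computation'' does not give it: the Meyer decomposition yielded $\mathsf{M}\le \E^x[T](1+c_1C_1C_2)$, which is a one-sided inequality producing a lower bound on $\E^x[T]$; repeating it at $y$ gives lower bounds $\E^y[T]\ge c_2\E^y[\tau_{B(x,\rho)}]$, not upper bounds. Attempting an upper bound by decomposing $\E^y[T]$ at $T_1$ produces $S\le C_1C_2\mathsf{M}+(C_1C_3/\mathsf{M})S^2$ with $S:=\sup_z\E^z[T]$, a self-referential quadratic that cannot be closed without an a priori bound on $S$, and the implicit smallness assumption $C_1^2C_2C_3<1$ it would require is not part of the hypotheses. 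More fundamentally, \eqref{A1}--\eqref{A3} give no upper control at all on exit times of the \emph{truncated} process $X^{(\rho)}$: \eqref{A3} bounds the jump intensity only from above, so removing the large jumps could leave $X^{(\rho)}$ lingering in $B(x,\rho)$ indefinitely. The statement of the lemma is insensitive to that (large $T$ only helps), which is precisely why the paper does not try to bound $T$ from above. Instead it proves $\lvert{\mathbb P}^x(\tau_{B(x,\rho)}>t)-{\mathbb P}^x(T>t)\rvert\le c_5t/\mathsf{M}$ --- the Meyer correction is a function of the time horizon $t$ alone, not of the unknown $\E[T]$, so there is no circularity --- and combines this with the elementary lower bound ${\mathbb P}^x(\tau_{B(x,\rho)}>t)\ge c_3-c_4t/\mathsf{M}$ for the original process to get a direct lower bound on ${\mathbb P}^x(T>t_\rho)$ for a suitable small $t_\rho$. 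If you want to keep a moment-based argument, work with the truncated variable $T\wedge c\mathsf{M}$ (whose second moment is trivially $\le c^2\mathsf{M}^2$) and rerun the Meyer decomposition on $\tau_{B(x,\rho)}\wedge c\mathsf{M}$ to get a lower bound on $\E^x[T\wedge c\mathsf{M}]$, rather than asserting a global moment bound on $T$.
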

\begin{proof}  Let $x \in U$ and $0<\rho<3^{-1}\big( R_0 \wedge (C_0 \updelta_U(x))\big)$, and denote $\psi(x,r)= \E^x[\tau_{B(x,r)}]$. We follow the proof of \cite[Lemma 4.7(i)]{CKL}. By \eqref{A1} and \eqref{A2}, we have
$$
\sup_{z\in B(x,\rho)} \E^z\tau_{ B(x,\rho)} \le \sup_{z\in  B(x,\rho)} \E^z\tau_{B(z,2\rho)} \le c_1\sup_{z\in  B(x,\rho)} \E^z\tau_{B(z,\rho)}  \le c_2
\psi(x,\rho).
$$
Thus, by the same argument as that of \cite[(4.17)]{CKL}, there exist constants $c_3,c_4>0$ such that
\begin{equation}\label{e:l.3.8-1}
	{\mathbb P}^x(\tau_{ B(x,\rho)}>t) \ge c_3-\frac{c_4t}{
\psi(x,\rho)}, \quad t>0.
\end{equation} 
Moreover, by following the proof for  \cite[(4.18)]{CKL}, using \eqref{A1}, one can deduce that
\begin{equation}\label{e:l.3.8-2}
	\Big| {\mathbb P}^x(\tau_{ B(x,\rho)}>t) - {\mathbb P}^x(\tau_{ B(x,\rho)}^{(\rho)}>t) \Big| \le \frac{c_5t}{
\psi(x,\rho)}, \quad t>0.
\end{equation}

Let  $\delta=c_3/3$, $K_1=(c_4+c_5)\delta^{-1}\log(\delta^{-1})$ and $t_\rho=\delta 
\psi(x,\rho)/(c_4+c_5)$. Then by  \eqref{e:l.3.8-1} and \eqref{e:l.3.8-2},
\begin{align*}
	{\mathbb P}^x(\tau^{(\rho)}_{ B(x,\rho)}\le t_\rho)&=1 - {\mathbb P}^x( \tau_{ B(x,\rho)}> t_\rho) + {\mathbb P}^x( \tau_{ B(x,\rho)} > t_\rho)- {\mathbb P}^x( \tau_{ B(x,\rho)}^{(\rho)} > t_\rho) \\
	& \le 1-3\delta + \frac{(c_4+c_5)t_\rho}{
\psi(x,\rho)} = 1-2\delta.
\end{align*}
Hence, by the choice of $K_1$, we get that
\begin{align*}
	&\E^x \bigg[ \exp \Big( - \frac{K_1 }{
\psi(x,\rho)}  \tau^{(\rho)}_{B(x,\rho)} \Big)\bigg] \le \E^x \bigg[ \exp \Big( - \frac{K_1 }{
\psi(x,\rho)}  \tau^{(\rho)}_{B(x,\rho)} \Big) :  \tau_{ B(x,\rho)}^{(\rho)} \le t_\rho \bigg] \\
&\quad  +  \exp \Big( - \frac{K_1t_\rho }{
\psi(x,\rho)}   \Big)  
	\le {\mathbb P}^x( \tau_{ B(x,\rho)}^{(\rho)} \le t_\rho)  +  \exp \Big( - \frac{\delta K_1}{c_4+c_5}  \Big)\le 1 - 2\delta + \delta = 1-\delta.
\end{align*}
\end{proof}

Unlike \cite[Lemma 4.8(i)]{CKL}, we only get some polynomial bounds in the next lemma. But it is enough to prove Proposition \ref{p:EP} below.

\begin{lem}\label{l:3.9}
Suppose that \eqref{A1}, \eqref{A2} and \eqref{A3} hold.  Then, there exist constants $a_1,\theta_1>0$ such that for all $x \in U$ and $0<\rho \le r < 3^{-1}\big( R_0 \wedge (C_0 \updelta_U(x))\big)$,
	\begin{equation}\label{e:l.3.9}
		\E^x \bigg[ \exp\Big(-\frac{C_1K_1}{\E^x[\tau_{B(x,\rho)}]}\tau_{B(x,r)}^{(\rho)}  \Big)\bigg] \le a_1 \bigg( \frac{\rho}{r}\bigg)^{\theta_1},
	\end{equation}
where $C_1>0$ and $K_1>0$ are constants in \eqref{A1} and Lemma \ref{l:3.8} respectively.
\end{lem}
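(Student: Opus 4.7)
The plan is to iterate Lemma~\ref{l:3.8} along a deterministic chain of $\rho$-balls that $X^{(\rho)}$ must successively exit before leaving $B(x,r)$. Set $T := \E^x[\tau_{B(x,\rho)}]$ and $\lambda_0 := C_1 K_1/T$, and define $\sigma_0 := 0$ and $\sigma_k := \sigma_{k-1} + \tau^{(\rho)}_{B(X^{(\rho)}_{\sigma_{k-1}},\rho)} \circ \theta_{\sigma_{k-1}}$ for $k \ge 1$. Since every jump of $X^{(\rho)}$ has size at most $\rho$, one has $d(X^{(\rho)}_{\sigma_k}, X^{(\rho)}_{\sigma_{k-1}}) \le 2\rho$, and between consecutive stopping times the trajectory stays in $\bar B(X^{(\rho)}_{\sigma_{k-1}},\rho) \subset B(x,(2k-1)\rho)$. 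Taking $n := \lfloor r/(2\rho)\rfloor$ then gives the pathwise comparison $\sigma_n \le \tau^{(\rho)}_{B(x,r)}$, so it suffices to bound $\E^x[\exp(-\lambda_0 \sigma_n)]$. The range condition required to invoke Lemma~\ref{l:3.8} at each $X^{(\rho)}_{\sigma_{k-1}}$ is met because $r < 3^{-1}(R_0 \wedge (C_0 \updelta_U(x)))$ forces $\updelta_U(X^{(\rho)}_{\sigma_{k-1}}) \gtrsim \updelta_U(x)$ throughout.

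Writing $Y_k := X^{(\rho)}_{\sigma_{k-1}}$, Lemma~\ref{l:3.8} and the strong Markov property applied iteratively yield
\[\E^x\Bigl[\exp\Bigl(-K_1 \sum_{k=1}^n (\sigma_k-\sigma_{k-1})/\E^{Y_k}[\tau_{B(Y_k,\rho)}]\Bigr)\Bigr] \le (1-\delta)^n.\]
To relate the position-dependent normalizations $\E^{Y_k}[\tau_{B(Y_k,\rho)}]$ to the base value $T$, I combine \eqref{A1} applied at base $x$ with radius $r$ (which gives $\E^y[\tau_{B(y,r)}] \le C_1 \E^x[\tau_{B(x,r)}]$ for every $y \in B(x,r)$) with the upper-scaling property \eqref{e:scaling-zero} inherited from \eqref{A2}: this produces a constant $c_2 \ge 1$ such that $\E^y[\tau_{B(y,\rho)}] \le c_2 (r/\rho)^{\beta_2} T$ uniformly over $y \in B(x,r)$. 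Substituting this bound and invoking Markov's inequality then gives the tail estimate $\mathbb{P}^x(\sigma_n \le s) \le e^{\lambda^\ast s}(1-\delta)^n$ for every $s>0$, where $\lambda^\ast := K_1 (\rho/r)^{\beta_2}/(c_2 T)$.

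The concluding step is to convert the tail estimate into the desired Laplace-transform bound at $\lambda_0$ using $\E^x[\exp(-\lambda_0 \sigma_n)] \le e^{-\lambda_0 s} + \mathbb{P}^x(\sigma_n \le s)$ and optimizing $s$. For $r/\rho$ bounded above by a constant the trivial bound $1$ suffices after absorbing a multiplicative constant into $a_1$; for $r/\rho$ large, the optimization yields a polynomial bound of the form $a_1 (\rho/r)^{\theta_1}$, and $\sigma_n \le \tau^{(\rho)}_{B(x,r)}$ completes the proof. The main obstacle is this final optimization: the ratio $\lambda_0/\lambda^\ast = C_1 c_2 (r/\rho)^{\beta_2}$ grows polynomially in $r/\rho$, so when the upper scaling exponent $\beta_2$ in \eqref{e:scaling-zero} satisfies $\beta_2 \ge 1$ the naive choice of $s$ only recovers a constant bound. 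In that regime one has to iterate the chaining-plus-doubling argument along a dyadic sequence of radii $2^j \rho$ so that at each scale the local Laplace exponent is genuinely of order $\lambda_0$ rather than only $\lambda^\ast$; this scale-by-scale bootstrap constitutes the most delicate bookkeeping in the proof.
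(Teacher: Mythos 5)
The core of your plan---a linear chain of exit times from fixed-radius $\rho$-balls---is genuinely different from the paper's argument, but it has the gap you flag at the end, and that gap is not a minor bookkeeping issue: it is exactly what forces a different decomposition. Lemma~\ref{l:3.8} controls $\E^{Y_k}\big[\exp(-K_1\tau^{(\rho)}_{B(Y_k,\rho)}/\E^{Y_k}[\tau_{B(Y_k,\rho)}])\big]$ with the \emph{position-dependent} normalization $\E^{Y_k}[\tau_{B(Y_k,\rho)}]$. Since \eqref{A1} only compares mean exit times of balls of a \emph{common} radius centered at nearby points, for $y$ at distance of order $r\gg\rho$ from $x$ the best one can say about $\E^y[\tau_{B(y,\rho)}]$ is that it lies, up to constants, between $(\rho/r)^{\beta_2}\,\E^x[\tau_{B(x,\rho)}]$ and $\E^y[\tau_{B(y,r)}]\asymp\E^x[\tau_{B(x,r)}]$. (Note that for the Chernoff step you actually need the \emph{lower} bound on $\E^{Y_k}[\tau_{B(Y_k,\rho)}]$, not the upper bound you write; the resulting rate comes out as $\lambda^{**}\asymp \lambda_0 (r/\rho)^{\beta_2}$, not the small $\lambda^*$ you quote, but the qualitative conclusion is the same.) With $n\sim r/\rho$ steps and this $(r/\rho)^{\beta_2}$ slack in the rate, the optimization over $s$ only delivers a useful bound when $\beta_2<1$; for $\beta_2\ge 1$ (which is \emph{not} excluded by \eqref{A2}---indeed diffusive scaling has $\beta_2=2$) your argument collapses to a constant bound, as you acknowledge. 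So as written, the proposal does not prove the lemma in the stated generality, and the ``scale-by-scale bootstrap'' you gesture at is not an optional refinement but the missing proof.

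The paper handles this by chaining over balls of \emph{geometrically growing} radii rather than fixed $\rho$-balls: set $B_k=B(x,(2^k-1)\rho)$, $b_k=\sup_{B_k}u$ with $u(z)=\E^z[e^{-\lambda\tau^{(\rho)}_{B(x,r)}}]$, and pick near-maximizers $z_k\in B_k$ with $u(z_k)\ge(1-\delta')b_k$. Because $z_k\in B(x,(2^k-1)\rho)$ and the exit ball at step $k$ has the \emph{same} radius $(2^k-1)\rho$, condition \eqref{A1} applies directly at base $x$ to give $\E^x[\tau_{B(x,(2^k-1)\rho)}]\le C_1\E^{z_k}[\tau_{B(z_k,(2^k-1)\rho)}]$, and together with the monotonicity $\E^x[\tau_{B(x,\rho)}]\le\E^x[\tau_{B(x,(2^k-1)\rho)}]$ this gives $\E^{z_k}[e^{-\lambda\tau_k}]\le 1-\delta$ uniformly. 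Since jump sizes are $\le\rho$, the process exits $B(z_k,(2^k-1)\rho)$ into $B_{k+1}$, whence $(1-\delta')b_k\le b_{k+1}\E^{z_k}[e^{-\lambda\tau_k}]$ and iterating over $k\le n_0\sim\log_2(r/\rho)$ levels yields $u(x)\le((1-\delta)/(1-\delta'))^{n_0}\asymp(\rho/r)^{\theta_1}$. The point the geometric chaining buys is that the exit-time normalization never drifts away from $\E^x[\tau_{B(x,\rho)}]$ by more than a fixed constant, for \emph{any} $\beta_2>0$, so no lower-scaling assumption is needed. You would need to carry out precisely this bootstrap to close your proof; once you do, you will have reproduced the paper's argument.
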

\begin{proof} 
By taking $a_1$ larger than $6^{\theta_1}$ in \eqref{e:l.3.9}, we may  assume that $6\rho \le r$ without loss of generality. 
Fix $x \in U$ and $0<6 \rho \le r < 3^{-1}\big( R_0 \wedge (C_0 \updelta_U(x))\big)$, and let  $
\psi(x,s)=\E^x[\tau_{B(x,s)}]$ for $s>0$. 
 Let $\lambda=C_1K_1/
\psi(x,\rho)$, $\tau_0=\tau^{(\rho)}_{B(x,r)}$, $u(z)=\E^z[e^{-\lambda \tau_0}]$ and
\begin{equation*}
	B_k=B(x, (2^k-1)\rho) \quad  \text{and} \quad b_k= \sup_{y \in B_k} u(y), \quad k \ge 1.
\end{equation*}
Fix any $\delta' \in (0,\delta)$ where $\delta \in(0,1)$ is the constant in Lemma \ref{l:3.8}. For each $k \ge 1$, let $z_k \in B_k$ be a point such that $u(z_k) \ge (1-\delta') b_k$ and $\tau_k:=\tau^{(\rho)}_{B(z_k,(2^k-1)\rho)}$. Since jump sizes of  $X^{(\rho)}$ are at most $\rho$, it holds that either $X^{(\rho)}_{\tau_k} \in B(z_k, 2^k\rho) \subset B_{k+1}$ or $X_{\tau_k}^{(\rho)}=\partial$. Therefore by the strong Markov property, we have that for all $k \ge 1$,
\begin{align}\label{e:l.3.9-1}
	(1-\delta')b_k &\le u(z_k) = \E^{z_k}[e^{-\lambda \tau_0} \, ;\, \tau_0<\zeta]= \E^{z_k} [ e^{-\lambda \tau_k} e^{-\lambda(\tau_0-\tau_k)} \, ; \, \tau_k \le \tau_0<\zeta] \nn\\
	&= \E^{z_k} [ e^{-\lambda \tau_k}\, \E^{X^{(\rho)}_{\tau_k}} [e^{-\lambda \tau_0}] \, ; \, \tau_k \le \tau_0<\zeta] \le b_{k+1} \E^{z_k}[e^{-\lambda \tau_k}].
\end{align}

Let $n_0 \in \N$ be such that $(2^{n_0}-1)\rho \le r/3 < (2^{n_0+1}-1)\rho$. Using the monotonicity of $
s \mapsto \psi(x,s)$,  \eqref{A1} and Lemma \ref{l:3.8}, since $z_k \in B_k$, we get that for $k \le n_0$,
\begin{align*}
	\E^{z_k}[e^{-\lambda \tau_k}]& \le  \E^{z_k} \bigg[ \exp \Big( - \frac{C_1K_1 }{
\psi(x, (2^k-1)\rho)}  \tau^{(\rho)}_{B(z_k,(2^k-1)\rho)} \Big)\bigg]  \\
	&\le  \E^{z_k} \bigg[ \exp \Big( - \frac{K_1}{
\psi(z_k, (2^k-1)\rho)}  \tau^{(\rho)}_{B(z_k,(2^k-1)\rho)} \Big)\bigg] \le 1-\delta.
\end{align*}
Combining  with \eqref{e:l.3.9-1}, we conclude that 
\begin{align*}
	u(x) &\le b_1 \le \frac{1-\delta}{1-\delta'} b_2 \le... \le \left(\frac{1-\delta}{1-\delta'} \right)^{n_0} b_{n_0+1} \le \left(\frac{1-\delta}{1-\delta'} \right)^{n_0}  \le \frac{1-\delta'}{1-\delta} \left(\frac{3\rho}{r}\right)^{\log\frac{1-\delta'}{1-\delta}/\log 2}.
\end{align*}
\end{proof}

\noindent \textbf{Proof of Proposition \ref{p:EP}.} 
(i) It suffices to prove for the case when  $\phi(x,r)= \E^x[\tau_{B(x,r)}]$ in view of \eqref{e:phi}.  We follow the proof of \cite[Proposition 4.9(i)]{CKL}, but  with nontrivial modifications.

 Choose any  $x \in U$, $0< r < 3^{-1}\big( R_0 \wedge (C_0 \updelta_U(x))\big)$ and $t>0$. Let  $\beta_2$ and $C_U$ be the constants from \eqref{e:scaling-zero}. If $t\ge C_U^{-1} 4^{-2\beta_2}\phi(x,r)$, then by taking $c$ larger than $C_U4^{2\beta_2}$, \eqref{e:EP+}  holds true.  Thus, we assume that $t<C_U^{-1} 4^{-2\beta_2}\phi(x,r)$.

  Set $\rho:=r (C_Ut/\phi(x,r))^{1/(2\beta_2)}$. Then  $\rho \in [\phi^{-1}(x,t), r/4)$. Indeed, 
 since we have assumed $t<C_U^{-1} 4^{-2\beta_2}\phi(x,r)$, by \eqref{e:scaling-zero}, it holds that
  \begin{equation*}
  r/4 > \rho \ge r \left(\phi^{-1}(x,t)/r\right)^{1/2} = r^{1/2} \phi^{-1}(x,t)^{1/2} \ge \phi^{-1}(x,t).
  \end{equation*} 
Using \eqref{e:scaling-zero} and \eqref{A1}, we see that for every $z \in B(x,2r)$, 
\begin{equation}\label{e:z-rho}
\frac{1}{\phi(z,\rho)} \le \frac{C_U2^{\beta_2}}{\phi(z,2r)}\left( \frac{r}{\rho}\right)^{\beta_2} \le \frac{C_1C_U2^{\beta_2}}{\phi(x,2r)}\left( \frac{r}{\rho}\right)^{\beta_2}\le \frac{C_1C_U2^{\beta_2}}{\phi(x,r)}\left( \frac{r}{\rho}\right)^{\beta_2}.
\end{equation}
Define $J_1(x,dy)=J(x,dy) \1_{\{\rho \le d(x,y) <r/4\}}$ and $J_2(x,dy)=J(x,dy) \1_{\{d(x,y) \ge r/4\}}$. Then we get from 
 \eqref{A3} and \eqref{e:z-rho} that
\begin{equation}\label{e:p.3.5-1}
	\sup_{z \in B(x,r)} J_1(z, M_\partial) \le \sup_{z \in B(x,r)} \frac{C_3}{\phi(z,\rho)}  \le  \frac{c_1}{\phi(x,r)}  \left( \frac{r}{\rho}\right)^{\beta_2}.
\end{equation}
We also get from \eqref{A3}, \eqref{e:scaling-zero} and \eqref{A1} that
\begin{equation}\label{e:p.3.5-2}
	\sup_{z \in B(x,r)} J_2(z, M_\partial) \le \sup_{z \in B(x,r)} \frac{C_3}{\phi(z,r/4)} \le  \sup_{z \in B(x,r)} \frac{C_3C_U4^{\beta_2}}{\phi(z,r)}  \le  \frac{c_2}{\phi(x,r)}.  
\end{equation}
As in \cite{CKL}, we let $Y^1:=X^{(\rho)}$, $Y^2$ be a Markov process obtained from $Y^1$ by attaching jumps coming from $J^1(x,dy)$, and $Y^3$ be a Markov process obtained from $Y^2$ by attaching jumps coming from $J^2(x,dy)$. For $n \ge 1$, denote by $T^1_n$ and $T^2_n$ the time at which $n$-th extra jump attached to $Y^1$ and $Y^2$, respectively. Let $\wt \tau_{B(x,r)}:= \inf\{u>0:Y^3_u \in M_\partial \setminus B(x,r)\}$. By the Meyer's construction, the law of $(Y^3_s: s<\tau_B)$ is the same as that of $(X_s: s<\tau_B)$. Therefore, it holds that
\begin{align}\label{e:4.30}
	&{\mathbb P}^x(\tau_{B(x,r)} \le t) = {\mathbb P}^x(\wt \tau_{B(x,r)} \le t)\nn\\
	&=  {\mathbb P}^x(T^1_{2} \le \wt \tau_{B(x,r)} \le t, \,\wt \tau_{B(x,r)} < T^2_{1}  ) + {\mathbb P}^x(T^2_{1} \le \wt \tau_{B(x,r)} \le t) \nn\\
	&\qquad+  {\mathbb P}^x(\wt \tau_{B(x,r)} \le t, \, \wt \tau_{B(x,r)} <T^1_{2} \wedge T^2_{1} )=:I_1+I_2+I_3.
\end{align} 

Let $Z_1,Z_2$ and $Z_3$ be i.i.d. exponential random variables with rate parameter $1$. From the Meyer's construction,  using \eqref{e:p.3.5-1} and  \eqref{e:p.3.5-2}, respectively, we get that
$$
I_1 \le {\mathbb P}\big( \frac{c_1t}{\phi(x,r)} \left( \frac{r}{\rho} \right)^{\beta_2} \ge Z_1 + Z_2 \big)\le \frac{c_1^2\,t^2}{\phi(x,r)^2}  \left( \frac{r}{\rho}\right)^{2\beta_2}
$$
and
$$
I_2 \le {\mathbb P}\big( \frac{c_2t}{\phi(x,r)} \ge Z_3\big) \le \frac{c_2t}{\phi(x,r)}.
$$
 On the event $\{\wt\tau_{B(x,r)} \le t, \, \wt\tau_{B(x,r)} <  T^1_{2} \wedge T^2_{1} \}$,  using the triangle inequality, we see that
 \begin{align}\label{e:I3-jump}
 	r \le d(x, Y^3_{\wt \tau_{B(x,r)}})  &\le d(x, Y^3_{ \wt \tau_{B(x,r)}\land T^1_{1}-}) + d(  Y^3_{ \wt \tau_{B(x,r)} \land  T^1_{1}-},  Y^3_{ \wt \tau_{B(x,r)} \land T^1_{1}}) + d(  Y^3_{ \wt \tau_{B(x,r)}\land T^1_{1}},  Y^3_{ \wt \tau_{B(x,r)}})\nn\\
 	&\le d(x, Y^3_{ \wt \tau_{B(x,r)}\land T^1_{1}-})  + d(  Y^3_{ \wt \tau_{B(x,r)}\land T^1_{1}},  Y^3_{ \wt \tau_{B(x,r)}}) + \frac{r}{4}.
 \end{align}
 In the last inequality above, we used the fact  that the jump size of $Y^3$ at time $T^1_{1}$ is at most $r/4$. Denote by  $\theta^{Y^3}$  the shift operator with respect to $Y^3$. In view of the Meyer's construction, using  the strong Markov property, we obtain from \eqref{e:I3-jump} that
 \begin{align*}
 	I_3 &\le {\mathbb P}^x\Big(d(x, Y^3_{ \wt \tau_{B(x,r)}\land T^1_{1}-})>r/3, \, \wt \tau_{B(x,r)} \le t, \, \wt \tau_{B(x,r)} <T^1_{2} \wedge T^2_{1} \Big)\nn\\
 	&\quad + {\mathbb P}^x\Big(d(  Y^3_{ \wt \tau_{B(x,r)}\land T^1_{1}},  Y^3_{ \wt \tau_{B(x,r)}}) >r/3, \, \wt \tau_{B(x,r)} \le t, \, \wt \tau_{B(x,r)} <T^1_{2} \wedge T^2_{1} \Big) \\
 	&\le {\mathbb P}^x\Big( \tau^{(\rho)}_{B(x, r/3)}  \le  \wt\tau_{B(x,r)} \le t, \;  \wt\tau_{B(x,r)} < T^1_{2} \land T^2_{1} \Big) \nn\\
 	&\quad + {\mathbb P}^x\Big(\tau^{(\rho)}_{B(Y^3_{\wt \tau_{B(x,r)} \land T^1_1}, r/3)}     \circ \theta^{Y^3}_{\wt \tau_{B(x,r)}\land T^1_1} \le  \wt\tau_{B(x,r)} \le t, \;  \wt\tau_{B(x,r)} < T^1_{2} \land T^2_{1}  \Big) \nn\\
 	& \le 2 \sup_{z \in B(x,5r/4)} {\mathbb P}^z \big(\tau^{(\rho)}_{B(z, r/3)}  \le t\big).
 \end{align*}
 In the second inequality above, we used the  fact that $Y^3_{\wt \tau_{B(x,r)}\land T^1_1} \in B(x, r + r/4)$. Therefore, we obtain from Markov inequality, \eqref{e:z-rho} and  Lemma \ref{l:3.9} that
\begin{align*}
	I_3& \le  2 \sup_{z \in B(x, 5r/4)} \E^z \bigg[ \exp\left(\frac{C_1K_1t}{\phi(z,\rho)} -\frac{C_1K_1}{\phi(z,\rho)}\tau_{B(z,r/3)}^{(\rho)}  \right)\bigg]\\
	& \le  2\exp \bigg(\frac{c_3t}{\phi(x,r)}\left(\frac{r}{\rho}\right)^{\beta_2} \bigg) \sup_{z \in B(x, 5r/4)} \E^z \bigg[ \exp\left( -\frac{C_1K_1}{\phi(z,\rho)}\tau_{B(z,r/3)}^{(\rho)}  \right)\bigg]\\
	&\le c_4 \bigg( \frac{\rho}{r}\bigg)^{\theta_1}\exp \bigg(\frac{c_3t}{\phi(x,r)}\left(\frac{r}{\rho}\right)^{\beta_2} \bigg),
\end{align*}
where $\theta_1,C_1, K_1>0$ are the constants in \eqref{e:l.3.9}. 
 Finally, since $t<C_U^{-1}\phi(x,r)$, we deduce from the definition of $\rho$ and  \eqref{e:4.30} that
\begin{align*}
	{\mathbb P}^x(\tau_{B(x,r)} \le t) &\le    \frac{c_1^2\,t^2}{\phi(x,r)^2}   \frac{\phi(x,r)}{C_Ut} +  \frac{c_2t}{\phi(x,r)} +  c_4 \bigg( \frac{C_Ut}{\phi(x,r)}\bigg)^{\theta_1/(2\beta_2)}\exp \bigg(\frac{c_3t}{\phi(x,r)}\left(\frac{\phi(x,r)}{C_Ut}\right)^{1/2} \bigg) \\
	& \le \frac{(c_1^2C_U^{-1}+c_2)t}{\phi(x,r)}  + e^{c_3} c_4 \bigg( \frac{C_Ut}{\phi(x,r)}\bigg)^{\theta_1/(2\beta_2)} \le c_5 \bigg( \frac{t}{\phi(x,r)}\bigg)^{((2\beta_2) \wedge \theta_1)/(2\beta_2)} .
\end{align*} 

(ii) The result follows from 
\cite[Proposition 4.9(ii) and Corollary 4.10]{CKL}.
\qed

An event $G$ is called \textit{shift-invariant}  if $G$ is a tail event (i.e. $\cap_{t>0}^\infty \sigma(X_s:s>t)$-measurable), and ${\mathbb P}^y(G)= {\mathbb P}^y(G \circ \theta_t)$ for all $y \in M$ and $t>0$.

The following zero-one law for shift-invariant events is established in \cite[Proposition 4.15]{CKL}. 

\begin{prop}\label{p:law01}
	Suppose that \VRDi \ holds. If \eqref{B1}, \eqref{B2}, \eqref{B3} and \eqref{B4+} hold, then for every shift-invariant $G$, it holds either ${\mathbb P}^z(G)=0$ for all $z \in M$ or else ${\mathbb P}^z(G) = 1$ for all $z \in M$.
\end{prop}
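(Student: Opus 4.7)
The plan is to establish the dichotomy in two stages: first, prove that the bounded function $h(x) := {\mathbb P}^x(G)$ is constant on $M$; second, force that constant to lie in $\{0,1\}$ by applying L\'evy's martingale convergence theorem to the tail event $G$. The harmonicity $h(x) = \E^x[h(X_t)]$ for every $t > 0$ is immediate from the shift-invariance ${\mathbb P}^y(G) = {\mathbb P}^y(\theta_t^{-1}(G))$ together with the Markov property, so $h$ is a bounded harmonic function taking values in $[0,1]$.

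The key analytic step is showing $\mathrm{osc}(h) := \sup_M h - \inf_M h = 0$ via a uniform coupling centered at the base point $o$ (so ${\mathsf{d}}(o) = 1$). For $R$ larger than some threshold depending on $R_\infty'$ and $\eta$, I would apply \eqref{B4+} to the ball $B(o, R/\eta^2)$ to obtain
\begin{equation*}
p^{B(o,R/\eta^2)}(\phi(R/\eta), x, v) \ge \frac{c_*}{V(o, R/\eta^2)} \quad \text{for all } x, v \in B(o, R).
\end{equation*}
Since $X$ is conservative by Proposition \ref{p:EP}(ii), this Dirichlet lower bound produces, for each $x \in B(o, R)$, a Doeblin-type decomposition
\begin{equation*}
{\mathbb P}^x(X_{\phi(R/\eta)} \in \cdot) = \delta_R \, \nu + (1 - \delta_R) \, \kappa^x,
\end{equation*}
where $\nu$ is the probability measure proportional to $\1_{B(o,R)}\, d\mu$ (crucially independent of $x$), $\kappa^x$ is a probability measure, and $\delta_R := c_* V(o,R)/V(o, R/\eta^2)$. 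The reverse-doubling half of \VRDi forces $\delta_R \ge c_* c_\mu \eta^{2 d_2}/C_\mu =: \delta_0 > 0$ uniformly in $R$. Feeding this decomposition back into harmonicity yields
\begin{equation*}
|h(x) - h(y)| = (1 - \delta_R)\bigg|\int h \, d\kappa^x - \int h \, d\kappa^y\bigg| \le (1 - \delta_0)\, \mathrm{osc}(h) \quad \text{for all } x, y \in B(o, R),
\end{equation*}
and taking the supremum over $x, y \in B(o, R)$, then letting $R \to \infty$ so that the left-hand side saturates $\mathrm{osc}(h)$, gives $\mathrm{osc}(h) \le (1 - \delta_0)\mathrm{osc}(h)$, forcing $\mathrm{osc}(h) = 0$ and hence $h \equiv c$ for some $c \in [0,1]$.

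To conclude, I would invoke L\'evy's 0-1 law on the closed martingale $M_t := \E^x[\1_G \mid \FF_t]$, which converges ${\mathbb P}^x$-a.s.\ to $\1_G$ as $t \to \infty$ because $G \in \FF_\infty$ is a tail event. The Markov property together with the (essential) shift-invariance of $G$ identifies $M_t$ with $h(X_t) \equiv c$, so $\1_G = c$ ${\mathbb P}^x$-a.s., forcing $c \in \{0,1\}$ and ${\mathbb P}^z(G) = c$ for all $z \in M$. I expect the uniform coupling step to be the main obstacle: \eqref{B4+} only furnishes a Dirichlet (not full) heat kernel lower bound, and the Doeblin constant $\delta_0$ must be extracted independently of $R$; it is precisely the reverse-doubling inequality in \VRDi that keeps the ratio $V(o, R)/V(o, R/\eta^2)$ bounded below as $R \to \infty$, and without it the contraction would collapse and the iteration over scales would break down.
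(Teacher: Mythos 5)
Your argument is correct in substance and follows the standard route: the paper itself does not reprove this proposition but cites \cite[Proposition 4.15]{CKL}, and the Doeblin-minorization of ${\mathbb P}^x(X_{\phi(R/\eta)}\in\cdot)$ from the Dirichlet lower bound \eqref{B4+}, followed by the oscillation contraction for the bounded harmonic function $h(x)={\mathbb P}^x(G)$ and L\'evy's $0$--$1$ law, is exactly the kind of argument that proof rests on. Two small points deserve correction. First, the uniform lower bound $\delta_R=c_*V(o,R)/V(o,R/\eta^2)\ge \delta_0>0$ comes from the \emph{upper} (doubling) half of \VRDi, i.e.\ $V(o,R/\eta^2)\le C_\mu\eta^{-2d_2}V(o,R)$; the reverse-doubling half bounds this ratio from \emph{above}, so your closing remark attributes the key inequality to the wrong half of the hypothesis (harmless, since both halves are assumed). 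Second, the identification $\E^x[\1_G\mid\FF_t]=h(X_t)$ in the last step requires the a.s.\ set identity $\1_G=\1_{G\circ\theta_t}$ ${\mathbb P}^x$-a.s., not merely the equality of probabilities ${\mathbb P}^x(G)={\mathbb P}^x(G\circ\theta_t)$ appearing in the definition; you flag this with ``(essential) shift-invariance,'' and indeed in the application (the proof of Corollary \ref{c:infinf}) the events $E$ are shown to satisfy $E\subset E\circ\theta_u$ and $E\circ\theta_u\subset E$ up to null sets, which is what the martingale step actually consumes. With these two clarifications the proof is complete.
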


Now, we are ready to prove our main results in Section \ref{s:intro}.

\bigskip

\noindent \textbf{Proof of Theorem \ref{inf}.}  
In view of Remark \ref{r:a}, it suffices to prove for the case when $\phi(x,r)=\E^x[\tau_{B(x,r)}]$. We claim that there exist constants $q_2 \ge q_1>0$ such that for all $x \in U$, 
 \begin{equation}\label{e:inf0}
  \limsup_{r\to0} \frac{\tau_{B(x,r)}}{\phi(x,r) \log|\log \phi(x,r)|} \in [q_1, q_2],\qquad {\mathbb P}^x\mbox{-a.s.}
 \end{equation}

 We follow the main idea of the proof in \cite[Theorem 3.7]{KKW17}  and will prove upper and lower bound of the  limsup behavior in  \eqref{e:inf0} separately.
 
  Pick $x \in U$.  Let $C_7>0$ be the constant in  \eqref{A4}. We set
  \begin{equation*}
l_n:=\phi^{-1}(x,e^{-n}) \;\; \text{ and } \;\;  A_n:=\Big\{ \sup_{l_{n+1}\le r\le l_n} \frac{\tau_{B(x,r)}}{\phi(x,r) \log|\log \phi(x,r)|}\ge \frac{2e }{C_7} \Big\}, \quad n \ge 3.
 \end{equation*}
Since $\lim_{r \to 0} \phi(x,r)=0$ by \eqref{A2},  we have $\lim_{n \to \infty}l_n=0$. Then using \eqref{A4}, we get that for all $n$ large enough, 
\begin{align*}
	{\mathbb P}^x(A_n) &\le {\mathbb P}^x \bigg( \tau_{B(x,l_n)}\ge \frac{2e}{C_7} \phi(x,l_{n+1}) \log|\log \phi(x,l_{n+1})|  \bigg) \\
	&\le {\mathbb P}^x \bigg( \frac{\tau_{B(x,l_n)}}{\phi(x,l_n)}\ge \frac{2\log n}{C_7}  \bigg) \le C_6 e^{C_7} n^{-2}. 
\end{align*}
Thus, $\sum_{n=3}^\infty {\mathbb P}^x(A_n)<\infty$. Then by the Borel-Cantelli lemma, the upper bound in \eqref{e:inf0} holds with $q_2=2e/C_7$.

 Now, we prove the lower bound in \eqref{e:inf0}. Let $C_1,C_5$ be the constants in \eqref{A1} and \eqref{A4}, and set
\begin{equation*}
 r_n:=\phi^{-1}(x,e^{-n^2}) \;\; \text{ and } \;\; u_n:=\frac{\phi(x,r_{n}) \log|\log \phi(x,r_n)|}{8C_1C_5}, \quad n \ge 3.
\end{equation*}
We also define for $n \ge 3$,
\begin{align*}
E_n&:=\big\{ \sup_{0<s\le u_{n+1}}d(x,X_s) \ge r_n\big \}, \qquad 
F_n:=\big\{   \sup_{u_{n+1} < s\le u_{n}}d(X_{u_{n+1}}, X_s) \ge r_n\big\},\\
G_n&:= \big\{  \sup_{0 < s\le u_{n} } d(x,X_s) \ge 2r_n  \big\}, \qquad H_n:= \cap_{k=n}^{2n} G_k = \Big\{ \sup_{n \le k \le 2n}\frac{\tau_{B(x, 2r_k)}}{u_{k}} \le 1\Big\}.
\end{align*} 
Note that  $G_n \subset E_n \cup F_n$ for all $n \ge 3$ by the triangle inequality. Thus, we have
 \begin{equation}\label{e:decomA}
 	H_n  \subset  \cap_{k=n}^{2n}  \big(  E_k \cup  (F_k \setminus E_{k})  \big) \subset \big(\cup_{k=n}^{2n} E_k \big) \cup  \big( \cap_{k=n}^{2n} (F_k \setminus E_k) \big). 
 \end{equation}
By Proposition \ref{p:EP}(i), we have that for all large enough $k$, 
\begin{align}\label{e:inf1}
{\mathbb P}^x (E_k) &= {\mathbb P}^x( \tau_{B(x, r_k)} \le u_{k+1}) \le  c_1 \bigg( \frac{u_{k+1}}{\phi(x,r_k)} \bigg)^\theta =
  c_2 e^{-2\theta k} (\log (k+1))^\theta.
\end{align} 
Next, by \eqref{A1} and \eqref{A4}, we have that for all $k$ large enough  and  $z \in B(x, r_k)$,
\begin{equation}\label{e:F-1}
{\mathbb P}^z (\tau_{B(z,r_k)} \ge u_{k} ) \ge {\mathbb P}^z\bigg( \tau_{B(z,r_k)}\ge \frac{ \phi(z,r_{k}) \log k }{4C_5}\bigg) \ge 
 C_4e^{-C_5} k^{-1/4} \ge 1-\exp \big( -C_4e^{-C_5}k^{-1/4}\big)
\end{equation}
and hence
\begin{equation}\label{e:F-2}
 {\mathbb P}^z\big(\sup_{0<s\le u_{k}-u_{k+1} } d(z,X_s) \ge r_k 
\big)  \le 1 -  {\mathbb P}^z (\tau_{B(z,r_k)} \ge u_{k} )  \le  \exp \big( -C_4e^{-C_5}k^{-1/4}\big).
\end{equation}
Thus, using the Markov property, we get that for all  $n$ large enough,
\begin{align}\label{e:F-3}
	&{\mathbb P}^x \big(\cap_{k=n}^{2n}  (F_k \setminus E_k)\big) 
	\le 
	\E^x {\mathbb P}^x\big(\cap_{k=n}^{2n} F_k, \, X_{u_{j+1}} \in B(x,r_j), \,\, n \le j \le 2n \, | \, \FF_{u_{n+1}}\big) \nn\\[6pt]
	&  \le {\mathbb P}^x \big(\cap_{k=n+1}^{2n} F_k, \, X_{u_{j+1}} \in B(x,r_j), \,\, n+1 \le j \le 2n \big)   \sup_{z \in B(x, r_n)} {\mathbb P}^z\big(\sup_{0<s\le u_n - u_{n+1} } d(z,X_s)  \ge r_n  \big)  \nn\\
	& \le   \exp \big( -C_4e^{-C_5}n^{-1/4}\big)\,\E^x {\mathbb P}^x \big(\cap_{k=n+1}^{2n} F_k, \, X_{u_{j+1}} \in B(x,r_j), \,\, n+1 \le j \le 2n \, | \, \FF_{u_{n+2}}\big) \nn\\
	& \le...\le  \prod_{k=n}^{2n} \exp \big( -C_4e^{-C_5}k^{-1/4}\big) \le \prod_{k=n}^{2n} \big( -C_4e^{-C_5}(2n)^{-1/4}\big)  \le \exp(\, -  c_3 n^{3/4}).
\end{align}
Therefore,  by combining the above with \eqref{e:decomA} and \eqref{e:inf1}, we get that for all  $n$ large enough,
\begin{align*} 
	{\mathbb P}^x (H_n) &\le \sum_{k=n}^{2n}{\mathbb P}^x(E_k)+ {\mathbb P}^x \left(\cap_{k=n}^{2n} (F_k \setminus E_k)\right) \le c_2e^{-2\theta n} (n+1)(\log (2n+1))^\theta  +   \exp(\, -  c_3 n^{3/4}),
\end{align*}
which yields  $\sum_{n=3}^\infty {\mathbb P}^x(H_n)<\infty$. By the Borel-Cantelli lemma,  it follows that
\begin{equation}\label{e:liminflower}
 {\mathbb P}^x \big(\limsup_{k \to \infty} \frac{\tau_{B(x,2r_k)}}{u_k}\ge 1\big)=1.
\end{equation}
Since $\lim_{k \to \infty} r_k=0$ and 
$$
u_k \ge \frac{\phi(x, 2r_k) \log |\log \phi(x, 2r_k)|}{2^{3+\beta_2}C_1C_5C_U} 
$$
for all $k$ large enough by \eqref{e:scaling-zero}, we conclude from \eqref{e:liminflower} that the lower bound in \eqref{e:inf0} holds.

\smallskip

Now, we claim that for all $x \in U$, it holds that
\begin{equation}\label{e:inf3}
\liminf_{t \to 0} \frac{\phi\big(x,\sup_{0<s \le t}d(x,X_s)\big)}{t/\log |\log t|} \in [e^{-1}q_2^{-1}, q_1^{-1}],\qquad
{\mathbb P}^x\mbox{-a.s.}
\end{equation}
Note that once we prove \eqref{e:inf3}, the proof is finished thanks to the Blumenthal's zero-one law. Also, since $q_1$ and $q_2$ in \eqref{e:inf3} can be chosen by $C$ and the constants $q_1$ and $q_2$ with respect to $\E^x[\tau_{B(x,r)}]$, Remark \ref{r:a} is also verified.

Here, we show \eqref{e:inf3}. Recall that $l_n:=\phi^{-1}(x,e^{-n})$. Set $t_n:=\phi(x, l_n) \log|\log \phi(x, l_n)|=e^{-n}\log n$. Choose any $\delta>0$.
By \eqref{e:inf0}, for ${\mathbb P}^x$-a.s. $\omega$, there exists $N =N(\omega)$ such that $
\tau_{B(x,l_n)} \le  (q_2+\delta) t_n$  for all $n \ge N$. Thus, by \eqref{e:scaling-zero}, it holds that for  ${\mathbb P}^x$-a.s. $\omega$,
\begin{align*}
\liminf_{t \to 0} \frac{\phi\big(x,\sup_{0<s \le t}d(x,X_s)\big)}{ t/\log |\log t|} &\ge \liminf_{n \to \infty} \inf_{  t \in [(q_2+\delta)t_n, (q_2+\delta)t_{n-1}]} \frac{\phi\big(x,\sup_{0<s \le t}d(x,X_s)\big)}{ t/\log |\log t|}\\
&\ge \liminf_{n \to \infty}\frac{\phi\big(x,\sup_{0<s \le (q_2+\delta)t_n}d(x,X_s)\big)}{ (q_2+\delta)t_{n-1}/ \log |\log (q_2+\delta)t_{n-1}|}\\
& \ge \liminf_{n \to \infty} \frac{\phi(x, l_n)}{ (q_2+\delta)e^{-(n-1)} \log(n-1) / \log n } =  \frac{1}{e(q_2+\delta)}.
\end{align*}
On the other hand, we also get from  \eqref{e:inf0} that for ${\mathbb P}^x$-a.s. $\omega$, there exists a decreasing sequence $(\wt r_n)_{n \ge 1} = (\wt r_n(\omega))_{n \ge 1}$ converging to zero such that
\begin{equation*}
\tau_{B(x,\wt r_n)}(\omega) \ge (q_1-\delta) \phi(x, \wt r_n) \log|\log \phi(x, \wt r_n)|=:\wt t_{\delta, n} \quad \text{for all} \;\; n \ge 1.
\end{equation*}
It follows that ${\mathbb P}^x$-a.s.,
\begin{align*}
\liminf_{t \to 0} \frac{\phi\big(x,\sup_{0<s \le t}d(x,X_s)\big)}{ t/\log |\log t|} &\le \liminf_{n \to \infty}  \frac{\phi\big(x,\sup_{0<s \le \wt t_{\delta, n}}d(x,X_s)\big)}{\wt t_{\delta, n}/\log |\log \wt t_{\delta, n}|} \\
& \le \liminf_{n \to \infty}  \frac{\phi(x, \wt r_n)}{\wt t_{\delta, n}/\log |\log \wt t_{\delta, n}|}\\
&= \liminf_{n \to \infty}  \frac{\phi(x, \wt r_n)}{ (q_1-\delta) \phi(x, \wt r_n)} \frac{\log |\log \wt t_{\delta,n}| }{\log |\log \phi(x, \wt r_n)|}=\frac{1}{q_1-\delta}.
\end{align*}
Since $\delta$ can be arbitrarily small, we obtain  \eqref{e:inf3}. The proof is complete. \qed

\noindent \textbf{Proof of Corollary \ref{c:inf}.} Using \eqref{e:scaling-zero} and \eqref{e:scaling-zero-lower}, we can see from  \eqref{e:inf3} that there exist constants $c_2 \ge c_1>0$ such that for all $x \in U$, $\liminf_{t \to 0} \sup_{0<s \le t}d(x,X_s)/\phi^{-1}(x,t/\log |\log t|) \in [c_1, c_2]$, ${\mathbb P}^x$-a.s. 
Then using the Blumenthal's zero-one law again, we obtain the result. \qed

\noindent \textbf{Proof of Theorem \ref{t:infinf}.} By \eqref{e:phi}, it suffices to prove the theorem with $\phi(r):=\E^o[\tau_{B(o,r)}]$.  We follow the proof of Theorem \ref{inf} with some modifications. To obtain the desired result,  by repeating the arguments for obtaining \eqref{e:inf3} and using \eqref{e:scaling-infty}, it is enough to show that  there exist constants $q_4 \ge q_3>0$ such that for all $x,y \in M$,
 \begin{equation}\label{e:inf0'}
\limsup_{r\to \infty} \frac{\tau_{B(x,r)}}{\phi(r) \log\log \phi(r)} \in [q_3, q_4], \qquad 
{\mathbb P}^y\mbox{-a.s.}
\end{equation}
By \eqref{e:scaling-infty} and the monotone property of $\phi(r)$, we have that, for all $x,y \in M$, since $d(x,y)<\infty$,
 \begin{equation*}
\limsup_{r\to \infty} \frac{\tau_{B(x,r)}}{\phi(r) \log\log \phi(r)} \le \limsup_{r\to \infty} \frac{\tau_{B(y,r+d(x,y))}}{\phi(r) \log\log \phi(r)} \le 2^{\beta_2}  C_U\limsup_{r\to \infty} \frac{\tau_{B(y, 2r)}}{\phi(2r) \log\log \phi(2r)} 
\end{equation*}
and
 \begin{equation*}
\limsup_{r\to \infty} \frac{\tau_{B(x,r)}}{\phi(r) \log\log \phi(r)}  \ge \limsup_{r\to \infty} \frac{\tau_{B(y,r-d(x,y))}}{\phi(r) \log\log \phi(r)}  \ge 2^{-\beta_2} C_U^{-1}\limsup_{r\to \infty} \frac{\tau_{B(y, r/2)}}{\phi(r/2) \log\log \phi(r/2)}.
\end{equation*}
Thus, to get \eqref{e:inf0'}, it is enough to prove that  for all $y \in M$,
 \begin{equation}\label{e:inf0''}
\limsup_{r\to \infty} \frac{\tau_{B(y,r)}}{\phi(r) \log\log \phi(r)} \in [2^{\beta_2}C_Uq_3, \, 2^{-\beta_2}C_U^{-1}q_4],\qquad
{\mathbb P}^y\mbox{-a.s.}
\end{equation}

Let $y \in M$.   With the constant $C_7$ in  \eqref{B4}, we define
\begin{equation*}
\wt	l_n=\phi^{-1}(e^{n}) \;\; \text{ and } \;\; \wt A_n=\Big\{ \sup_{\wt l_{n}\le r\le \wt l_{n+1}} \frac{\tau_{B(y,r)}}{\phi(r) \log\log \phi(r)}\ge \frac{2e }{C_7} \Big\}, \quad n \ge 3.
\end{equation*}
Note that $\lim_{n \to \infty}\wt l_n=\infty$ by \eqref{B2} (see Remark \ref{r:basic}(iii)). Hence, $\wt l_n>R_\infty {\mathsf{d}}(y)^\ups$ for all $n$ large enough. Then by \eqref{B4}, we get that for all $n$ large enough, 
\begin{align*}
	{\mathbb P}^y(\wt A_n) &\le {\mathbb P}^y \bigg( \tau_{B(y,\wt l_{n+1})}\ge \frac{2e}{C_7} \phi(\wt l_{n}) \log\log \phi(\wt l_{n})  \bigg) ={\mathbb P}^y \bigg( \frac{\tau_{B(y,\wt l_{n+1})}}{\phi(\wt l_{n+1})}\ge \frac{2\log n}{C_7}  \bigg) \le C_6 e^{C_7} n^{-2}.
\end{align*}
Using the Borel-Cantelli lemma, we deduce that the upper bound in \eqref{e:inf0''} holds true.

To prove the lower bound, we set 
$$
m_n:=\phi^{-1}(e^{n^2}) \;\; \text{ and } \;\; s_n:= \frac{\phi(m_n) \log \log \phi(m_n)}{8C_1C_5},  \quad n \ge 3,
$$
where $C_1,C_5$ are the constants in \eqref{B1} and \eqref{B4}.  We also let
\begin{align*}
\wt E_n&: =\big\{\sup_{0 < s\le s_{n-1} } d(y,X_s) \ge m_n\big\}, \qquad \wt F_n:=\big\{\sup_{s_{n-1} < s\le s_{n}} d(X_{s_{n-1}}, X_s) \ge  m_n \big\},\\
\wt G_n&:= \big\{  \sup_{0 < s \le s_{n} } d(y,X_s) \ge 2 m_n  \big\}, \qquad \wt H_n:= \cap_{k=n}^{2n} \wt G_k = \Big\{ \sup_{n \le k \le 2n}\frac{\tau_{B(y, 2m_k)}}{s_{k}} \le 1\Big\}. 
\end{align*}
Then for all $n$,  $\wt G_n \subset \wt E_n \cup \wt F_n$ by the triangle inequality so that $\wt H_n \subset (\cup_{k=n}^{2n} \wt E_k) \cup  (\cap_{k=n}^{2n} (\wt F_k \setminus \wt E_k))$.

First, using Proposition \ref{p:EP}(ii) (with $\ups_1 = \sqrt \ups<1$) and \eqref{e:scaling-infty} twice,  we get that for all $n$ large enough, 
\begin{align}\label{e:wtF}
&{\mathbb P}^y (\wt E_n) \le  {\mathbb P}^x\big( \tau_{B(x,  m_n)} \le s_{n-1} + \phi(2  m_n^{\sqrt\ups})\big)\le  c_1\frac{s_{n-1} + \phi(2 m_n^{\sqrt\ups})}{\phi(m_n)}\nn\\
& \le  c_2 e^{-2n} \log n  + c_2 m_n^{-(1-\sqrt\ups)\beta_1}\le  c_2 e^{-2n} \log n  + c_3 R_\infty \bigg(\frac{e^{n^2}}{\phi(2R_\infty)} \bigg)^{-(1-\sqrt \ups) \beta_1/\beta_2} \le c_4 e^{-n}.
\end{align}

Next, we note that  since  $\ups<1$ and $\lim_{n \to \infty}m_n=\infty$,  for all $n$ large enough and $z \in B(y, m_n)$,  
$$
R_\infty {\mathsf{d}}(z)^\ups \le R_\infty {\mathsf{d}}(y)^\ups + R_\infty d(y,z)^\ups < m_n/2 + R_\infty m_n^{\ups} <m_n.
$$ 
Hence, by following the calculations \eqref{e:F-1}, \eqref{e:F-2} and \eqref{e:F-3}, using \eqref{B1}, \eqref{B4} and the Markov property, we get that for all $n$ large enough,
\begin{align*}
&{\mathbb P}^y \big(\cap_{k=n}^{2n}  (\wt F_k \setminus \wt E_{k})\big)  \le {\mathbb P}^x \big(\cap_{k=n}^{2n} \wt F_k, \,  X_{s_{j-1}} \in B(y, m_{j}), n \le j \le 2n \big) \nn\\[6pt]
&  \le {\mathbb P}^y \big(\cap_{k=n}^{2n-1} \wt F_k, \,   X_{s_{j-1}} \in B(y, m_{j}), n \le j \le 2n-1 \big)\hskip-.1in  \sup_{z \in B(y, m_{2n})}\hskip-.1in{\mathbb P}^z\hskip-.02in\big( \hskip-.1in\sup_{0<s\le s_{2n} - s_{2n-1} } d(z,X_s) \ge m_{2n}  \big)  \nn\\
& \le...\le  \prod_{k=n}^{2n} \exp(-C_4 e^{-C_5}k^{-1/4})  \le \exp(-c_5n^{3/4}).
\end{align*}

By combining the above with \eqref{e:wtF}, we get  
$$\sum_{n=1}^\infty {\mathbb P}^y(\wt H_n) \le \sum_{n=1}^\infty (\sum_{k=n}^{2n} {\mathbb P}^y(\wt E_k)  +{\mathbb P}^y (\cap_{k=n}^{2n} (\wt F_k \setminus \wt E_k)))<\infty.$$
 Hence  ${\mathbb P}^y(\limsup \wt H_n)=0$ by  the Borel-Cantelli lemma. Since  $\lim_{k \to \infty}m_k=\infty$ and
$$s_k \ge \frac{\phi(2m_k) \log \log \phi(2m_k)}{2^{4 + \beta_2}C_1C_5C_U}$$
for all $k$ large enough by \eqref{e:scaling-infty}, we get the lower bound in \eqref{e:inf0''}. The proof is complete.  \qed

\noindent \textbf{Proof of Corollary \ref{c:infinf}.} 
By Proposition \ref{r:E}(ii) and Theorem \ref{t:infinf}, the  liminf law \eqref{20} holds under the current setting. Thus, by   Proposition \ref{p:law01}, it suffices to show that for every $x \in M$ and $\lambda>0$, 
$$E=E(x, \lambda) :=\Big\{\liminf_{t \to \infty} \frac{\phi\big(\sup_{0 < s \le t} d(x,X_s)\big)}{t/\log \log t} \ge \lambda  \Big\}$$ is  a shift-invariant event. 

Let $\lambda, u>0$ and $x,y \in M$.   Observe that by the Markov property,
$$
E \circ \theta_u=\Big\{\liminf_{t \to \infty} \frac{\phi\big(\sup_{0 < s \le t} d(x,X_{s+u})\big)}{t/\log \log t} \ge \lambda  \Big\}.$$
Since $X$ is conservative by Proposition \ref{p:EP}(ii), for all $t>0$, it holds that $\sup_{0<s\le t}d(x,X_s) < \infty$, ${\mathbb P}^y$-a.s. Hence, since $\phi$ is positive, we see that for all $t>0$,
\begin{equation*}
\phi\big(\sup_{0 < s \le t+u} d(x,X_{s}) \big) = \phi\big(\sup_{s \in (u, t+u] \, \cup (0, u]} d(x,X_{s}) \big) \le \phi\big(\sup_{0 < s \le t} d(x,X_{s+u})\big) + \phi\big(\sup_{0<s \le u}d(x, X_s) \big).
\end{equation*}
Therefore, we get that for ${\mathbb P}^y$-a.s. $\omega \in E$,
\begin{align*}
&\liminf_{t \to \infty} \frac{\phi\big(\sup_{0 < s \le t} d(x,X_{s+u})\big)}{t/\log \log t} \\
&\ge \liminf_{t \to \infty} \frac{\phi\big(\sup_{0 < s \le t+u} d(x,X_{s}) \big)}{(t+u)/\log \log (t+u)}\frac{ (t+u)/\log \log (t+u)}{t/\log \log t} - \limsup_{t \to \infty} \frac{\phi\big(\sup_{0<s \le u}d(x, X_s)\big)}{t/\log \log t}\ge \lambda.
\end{align*}
On the other hand, for every  $\omega \in E \circ \theta_u$, we see that
\begin{align*}
&\liminf_{t \to \infty} \frac{\phi\big(\sup_{0 < s \le t} d(x,X_{s})\big)}{t/\log \log t} = \liminf_{t \to \infty} \frac{\phi\big(\sup_{0 < s \le t+u} d(x,X_{s})\big)}{(t+u)/\log \log (t+u)}\\
&\ge \liminf_{t \to \infty} \frac{\phi\big(\sup_{0 < s \le t} d(x,X_{s+u})  \big)}{t/ \log \log t}\frac{t/\log \log t} { (t+u)/\log \log (t+u)}\ge \lambda.
\end{align*}
Hence, ${\mathbb P}^y(E_u) \le {\mathbb P}^y(E)$. Since $E$ is clearly a tail event,  this completes the proof.
\qed

\section{Appendix}\label{s:A}
In this section, we 
follow the setting in Section \ref{s:intro} and
compare the conditions in this paper with those in \cite{CKL}. We  recall the conditions ${\rm Tail}$  and ${\rm NDL}$, and upper and lower scaling properties for nonnegative functions which were presented in \cite[Definitions 1.5, 1.6 and 1.9]{CKL}. We will give a sufficient condition for NDL too. 

\smallskip

Throughout the appendix, we let $\varphi: (0,\infty) \to (0,\infty)$ be an increasing and continuous function such that $\displaystyle \lim_{r \to 0} \varphi(r) = 0$ and  $\displaystyle \lim_{r \to \infty} \varphi(r) = \infty$.

\begin{definition}\label{d:0}
	{\rm Let  $R_0 \in (0,\infty]$ be a constant and $U \subset M$ be an open set.
		
		\smallskip
		
		\noindent	(i)  We say that  ${\mathrm {Tail}}_{R_0}(\varphi, U)$ holds if there exist  constants $C_0\in (0,1)$, $c_J>1$  such that for all $x \in U$ and  $0<r<R_0 \wedge (C_0\updelta_U(x))$,
		\begin{equation}\label{e:Tail_0}
			\frac{c_J^{-1}}{\varphi(r)} \le J(x,M_\partial \setminus B(x,r)) \le \frac{c_J}{\varphi(r)}.
		\end{equation}
		We say that ${\mathrm {Tail}}_{R_0}(\varphi, U, \le)$  (resp.  ${\mathrm {Tail}}_{R_0}(\varphi, U, \ge)$) holds (with $C_0$) if the upper bound (resp. lower bound) in \eqref{e:Tail_0} holds for all $x \in U$ and $0<r<R_0 \wedge (C_0\updelta_U(x))$.
		
		\smallskip
		
		\noindent (ii) We say that ${\mathrm {E}}_{R_0}(\varphi, U)$ holds  if there exist constants $C_0 \in (0,1)$, $C_1>0$ and $c_E > 1$ such that for all $x \in U$ and $0<r<R_0 \land (C_0\updelta_U(x))$,	\begin{equation}\label{e:Eo}		c_E^{-1}\varphi(C_1 r) \le \E^x[\tau_{B(x,r)}]  \le c_E \varphi(C_1 r).	\end{equation}

		\noindent (iii)  We say that ${\mathrm {NDL}}_{R_0}(\varphi, U)$ holds if there exist constants $C_2, \eta \in (0,1)$ and  $c_{l}>0$ such that for all $x \in U$ and $0<r<R_0 \land (C_2\updelta_U(x))$, the heat kernel $p^{B(x,r)}(t,y,z)$ of $X^{B(x,r)}$ exists and 
		\begin{equation}\label{e:NDL_inf}
			p^{B(x,r)}(\varphi(\eta r),y,z) \ge \frac{c_{l}}{V(x, r)}, \quad \quad y,z \in B(x, \eta^2r).
		\end{equation} 
	}
\end{definition}

\begin{definition}\label{d:inf}
	{\rm Let $R_\infty \ge 1$ and $\ups \in(0,1)$ be constants.
		
		\smallskip
		
		\noindent  (i)  We say that  \Taili \ holds if there exists a constant $c_J>1$ such that \eqref{e:Tail_0} holds for all $x \in M$ and $r >R_\infty \ep(x)^\ups$. We say that \Tailil \ (resp. \Tailig) holds if the upper bound (resp. lower bound) in \eqref{e:Tail_0} holds for all $x \in M$ and $r>R_\infty \ep(x)^\ups$.
		
		\smallskip

		\noindent (ii)  We say that \Ei \ holds if there exist constants $\ups \in (0,1)$, $C_1 >0$ and  $c_E > 1$ such that \eqref{e:Eo} holds for all  $x \in M$ and $r>R_\infty \ep(x)^\ups$.
		
		\smallskip
		
		\noindent (iii)  We say that \NDLi  \ holds if there exist
		constants $\eta \in (0,1)$ and $c_l>0$ such that for all $x \in M$ and  $r >R_\infty \ep(x)^\ups $,  the heat kernel $p^{B(x,r)}(t,y,z)$ of $X^{B(x,r)}$ exists and satisfies \eqref{e:NDL_inf}.}
\end{definition}

\begin{defn}\label{d:ws}
	{\rm For $g:(0,\infty) \to (0,\infty)$ and constants $a \in (0, \infty]$,  $\beta_1, \beta_2>0$, $ c_1,c_2>0$, we say that ${\mathrm{L}}_a(g,\beta_1, c_1)$ (resp. ${\mathrm{L}}^a(g,\beta_1, c_1)$) holds if
		$$ \frac{g(r)}{g(s)} \geq c_1 \Big(\frac{r}{s}\Big)^{\beta_1} \quad \text{for all} \quad s\leq r< a\;\;(\text{resp.}\,\;a < s\leq r).$$
		and	we say that $\U_a(g,\beta_2, c_2)$ (resp. $\U^a(g,\beta_2, c_2)$) holds if
		$$ \frac{g(r)}{g(s)} \leq c_2 \Big(\frac{r}{s}\Big)^{\beta_2} \quad \text{for all} \quad s\leq r< a\;\;(\text{resp.}\;a < s\leq r).$$
		We say that  ${\mathrm{L}}(g,\beta_1,c_1)$ holds if  ${\mathrm{L}}_\infty(g,\beta_1,c_1)$ holds, and that $\U(g,\beta_2,c_2)$ holds if  $\U_\infty(g,\beta_2,c_2)$ holds.
}\end{defn}

We now show that the assumptions in this papers are weaker than those in \cite{CKL}.

\begin{lem}\label{l:NDL}
	(i) Suppose that  ${\rm VRD}_{R_0}(U)$, ${\rm Tail}_{R_0}(\varphi,U,\le)$, ${\rm U}_{R_0}(\varphi,\beta_2,C_U)$ and ${\rm NDL}_{R_0}(\varphi,U)$ hold. Then the function $\phi(x,r):=\varphi(r)$ satisfies \eqref{e:phi} for all $x \in U$ and $0<r<r_0 \wedge (C_0'\updelta_U(x))$ with some constants $r_0>0$ and $C_0'\in(0,1)$, and conditions	 \eqref{A1}, \eqref{A2}, \eqref{A3} and \eqref{A4+} hold for $U$.

\noindent	(ii) Suppose that ${\rm VRD}^{R_\infty}(\ups)$, ${\rm Tail}^{R_\infty}(\varphi,\ups, \le)$, ${\rm U}^{R_\infty}(\varphi,\beta_2,C_U)$, ${\rm L}^{R_\infty}(\varphi,\beta_1,C_L)$ and ${\rm NDL}^{R_\infty}(\varphi,\ups)$ hold. Then the function $\phi(x,r):=\varphi(r)$ satisfies \eqref{e:phi} for all $x \in M$ and $r>r_1 {\mathsf{d}}(x)^\ups$ with some constant $r_1\ge1$, and conditions \eqref{B1}, \eqref{B2}, \eqref{B3} and \eqref{B4+} hold.

\end{lem}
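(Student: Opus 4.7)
The plan is to reduce both parts of the lemma to establishing the two-sided comparison $\E^x[\tau_{B(x,r)}]\asymp \varphi(r)$ on the admissible range of $(x,r)$ (that is, condition $\mathrm{E}_{R_0}(\varphi,U)$ in part (i) and its counterpart $\mathrm{E}^{R_\infty}(\varphi,\ups)$ in part (ii)). Once this two-sided comparison is in hand, \eqref{e:phi} holds immediately with $\phi(x,r)=\varphi(r)$, and the remaining four conditions follow by short routine arguments. The proofs of (i) and (ii) are structurally identical; only the admissible range of $(x,r)$ changes.

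For the \emph{lower} bound $\E^x[\tau_{B(x,r)}]\ge c\,\varphi(r)$, I would argue directly from NDL combined with VRD. The heat-kernel lower bound \eqref{e:NDL_inf} gives
\[
{\mathbb P}^x\bigl(\tau_{B(x,r)} > \varphi(\eta r)\bigr) \ \ge\ \int_{B(x,\eta^2 r)} p^{B(x,r)}(\varphi(\eta r),x,y)\,\mu(dy) \ \ge\ \frac{c_l\,V(x,\eta^2 r)}{V(x,r)},
\]
and VRD forces the right-hand side to be uniformly positive; Markov's inequality together with the upper scaling of $\varphi$ then yields the claim. For the \emph{upper} bound $\E^x[\tau_{B(x,r)}] \le C\varphi(r)$, the Tail upper hypothesis enters: the L\'evy system identity together with $J(y,M_\partial\setminus B(x,r))\lesssim 1/\varphi(r)$ for $y\in B(x,r/2)$ limits how long the process can stay inside $B(x,r)$. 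A convenient way to formalize this is to invoke the auxiliary Lemma~\ref{l:hkendl} from the Appendix, whose Meyer-construction-type argument combines NDL, Tail upper, VRD, and $\mathrm{U}$ to produce the matching upper bound.

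With $\E^x[\tau_{B(x,r)}]\asymp \varphi(r)$ in hand, the remaining conditions follow easily. Conditions \eqref{A1} and \eqref{B1} are immediate because $\varphi$ is independent of the base point, provided one adjusts $C_0'$ in part (i) (e.g.\ $C_0'<C_0/(1+C_0)$) so that every $y\in B(x,r)$ still satisfies $r<C_0\updelta_U(y)$; in part (ii) the comparison with the base point $o$ is automatic since ${\mathsf{d}}(x)\ge 1$ implies $r>R_\infty\ge R_\infty{\mathsf{d}}(o)^\ups$. The doubling in \eqref{A2} and the scaling in \eqref{B2} follow from $\mathrm{U}$ (and $\mathrm{L}^{R_\infty}$ for the lower inequality in \eqref{B2}), while $\lim_{r\to 0}\varphi(r)=0$ gives the first half of \eqref{A2}. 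Conditions \eqref{A3} and \eqref{B3} are read off directly from Tail upper together with the comparison. Finally, \eqref{A4+} and \eqref{B4+} are just the NDL statement rewritten with $\phi(x,\eta r)=\varphi(\eta r)$. The principal technical obstacle is the upper bound on the mean exit time, which genuinely requires combining NDL with Tail upper (via VRD and $\mathrm{U}$); neither hypothesis alone suffices, and this is precisely what Lemma~\ref{l:hkendl} is designed to package.
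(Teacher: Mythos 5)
Your overall strategy — reduce to the two-sided comparison $\E^x[\tau_{B(x,r)}]\asymp\varphi(r)$, then read off the remaining conditions — matches the shape of the paper's argument, which simply cites \cite[Proposition~4.3]{CKL} for that comparison together with the exponential exit-time estimates, and then notes that \eqref{A1}--\eqref{A3} follow from the comparison, ${\rm U}_{R_0}$, and $\lim_{r\to 0}\varphi(r)=0$, while \eqref{A4+} is literally ${\rm NDL}_{R_0}(\varphi,U)$ restated. Your lower-bound step is correct: integrating the NDL lower bound over $B(x,\eta^2r)$, using VRD to keep the volume ratio bounded below, and then using the inequality $\E^x[\tau]\ge t\,{\mathbb P}^x(\tau>t)$ at $t=\varphi(\eta r)$ together with ${\rm U}$-scaling is exactly right.

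The upper bound is where your proposal has a genuine gap. First, Lemma~\ref{l:hkendl} is not the tool you want: its hypotheses are two-sided heat kernel estimates for the \emph{whole-space} heat kernel $p(t,x,y)$, and its conclusion is that ${\rm NDL}$ holds. Its proof uses the Dirichlet--heat-kernel decomposition \eqref{e:dhk} and the strong Markov property, not a Meyer construction, and it produces no bound on $\E^x[\tau_{B(x,r)}]$ whatsoever. Second, the mechanism you sketch is backwards: the \emph{upper} tail hypothesis $J(y,M_\partial\setminus B(x,r))\lesssim 1/\varphi(r)$ constrains the rate at which the process jumps \emph{out} of the ball from above, which (if anything) makes exit times larger, not smaller. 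It cannot by itself force $\E^x[\tau_{B(x,r)}]\lesssim\varphi(r)$. In the paper, Tail~$\le$ is used only to convert $\E^x[\tau]\asymp\varphi$ into condition \eqref{A3}; the upper bound on the mean exit time is the genuinely nontrivial part of \cite[Proposition~4.3]{CKL}, which the paper is content to quote rather than reprove. Your proposal therefore substitutes a non-argument for the hardest step. To fix it, you would either need to cite \cite[Proposition~4.3]{CKL} directly (as the paper does) or supply a correct derivation of the exit-time upper bound from NDL and VRD (e.g.\ via a Faber--Krahn/Nash-type inequality and the resulting Dirichlet spectral-gap bound), which is a substantially longer argument than what you have written.
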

\begin{proof} (i) Under the setting, by \cite[Proposition 4.3(i)]{CKL} and ${\rm U}_{R_0}(\varphi,\beta_2,C_U)$, there exist constants  $r_0\in(0,R_0)$, $C_0'\in(0,1)$ and $c_1>1$ such that $\E^x[\tau_{B(x,r)}] \asymp \varphi(r)$ for $x \in U$ and $0<r < r_0 \land C_0' \updelta_U(x)$. Hence, using  ${\rm U}_{R_0}(\varphi,\beta_2,C_U)$ and the fact that $\lim_{r\to 0}\varphi(r) =0$, we see that \eqref{A1}-\eqref{A3} hold for $U$. Now \eqref{A4+} immediately follows from  ${\rm NDL}_{R_0}(\varphi,U)$.

\noindent (ii) Similarly, using \cite[Proposition 4.3(ii)]{CKL}, one can deduce the desired results. \end{proof}

Recall the notion of the heat kernel from Section \ref{s:intro}. In the next lemma, we let 
 $X$ be a strong Markov process on $M$ having the heat kernel $p(t,x,y):=p^M(t,x,y)$ such that $p(t,x,y)<\infty$ unless $x = y$. Then by the strong Markov property of $X$, one can see that for any  open set $D \subset M$, the heat kernel $p^D(t,x,y)$ of $X^D$ 
exists and can be written as
\begin{equation}\label{e:dhk}
	p^D(t,x,y) = p(t,x,y) - \E^x \Big[ \E^{X_{\tau_D}} \big[ p(t-\tau_D,X_{\tau_D},y) ; \tau_D < t \big] \Big]. 
\end{equation}
Using \eqref{e:dhk}, the proof of the next lemma is a simple modification of that of \cite[Proposition 2.3]{CKK09} and \cite[Proposition 2.5]{CKSV}. We give a full proof for the reader's convenience.

\begin{lem}\label{l:hkendl}
Let $U \subset M$ be an open subset.	Suppose that   there exist  constants  $R_0 \in (0,\infty]$, $C,C' \ge 1$  such that  ${\mathrm {VRD}}_{R_0}(U)$  holds, and for all $t \in (0,\varphi(R_0/2))$,
	\begin{equation}\label{e:uhk}
		p(t,x,y) \le  \frac{Ct}{V(y,d(x,y)) \varphi(d(x,y))} \quad \mbox{ for all} \;\; x \in M, \, y \in U \, \mbox{ with } \, d(x,y) > C'\varphi^{-1}(t)
	\end{equation}
	and 
	\begin{equation}\label{e:lhk}
		p(t,x,y) \ge \frac{C^{-1}}{V(x,\varphi^{-1}(t))} \quad \mbox{ for all} \;\; x,y \in U \, \mbox{ with } \, d(x,y) < C'^{-1}\varphi^{-1}( t).
	\end{equation}
	Then ${\mathrm {NDL}}_{R_0}(\varphi, U)$ holds true.
\end{lem}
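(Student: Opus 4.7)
The plan is to start from the strong Markov identity \eqref{e:dhk}, split its right-hand side into a diagonal bulk term and an exit-boundary correction, and bound each piece using $\mathrm{VRD}_{R_0}(U)$ together with \eqref{e:lhk} and \eqref{e:uhk} respectively. Fix $x \in U$ and $r \in (0, R_0 \wedge (C_2 \updelta_U(x)))$ with a constant $C_2 \in (0, C_V)$ chosen small enough that $\mathrm{VRD}_{R_0}(U)$ is applicable at $x$ on all scales up to $r$. Let $\eta \in (0, 1/2)$ be a small parameter to be fixed at the end, take $y, z \in B(x, \eta^2 r) \subset U$, and set $t := \varphi(\eta r)$; since $\eta r < R_0/2$, we have $t < \varphi(R_0/2)$, so both \eqref{e:uhk} and \eqref{e:lhk} are at our disposal. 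With $\tau := \tau_{B(x, r)}$, \eqref{e:dhk} reads
\begin{equation*}
p^{B(x, r)}(t, y, z) = p(t, y, z) - \E^y\big[p(t - \tau, X_\tau, z);\, \tau < t\big].
\end{equation*}

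For the diagonal bulk term I would take $\eta < 1/(2C')$ so that $d(y, z) \le 2\eta^2 r < C'^{-1}\eta r = C'^{-1}\varphi^{-1}(t)$; then \eqref{e:lhk} (valid since $y, z \in U$) yields $p(t, y, z) \ge C^{-1}/V(y, \eta r)$. Bounding $V(y, \eta r) \le V(x, 2\eta r)$ by the trivial ball containment $B(y, \eta r) \subset B(x, 2\eta r)$ and invoking $\mathrm{VRD}_{R_0}(U)$ at $x$ to get $V(x, 2\eta r) \le c_1 \eta^{d_1} V(x, r)$, I would arrive at
\begin{equation*}
p(t, y, z) \ge \frac{c_2 \eta^{-d_1}}{V(x, r)}.
\end{equation*}
The key observation here is that this diagonal bound \emph{grows} like $\eta^{-d_1}$ as $\eta \to 0$.

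For the boundary correction, on $\{\tau < t\}$ the exit point $X_\tau$ is either $\partial$ (contributing $0$ by our convention) or a point $w \in M \setminus B(x, r)$, in which case $d(w, z) \ge (1-\eta^2) r$. Taking $\eta$ also small enough that $(1 - \eta^2) > C'\eta$, so that $d(w, z) > C'\varphi^{-1}(t-\tau)$, the hypothesis \eqref{e:uhk} applied with $z \in U$ as its second argument gives
\begin{equation*}
p(t - \tau, X_\tau, z) \le \frac{Ct}{V(z, (1-\eta^2) r)\,\varphi((1-\eta^2) r)}.
\end{equation*}
One more application of $\mathrm{VRD}_{R_0}(U)$ at $x$ (via $B(x,(1-2\eta^2) r) \subset B(z,(1-\eta^2) r)$) produces $V(z,(1-\eta^2) r) \ge c_3 V(x, r)$; and since $\varphi$ is increasing and $1 - \eta^2 \ge \eta$, the ratio $t/\varphi((1-\eta^2) r) = \varphi(\eta r)/\varphi((1-\eta^2) r) \le 1$. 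Combining,
\begin{equation*}
\E^y\big[p(t - \tau, X_\tau, z);\, \tau < t\big] \le \frac{c_4}{V(x, r)},
\end{equation*}
with $c_4$ \emph{independent of $\eta$}.

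Subtracting, $p^{B(x, r)}(\varphi(\eta r), y, z) \ge (c_2 \eta^{-d_1} - c_4)/V(x, r)$, and fixing $\eta \in (0, 1/2)$ small enough (subject also to $\eta < 1/(2C')$ and $(1 - \eta^2) > C'\eta$) so that $c_2 \eta^{-d_1} \ge 2 c_4$ will close the argument with $c_l := c_2/(2\eta^{d_1})$. The one delicate point is that $\varphi$ is assumed only to be increasing and continuous with no scaling axiom available; the argument succeeds anyway because the two terms have \emph{asymmetric} $\eta$-dependence -- the diagonal bound blows up like $\eta^{-d_1}$ from the volume comparison, while the correction term stays bounded in $\eta$ because the a priori ratio $\varphi(\eta r)/\varphi((1 - \eta^2) r) \le 1$ automatically absorbs the factor $t$ in the numerator. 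The rest is routine bookkeeping to pin down $C_2$ so that $\mathrm{VRD}_{R_0}(U)$ is applicable at all the scales and points appearing above.
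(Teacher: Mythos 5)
Your proposal is correct and follows essentially the same route as the paper's proof: apply the exit-time decomposition \eqref{e:dhk}, bound the diagonal term from below by \eqref{e:lhk} plus volume doubling, bound the boundary term from above by \eqref{e:uhk} plus volume doubling (using that $\varphi(\eta r)/\varphi(\text{exit distance}) \le 1$ by monotonicity alone), and then choose $\eta$ small. The only cosmetic difference is that the paper pins down $\eta$ explicitly at the outset by the formula $\eta=(2C')^{-1}(2^{d_2+1}C^2C_\mu/c_\mu)^{-1/d_1}$ so that the diagonal bound is exactly twice the boundary bound, whereas you leave $\eta$ as a parameter and close the argument with the $\eta^{-d_1}$ blow-up at the end.
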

\begin{proof} Set  $\eta:=(2C')^{-1}( 2^{d_2+1} C^2 C_\mu /c_\mu)^{-1/d_1} \in (0,1/2)$ where $d_1, d_2, c_\mu, C_\mu$ are the constants from \eqref{e:VRD}. Choose any $x \in U$, $0<r<R_0 \land (C_V \updelta_U(x))$ and $y,z \in B(x,\eta^2r)$.

We observe that  $B(x, \eta^2r)  \subset B(x, \updelta_U(x)) \subset  U$ and $d(y,z) \le 2\eta^2 r <C'^{-1} \eta r$.  Thus, by \eqref{e:lhk} and  ${\mathrm {VRD}}_{R_0}(U)$, since $\eta<1/2$, it holds that 
\begin{equation}\label{e:hkendl2}
	p(\varphi(\eta r),y,z) \ge \frac{C^{-1}}{V(y,\eta r)} \ge \frac{C^{-1}}{V(x,\eta r + d(x,y))} \ge \frac{C^{-1}}{V(x,2\eta r )} \ge \frac{C^{-1} c_\mu (2\eta)^{-d_1}}{V(x,r)} \ge \frac{2^{d_2+1}C C_\mu }{V(x,r)}.
\end{equation}
On the other hand,  for every $w \in M \setminus B(x,r)$, we see that $d(w,z) \ge d(w,x) - d(x,z) \ge 3r/4  >C' \eta r$.
Therefore,  for every $0<s\le \varphi(\eta r)$ and $w \in M \setminus B(x,r)$, since $\varphi$ is increasing and $\eta<1/2$, we get from \eqref{e:uhk} and $\mathrm{VRD}_{R_0}(U)$ that 
\begin{align}\label{e:hkendl1}
	p(s, w,z) &\le \frac{C \varphi(\eta r)}{V(z,d(w,z))\varphi(d(w,z))}  \le \frac{C \varphi(\eta r)}{V(z,3r/4)\varphi(3r/4)} \le \frac{C}{V(z,3r/4)}   \nn\\[2pt]
	&\le  \frac{C}{V(x,3r/4-d(x,z))} \le  \frac{C}{V(x,r/2)} \le \frac{2^{d_2}CC_\mu }{V(x,r)}.
\end{align}
Therefore, since $X_{\tau_{B(x,r)}} \in M_\partial \setminus B(x,r)$, using the formula \eqref{e:dhk}, we conclude from  \eqref{e:hkendl2} and \eqref{e:hkendl1} that $p^{B(x,r)}(\varphi(\eta r),y,z) \ge 2^{d_2}CC_\mu/V(x,r)$. The proof is complete.  \end{proof}

\small

\end{document}